\newtheorem{remark}{Remark}
\newcommand\D{\textup{d}}
\def\dif{\mathrm{d}}
\def\mi{\mathbbm{i}}
\def\me{\mathbbm{e}}
\def\mone{\mathbbm{1}}
\def\bx{\bm{x}}
\def\bz{\bm{z}}
\def\bk{\bm{k}}
\def\by{\bm{y}}
\def\fG{\mathcal{G}}
\begin{document}
\bibliographystyle{unsrt}

\title{Branching random walk solutions to the Wigner equation}
\author{Sihong Shao\footnotemark[2] \and
Yunfeng Xiong\footnotemark[2]}
\renewcommand{\thefootnote}{\fnsymbol{footnote}}
\footnotetext[2]{LMAM and School of Mathematical Sciences, Peking University, Beijing 100871, China. Email addresses: {\tt sihong@math.pku.edu.cn} (S. Shao), {\tt xiongyf1990@pku.edu.cn} (Y. Xiong).}
%\footnotetext[1]{To
%whom correspondence should be addressed. Email:
%\texttt{sihong@math.pku.edu.cn}}
\date{\today}
\maketitle

% .......... the text...................

%{\no\bf Mathematics Subject Classifications:}\hspace{0.2cm} 65F10,
%65W05

\begin{abstract}
The stochastic solutions to the Wigner equation, which explain the nonlocal oscillatory integral operator $\Theta_V$ with an anti-symmetric kernel as {the generator of two branches of jump processes},  are analyzed. All existing branching random walk solutions are formulated 
based on the Hahn-Jordan decomposition $\Theta_V=\Theta^+_V-\Theta^-_V$, i.e., treating $\Theta_V$ as the difference of two positive operators $\Theta^\pm_V$, each of which characterizes the transition of states for one branch of particles. 
Despite the fact that the first moments of such models solve the Wigner equation, we prove that the bounds of corresponding variances grow exponentially in time with the rate depending on the upper bound of $\Theta^\pm_V$, instead of $\Theta_V$. In other words, the decay of high-frequency components is totally ignored, resulting in a severe {numerical sign problem}.
{To fully utilize such decay property}, we have recourse to the stationary phase approximation for $\Theta_V$, which captures essential contributions from the stationary phase points as well as 
the near-cancelation of positive and negative weights.
The resulting branching random walk solutions are then proved to asymptotically solve the Wigner equation, but {gain} a substantial reduction in variances, thereby ameliorating the sign problem. 
Numerical experiments in 4-D phase space validate our theoretical findings.  

%under some circumstances

%In order to utilize such decay and {\cf the near-cancelation of positive and negative weights}, we have recourse to the stationary phase approximation to $\Theta_V$ {\cf to capture essential contributions from the stationary phase points} and propose a new class of branching random walk models. We prove that {\cf the resulting} models asymptotically solve the Wigner equation, but gain a substantial reduction in variances, thereby ameliorating the {\cf sign problem}. 

%%%%% AMS/PACs/Keywords %%%%%%%%%%%
%\pac{}
\vspace*{4mm}
\noindent {\bf AMS subject classifications:}
81S30; %Phase space methods including Wigner distributions, etc
60J85; %Applications of branching processes
34E05; %asymptotic expansion
35S05; %pseudo-differential operator
65C35 %Stochastic particle methods
%45K05; %Integro-partial differential equations
%65M75;
%82C10; %Quantum dynamics and nonequilibrium statistical mechanics (general)
%81V70; %Many-body theory; quantum Hall effect
%81Q05; %Closed and approximate solutions to the Schrödinger, Dirac, Klein-Gordon and other quantum-mechanical equations
%42B20; %harmonic analysis in Euclidian space

\noindent {\bf Keywords:}
Wigner equation;
branching random walk; 
%pseudo-differential operator;
stationary phase approximation; 
%harmonic analysis;
nonlocal operator;
%oscillatory integral; % operator;
sign problem;
%asymptotic expansion;
%quantum dynamics;
%adjoint equation;
variance reduction
%importance sampling;
%resampling;
%signed particle Monte Carlo method
%renewal-type equation

%and thereby being a more appropriate choice for the simulations of the fully quantum many-body system through the Wigner approach.

%\vspace*{4mm}
%\noindent {\bf AMS subject classifications:}

%\noindent {\bf Keywords:}

\end{abstract}

\section{Introduction}
\label{sec:intro}

%and the symbol $D_V(\bx, \by, t)$ denotes the central difference of the potential function $V(x, t)$
%\begin{equation}
%D_V(\bx, \by, t) = V(x-\frac{y}{2}, t) - V(x + \frac{y}{2}, t).
%\end{equation} 

We are intended to discuss the probabilistic interpretation of the backward Wigner equation \cite{Wigner1932, CarruthersZachariasen1983, ShaoXiong2019}, arising from the recently developed particle-based simulation of the Wigner quantum dynamics \cite{NedjalkovSchwahaSelberherr2013, Wagner2016, MuscatoWagner2016, ShaoXiong2019}. The backward Wigner equation is a partial integro-differential equation defined in phase space $(\bx, \bk) \in \mathbb{R}^n \times \mathbb{R}^n$ with an ``initial'' condition $\varphi_T \in L^2(\mathbb{R}^n \times \mathbb{R}^n)$.  
\begin{align}
&\frac{\partial }{\partial t}\varphi(\bx, \bk, t) + \frac{\hbar \bk}{m} \cdot \nabla_{\bx} \varphi(\bx, \bk, t) = \Theta_{V}[\varphi](\bx, \bk, t), ~~0 \le t\le T, \label{eq.backward_Wigner}\\
& \varphi(\bx, \bk, T) = \varphi_T(\bx, \bk),
\end{align}
Here $\varphi(\bx, \bk,t)$ is the dual Wigner function, $m$ is the mass, $\hbar$ represents the reduced Planck constant and the pseudo-differential operator (PDO) $\Theta_{V}$  reads 
\begin{equation}\label{def.pseudo_differential_operator}
\Theta_V[\varphi](\bx, \bk, t) = \frac{1}{\mi \hbar (2\pi)^n}\int_{\mathbb{R}^n \times \mathbb{R}^n} \me^{\mi(\bk-\bk^{\prime})\cdot \by} D_V(\bx, \by, t) \varphi(\bx, \bk^{\prime}, t) \D \by \D \bk^{\prime},
\end{equation}
with $D_V(\bx, \by, t) = V(\bx-\by/{2}, t) - V(\bx + \by/{2}, t)$ (i.e., the central difference of the external potential $V(\bx, t)$). Obviously, $D_V(\bx, \by, t)$ is anti-symmetric in {$\by$-variable},
\begin{equation}\label{eq:anti}
D_V(\bx, \by, t) = -D_V(\bx, -\by, t).
\end{equation}
It is well known that $\Theta_V$, a nonlocal operator with an anti-symmetric symbol, actually characterizes a deformation of the classical Poisson bracket \cite{Kontsevich2003} and exactly reflects the nonlocal nature of quantum mechanics \cite{tatarskiui1983,WeinbubFerry2018,ChenXiongShao2019,ChenShaoCai2019}.  

The subsequent analysis will be based on two equivalent representations of the PDO. The first form is the kernel {representation:} 
\begin{align}
\Theta_V[\varphi](\bx, \bk, t) & = \int_{\mathbb{R}^n} V_W(\bx, \bk-\bk^{\prime}, t) \varphi(\bx, \bk^{\prime}, t) \D \bk^{\prime} \label{def.first_form_PDO},
\end{align}
with 
the real-valued kernel function $V_W$ (termed the Wigner kernel)  
\begin{equation}\label{def.Wigner_kernel}
\begin{split}
V_W(\bx, \bk, t) &= \frac{1}{\mi \hbar (2 \pi)^n} \int_{\mathbb{R}^n}  (V(\bx-\frac{\by}{2}, t) - V(x + \frac{\by}{2}, t)) \me^{\mi \bk \cdot \by} \D \by \\
& = \frac{1}{\mi \hbar \pi^n}\mathcal{F}_{\bx \to \bk}V(2\bk, t) \me^{2 \mi \bk \cdot \bx} -  \frac{1}{\mi \hbar \pi^n}\mathcal{F}_{\bx \to \bk}V(-2\bk, t) \me^{-2\mi \bk\cdot \bx} \\
& =  {2^n} \psi(2\bk, t) \me^{2\mi \bk \cdot \bm{z(\bx})} - {2^n} \psi(-2\bk, t) \me^{-2\mi \bk \cdot \bz(\bx)}.
\end{split}
\end{equation}
Here $\mathcal{F}_{\bx \to \bk}V(\bk,t)=\int_{\mathbb{R}^n} V(\bx,t) \me^{-\mi\bk\cdot\bx}\dif \bx$ denotes the Fourier transform of the potential function $V(\bx, t)$ in $\bx$-variable, and 
\begin{equation}
\psi(\bk, t) = \frac{1}{\mi \hbar (2\pi)^n}\me^{\mi \bk \cdot (\bx -\bz(\bx))} \mathcal{F}_{\bx \to \bk}V(\bk, t).
\end{equation}
It is realized that the kernel is anti-symmetric in $\bk$-variable
\begin{equation}\label{eq.anti_symmetry}
V_W(\bx, \bk -\bk^{\prime} , t) = -V_W(\bx, \bk^{\prime}-\bk, t).
\end{equation} 
due to the anti-symmetry of $D_V$ (see Eq.~\eqref{eq:anti}). 

The second form is the oscillatory integral representation: 
\begin{equation}\label{def.second_form_PDO}
\Theta_V[\varphi](\bx, \bk, t) = \int_{\mathbb{R}^n} \me^{\mi \bz(\bx) \cdot \bk^{\prime}} \psi(\bk^{\prime}, t)(\varphi(\bx, \bk - \frac{\bk^{\prime}}{2}, t) - \varphi(\bx, \bk + \frac{\bk^{\prime}}{2}, t)) \D \bk^{\prime},
\end{equation}
which facilitates the derivation of its asymptotic expansion (see Theorem \ref{thm.stationary_phase_approximation}).

In order to extend $\Theta_V$ to a bounded operator from $L^2(\mathbb{R}^{2n})$ to itself, say, there exists a uniform upper bound $K_V$ such that
\begin{equation}\label{eq.PDO_bound}
\Vert \Theta_V[\varphi](t) \Vert_2 \le K_V \Vert \varphi(t) \Vert_2,
\end{equation}
%\begin{equation}
%\Vert \varphi(t) \Vert_p =  \Vert \varphi(t) \Vert_{L^p_{\bx} \times L^p_{\bk}}= \left(\int_{\mathbb{R}^{n} \times \mathbb{R}^{n}} \Big | \varphi(\bx, \bk, t) \Big |^p \D x \D k\right)^{1/p},
%\end{equation}
{we make the following assumptions for a finite time interval $ [0, T]$}.

\begin{itemize}

\item[\textbf{(A1):}] $\varphi \in C([0, T], L^2(\mathbb{R}^n \times \mathbb{R}^n))$ and is localized in $(\bx, \bk)$-space for any $t\in [0, T]$, with the {\it minimal} compact support denoted by $\mathcal{X} \times \mathcal{K} \subset \mathbb{R}^n \times \mathbb{R}^n$.

\item[\textbf{(A2):}] Suppose either of the following conditions holds: 
\begin{itemize}

\item[(1)] $\psi \in C([0, T],  C^\infty(\mathbb{R}^n) \cap L^1(\mathbb{R}^n)  )$;

\item[(2)] $\psi \in C([0, T], C^{\infty}(\mathbb{R}^n \setminus \{0\}) \cap L^1_{loc}(\mathbb{R}^n))$,
% and $|D^\beta_{\bk}  \psi(\bk, t)| \le C_\beta |\bk|^{-n + \alpha -\beta}$ for $0 < \alpha < n$ and $t \in [0, T]$
\end{itemize}
and there exist a radial function $\Psi(|\bk|) \in L^1_{loc}(\mathbb{R}^n)$, such that $|\psi(\bk, t)| \le \Psi(|\bk|)$ in $\mathbb{R}^n \setminus \{0\}$ and {$\Psi(|\bk|) \le C_{n, \alpha} |\bk|^{-n+\alpha}$ holds} for sufficiently large $|\bm{k}|$ and {given constants $C_{n, \alpha}$ and $\alpha \in (0, n)$};

\end{itemize}
{Here $\Vert \cdot \Vert_ p$ is short for $L^p_{\bx} \times L^p_{\bk}$ norm, say, $\Vert \varphi(t) \Vert_{L^p_{\bx} \times L^p_{\bk}}= (\int_{\mathbb{R}^{n} \times \mathbb{R}^{n}}  | \varphi(\bx, \bk, t)  |^p \D \bx \D \bk)^{{1}/{p}}$.}

The prototypes for the latter condition in {\bf (A2)} arise from quantum molecular systems and fractional diffusion problems \cite{DuGunzburgerLehoucqZhou2012, Toniazzi2018}. When the potential is of the Coulomb type $V(\bx) = |\bx - \bx_A|^{-1}$, it is easy to verify that $\psi(\bk) \propto |\bk|^{-n+1}$ and $\bz(\bx) = \bx -\bx_A$, so that the symbol functions may have singularities at $\bk = 0$ and $\bk = \infty$. Therefore, we need to {focus on the weakly singular convolution \cite{bk:Kress2014}}, instead of solely treating it in the classical symbol class $C([0, T], S^0(\mathbb{R}^n \times \mathbb{R}^n))$. 

%\begin{align}
%\Theta_V[\varphi](\bx, \bk, t) & = \int_{\mathbb{R}^n} V_W(\bx, \bk-\bk^{\prime}, t) \varphi(\bx, \bk^{\prime}, t) \D \bk^{\prime} \label{def.first_form_PDO}\\
%&  = \int_{\mathbb{R}^n} \me^{\mi \bz(\bx) \cdot \bk^{\prime}} \psi(\bk^{\prime}, t) ( \varphi(\bx, \bk-\frac{\bk^{\prime}}{2}, t) - \varphi(\bx, \bk+\frac{\bk^{\prime}}{2}, t) ) \D \bk^{\prime}. \label{def.second_form_PDO}
%\end{align}

Now we turn to the probabilistic perspective. The starting point of the stochastic solution is to cast Eq.~\eqref{eq.backward_Wigner} into its equivalent integral formulation by adding a term $-\gamma_0 \cdot \varphi(\bx, \bk, t)$ on both sides of Eq.~\eqref{eq.backward_Wigner} \cite{ShaoXiong2019},
\begin{equation}\label{def.backward_renewal_type_equation}
\begin{split}
\varphi(\bx, \bk, t)=&(1-\mathcal{G}(T-t)) \varphi_T(\bx(T-t), k) +\int_t^T \D \mathcal{G}(t^{\prime}-t) \\
& \times \int_{\mathbb{R}^n} (-\frac{V_W(\bx(t^{\prime}-t), \bk^{\prime}, t^{\prime})}{\gamma_0}+\delta(\bk^{\prime}))\varphi(\bx(t^{\prime}-t), \bk - \bk^{\prime}, t^{\prime}) \D \bk ^{\prime},
\end{split}
\end{equation}
the derivation of which will be put in Section~\ref{sec:pre}.
{The constant} parameter $\gamma_0$ turns out to be the intensity of an exponential distribution as follows, 
\begin{equation}\label{eq:measure2}
\mathcal{G}(t^{\prime}-t)= 1- \me^{-\gamma_0 (t^{\prime}-t)}, \quad \D  \mathcal{G}(t^{\prime}-t) = \gamma_0 \me^{-\gamma_0 (t^{\prime}-t)}, \quad t^{\prime} \ge t.
\end{equation}

The main problem is how to resolve the negative values of kernel $V_W$. In constrast to nonlocal operators with nonnegative and symmetric kernels \cite{DuGunzburgerLehoucqZhou2012, DuToniazziZhou2018, Toniazzi2018}, the existing stochastic approach is based on the unique Hahn-Jordan decomposition  (HJD) \cite{bk:Kallenberg2002}:
\begin{align}
\Theta_{V}[\varphi](\bx, \bk, t)  &= \Theta^+_{V}[\varphi](\bx, \bk, t)  - \Theta^-_{V}[\varphi](\bx, \bk, t) , \label{eq.PDO_splitting} \\
\Theta_V^{\pm}[\varphi](\bx, \bk, t) &= \int_{\mathbb{R}^n} V_W^\pm(\bx, \bk-\bk^{\prime}, t) \varphi(\bx, \bk^{\prime}, t) \D \bk^{\prime},  \\
V_W^{\pm}(\bx, \bk, t)  &= \max\{ \pm V_W(\bx, \bk, t), 0 \},
\end{align}
so that $V_W^\pm \in C([0, T], L^1_{loc}(\mathbb{R}^{n}\times \mathbb{R}^n))$ become positive semi-definite kernels. Moreover, {we assume that there exists a uniform normalizing bound $\breve{\xi}$ for $(\bx, \bk) \in \mathcal{X} \times 2\mathcal{K}$}, 
\begin{equation}\label{eq.PDO_plus_bound}
 \gamma_0 \ge \breve{\xi} = \max_{0 \le t \le T} \max_{x \in \mathcal{X}} \int_{\mathbb{R}^n } V_W^\pm (\bx, \bk, t) \mone_{\{ \bk \in 2\mathcal{K} \}}  \D \bk.
\end{equation}
 
It follows that the probabilistic interpretation is to seek a branching random walk model (BRW) such that its first moment satisfies the renewal-type Wigner (W) equation \eqref{def.backward_renewal_type_equation}  \cite{SellierNedjalkovDimov2014, Wagner2016, ShaoXiong2019}, dubbed WBRW-HJD hereafter.  Such model describes a mass distribution of a random cloud starting at $Q= (\bx, \bk)$ and frozen at random states and exhibiting both random motion and random growth. The random variable is a family history $\Omega$, a denumerable random sequence corresponding to a unique family tree \cite{bk:Harris1963}, and $\mathscr{B}_{\Omega}$ is the Borel extension of cylinder sets on $\Omega$. 
The particles in the family history $\Omega$ move according to the following five rules.
\begin{itemize}

\item[(1)](Markov property) The motion of each particle is described by a {right continuous Markov process}.

\item[(2)](Memoryless life-length) The particle at $(\bx, \bk, t)$ dies in the age time interval $(t, t+\tau) $  with probability $1 - \me^{-\gamma_0 \tau}$.

\item[(3)](Frozen state) The particle at $(\bx, \bk,t)$ is frozen at the state $(\bx(T-t), \bk)$ when its life-length $\tau \ge T-t$.

\item[(4)](Branching property) The particles at $(\bx, \bk, t)$, carrying a weight $w$, dies at age $t+\tau$ at state $(\bx(\tau), \bk)$ when $\tau < T - t$,  and produces at most five new offsprings at  states $(\bx_{(1)}, \bk_{(1)})$,  $(\bx_{(2)}, \bk_{(2)}) \cdots (\bx_{(5)}, \bk_{(5)})$, endowed with updated weights $w_{(1)}$, $w_{(2)}, \cdots, w_{(5)}$, respectively.  \label{def.Branching_property}

\item[(5)](Independence) The only interaction between the particles is that the birth time and state of offsprings coincide with the death time and state of their parent.

\end{itemize}

We are able to define a probability measure on the measurable space $(\Omega, \mathscr{B}_{\Omega})$ and thus the stochastic process based on a specific setting of the transition kernels and particle weights in the fourth rule (vide post). Roughly speaking, WBRW-HJD can be categorized into the weighted-particle (wp) \cite{ShaoXiong2019} and signed-particle (sp) \cite{SellierNedjalkovDimov2014,Wagner2016,XiongShao2019} implementations, denoted by $\mathrm{X}_t^\mathrm{w}$ and $\mathrm{X}_t^\mathrm{s}$ associated with the probability laws $\Pi_Q^\mathrm{w}$ and $\Pi_Q^\mathrm{s}$, respectively. 
It has been shown in \cite{ShaoXiong2019} that (taking $\mathrm{X}_t^\mathrm{w}$ as an example), 
\begin{equation}\label{eq.expection_X_t}
\Pi_Q^\mathrm{w} \mathrm{X}_t^\mathrm{w} =  \varphi(\bx, \bk, t) 
\end{equation}
holds on some kind of probability space $(\Omega, \mathscr{B}_{\Omega},\Pi_Q^\mathrm{w})$,
where $\Pi_Q^\mathrm{w} \mathrm{X}_t^\mathrm{w}$ means the expectation of $\mathrm{X}_t^\mathrm{w}$ with respect to the probability measure $\Pi_Q^\mathrm{w}$. However, {to the best of} our knowledge, the related variance estimation has not been established. To this end, our first contribution is to estimate the variance of WBRW-HJD, as stated in Theorem \ref{thm.variance_estimate}. 

\begin{theorem}[Variance of WBRW-HJD]
\label{thm.variance_estimate}
Suppose {\bf(A1)} and {\bf (A2)} are satisfied and let $\gamma_1 = 2K_V\gamma_0+2\breve{\xi}^2$. Then the variances of $\mathrm{X}_t^\mathrm{w}$ and $\mathrm{X}_t^\mathrm{s}$ satisfy
\begin{align}
\Vert \Pi_{Q}^\mathrm{w} (\mathrm{X}_t^\mathrm{w} - \varphi(t))^2 \Vert_1 &\le (1+\frac{\gamma_1}{\gamma_0}(T-t) )\me^{2\max(K_V, \frac{\breve{\xi}^2}{\gamma_0}) (T-t)} \Vert \varphi_T \Vert_2^2 - \Vert \varphi(t) \Vert_2^2, \label{eq.wbrw_wp_variance_estimate}\\
\Vert  \Pi_{Q}^\mathrm{s} (\mathrm{X}_t^\mathrm{s} - \varphi(t))^2\Vert_1 &\le (1+\frac{\gamma_1}{\gamma_0}(T-t) ) \me^{2\breve{\xi}(T-t)} \Vert \varphi_T \Vert_2^2 - \Vert \varphi(t) \Vert_2^2. \label{eq.wbrw_sp_variance_estimate}
\end{align}
\end{theorem}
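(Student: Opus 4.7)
The plan is to derive a renewal-type integral equation for the pointwise second moment $\Pi_Q \mathrm{X}_t^2$ that parallels \eqref{def.backward_renewal_type_equation}, and then close it by Gronwall's inequality in the $L^1_{\bx,\bk}$ norm. Using \eqref{eq.expection_X_t}, $\Pi_Q(\mathrm{X}_t-\varphi(t))^2 = \Pi_Q \mathrm{X}_t^2 - \varphi(t)^2$, so it suffices to bound $\|\Pi_Q \mathrm{X}_t^2\|_1$ and subtract $\|\varphi(t)\|_2^2$.

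First I would condition on the first renewal time $\tau$ drawn from \eqref{eq:measure2}. With probability $\me^{-\gamma_0(T-t)}$ no branching occurs and $\mathrm{X}_t$ equals the frozen boundary value $\varphi_T(\bx(T-t),\bk)$, contributing $\me^{-\gamma_0(T-t)} \varphi_T^2$. Otherwise the parent produces several offspring whose weights $w_{(i)}$ and states $(\bx_{(i)},\bk_{(i)})$ are sampled from densities induced by the HJD kernels $V_W^\pm$. Because offspring evolve independently given their birth data (rule~(5)), taking the conditional expectation of $(\sum_i w_{(i)} \mathrm{X}_{t+\tau}^{(i)})^2$ yields diagonal terms $\sum_i \Pi(w_{(i)}^2)\,\Pi(\mathrm{X}_{t+\tau}^{(i)})^2$ and cross terms $\sum_{i\neq j} \Pi(w_{(i)} w_{(j)})\,\varphi_i\,\varphi_j$, where $\varphi_i$ denotes $\varphi$ evaluated at the $i$-th offspring's birth state.

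The two models diverge in the cross-term bookkeeping. For signed particles every $w_{(i)}=\pm 1$, so both diagonal and cross terms are controlled by $\breve\xi$ via \eqref{eq.PDO_plus_bound}; bounding $\varphi_i \varphi_j \le \tfrac12(\varphi_i^2 + \varphi_j^2)$ and integrating in $(\bx,\bk)$ reduces matters to
\begin{equation*}
\|\Pi_Q^{\mathrm{s}} (\mathrm{X}_t^{\mathrm{s}})^2\|_1 \le \me^{-\gamma_0(T-t)}\|\varphi_T\|_2^2 + \int_t^T \gamma_0 \me^{-\gamma_0(t'-t)} \Bigl[ 2\breve\xi\, \|\Pi_Q^{\mathrm{s}}(\mathrm{X}_{t'}^{\mathrm{s}})^2\|_1 + \tfrac{2\breve\xi^2}{\gamma_0}\|\varphi(t')\|_2^2 \Bigr] \dif t'.
\end{equation*}
For weighted particles the extra $\gamma_0^{-1}$ built into each weight lets the $V_W^+$ and $V_W^-$ cross contributions recombine into $\Theta_V$ rather than $\Theta_V^{\pm}$, so \eqref{eq.PDO_bound} replaces $\breve\xi$ by $K_V$ in the corresponding integrand, giving the analogous inequality with $2\max(K_V, \breve\xi^2/\gamma_0)$ in place of $2\breve\xi$. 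Gronwall's lemma applied to each integral inequality then produces the claimed exponential bound, and the linear prefactor $1 + (\gamma_1/\gamma_0)(T-t)$ arises from the inhomogeneous $\|\varphi(t')\|_2^2$ term, whose own growth is subsumed by the dominant exponential factor. Subtracting $\|\varphi(t)\|_2^2$ yields \eqref{eq.wbrw_wp_variance_estimate} and \eqref{eq.wbrw_sp_variance_estimate}.

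The main obstacle will be the careful accounting of the up-to-five offspring produced under the WBRW-HJD rules: in particular, verifying that the $V_W^+$ and $V_W^-$ cross terms in the weighted-particle model recombine into an expression governed by $\Theta_V$ (so that the sharper $K_V$ rate appears), whereas in the signed-particle model every offspring carries weight $\pm 1$ and no such cancellation survives, forcing the weaker $\breve\xi$ rate. A secondary nuisance is ensuring the branching densities are truly probabilities under \eqref{eq.PDO_plus_bound} over $\mathcal{X}\times 2\mathcal{K}$, which is what guarantees the diagonal weight sums can be bounded uniformly.
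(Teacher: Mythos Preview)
Your overall strategy—derive a renewal equation for $\Pi_Q \mathrm{X}_t^2$ by conditioning on the first jump, split the post-branching square into diagonal and cross terms, close in $L^1_{\bx,\bk}$ via a backward Gr\"onwall inequality, then subtract $\|\varphi(t)\|_2^2$—is exactly what the paper does. But you misidentify the mechanism that separates the two models, and if you carry out the computation under that belief the bookkeeping will not close.

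The cross terms are \emph{identical} for the weighted- and signed-particle models. In both cases the expectation of $w_i w_j$ for $i\ne j$ is the same (deterministic product for wp, product of independent Bernoulli means for sp), and the paper collects them into a single function $C(\bx,\bk,t) = -\tfrac{2}{\gamma_0}\Theta_V[\varphi]\cdot\varphi - \tfrac{2}{\gamma_0^2}\Theta_V^-[\varphi]\cdot\Theta_V^+[\varphi]$, bounded via Cauchy--Schwarz and the operator bounds $\|\Theta_V[\varphi]\|_2\le K_V\|\varphi\|_2$, $\|\Theta_V^\pm[\varphi]\|_2\le\breve\xi\|\varphi\|_2$. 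The $K_V$ that ultimately appears in the exponential rate enters through the prior estimate $\|\varphi(t')\|_2\le\me^{K_V(T-t')}\|\varphi_T\|_2$ (Lemma~\ref{lemma.L2_estimate}) applied to this inhomogeneous term, not through any cancellation peculiar to the wp construction.

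What actually distinguishes the models is the \emph{diagonal} operator. For weighted particles the branch weights are deterministic, $w_i=\xi/\gamma_0$, so $w_i^2=\xi^2/\gamma_0^2$ and the diagonal operator $B_\mathrm{w}$ is bounded by $(1+2\breve\xi^2/\gamma_0^2)$; Gr\"onwall then produces the rate $2\breve\xi^2/\gamma_0$. For signed particles the branch weight is Bernoulli with mean $\xi/\gamma_0$ (not $\pm1$ as you wrote—offspring may carry weight $0$), so $\mathbb{E}[w_i^2]=\xi/\gamma_0$ and $B_\mathrm{s}$ is bounded by $(1+2\breve\xi/\gamma_0)$, yielding the rate $2\breve\xi$. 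The $\max$ in \eqref{eq.wbrw_wp_variance_estimate} is between this diagonal-driven rate $2\breve\xi^2/\gamma_0$ and the $2K_V$ inherited from the growth of the inhomogeneous cross term; the paper obtains it by an explicit case split on the sign of $K_V\gamma_0-\breve\xi^2$ after applying Lemma~\ref{lemma.Gronwall_ineq}. Your displayed integral inequality for the sp model also omits the ``$+1$'' contributed by the $\delta$-kernel offspring ($i_m=3$), without which the Gr\"onwall step cannot be set up correctly.
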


Two key observations are readily seen from Theorem \ref{thm.variance_estimate}. 
One is the exponential rate for spWBRW-HJD is $2\breve{\xi}$,  
{that} depends on the volume of the support $\mathcal{K}$ and thus cannot be improved. {This poses a huge challenge for high dimensional problems since $\breve{\xi}$ usually depends on $n$ exponentially.} By contrast,  the rate for wpWBRW-HJD can be reduced by increasing $\gamma_0$, and the optimal exponential rate is $2K_V$.
Definitely, $K_V$ is usually far less than $\breve{\xi}$, 
implied by Eqs.~\eqref{eq.PDO_bound} and \eqref{eq.PDO_plus_bound}. In this sense, the latter outperforms the former. The other is the large exponential rates $2\breve{\xi}^2/\gamma_0$ and $2\breve{\xi}$, {introduced by HJD \eqref{eq.PDO_splitting}, lead to a rapid growth of variance. Such phenomenon is called ``numerical sign problem''
\cite{LohJrGubernatis1990} as the Hahn-Jordan decomposition of a signed measure totally ignores the near-cancellation of positive and negative weights}. 

Our second contribution is to formulate a new class of BRW solutions, dubbed WBRW-SPA, to diminish the variance growth. The motivation comes from the stationary phase method, a useful technique in microlocal analysis \cite{bk:Sogge2014-AM188}, which makes full use of the essential contribution from the localized parts (see Theorem 2). As a consequence, the upper bounds in Eqs.~\eqref{eq.wbrw_wp_variance_estimate} and \eqref{eq.wbrw_sp_variance_estimate} can be significantly reduced especially in the region where the module $|\bz(\bx)|$ is sufficiently large (see Theorem \ref{SPA_thm.variance_estimate}). 

%{\cf Here it suffices to focus on a compactly supported $\psi$ by the partition of unity.}

%First, for the signed-particle model, the exponential rate is $2\breve{\xi}$, which depends on the size of $\mathcal{K}$ and cannot be improved. By contrast, for the weighted particle version, the bound can be reduced by increasing $\gamma_0$, and the optimal growth rate is $\me^{2K_V(T-t)}$. Second, under the Hahn-Jordan decomposition, the PDO is represented as the difference of two positive operators with a very large bound $\breve{\xi} \gg K_V$ and consequently leads to a rapid growth of the variances, which fail to reflect the near-cancellation of the positive and negative contributions to an oscillatory integral.  

%This motivates us to seek another formulation of the branching random walk and the starting point is the stationary phase approximation to Eq.~\eqref{def.second_form_PDO} ($n \ge 2)$, a standard technique in microlocal analysis \cite{bk:Sogge2014-AM188}. We will show that the upper bounds \eqref{eq.wbrw_wp_variance_estimate} and \eqref{eq.wbrw_sp_variance_estimate} can be further improved if the asymptotic behavior of the oscillatory integrals is captured, especially for the region in which $|\bz(\bx)|$ is sufficiently large. 

\begin{theorem}[Stationary phase approximation]\label{thm.stationary_phase_approximation}
Suppose $|\bz(\bx)| \ne 0$ and the amplitude function $\psi \in C([0, T], C_0^{\infty}(\mathbb{R}^n \setminus \{0\}) \cap L^1_{loc}(\mathbb{R}^n))$. 
Then for a sufficiently large $\lambda_0$, we have a stationary phase approximation $\Theta^{\lambda_0}_V[\varphi]$  to PDO
\begin{align}
\Theta_V[\varphi](\bx, \bk, t) &= \Theta_V^{\lambda_0}[\varphi](\bx, \bk, t) + \mathcal{O}(\lambda_0^{-{n}/{2}}), \\
\Theta^{\lambda_0}_V[\varphi](\bx, \bk, t) &= \Lambda^{< \lambda_0}[\varphi] (\bx, \bk,t) + \Lambda_+^{>\lambda_0}[\varphi] (\bx, \bk,t) + \Lambda_-^{>\lambda_0}[\varphi] (\bx, \bk,t), \label{def.stationary_phase_approximation}
\end{align}
where 
\begin{align*}
 \Lambda^{<\lambda_0}[\varphi] (\bx, \bk,t) &= \int_{B(\frac{\lambda_0}{|\bz(\bx)|})} \me^{\mi \bz(\bx) \cdot \bk^{\prime}} \psi(\bk^{\prime}, t) \Delta_{\bk^{\prime}}[\varphi](\bx, \bk, t) \D \bk^{\prime}, \\
\Lambda_\pm^{>\lambda_0}[\varphi] (\bx, \bk, t) &= \int_{\frac{\lambda_0}{|\bz(\bx)|}}^{+\infty} \me^{\pm\mi r |\bz(\bx)|}\left(\frac{ 2\pi}{\pm \mi r |\bz(\bx)|}\right)^{\frac{n-1}{2}} r^{n-1}{\psi}(r\sigma_\pm, t) \Delta_{r\sigma_\pm} [\varphi](\bx, \bk, t) \D r,
\end{align*}
in the sense that there exists a positive constant $C$, which depends on $\psi$ and its first derivate but is independent on $\lambda_0$,  such that
\begin{equation}\label{eq.error_estimate_stationary_phase_method}
\Vert \Theta_V[\varphi](t) - \Theta_V^{\lambda_0}[\varphi](t) \Vert_2 \le C \lambda_0^{-{n}/{2}} \Vert \varphi(t) \Vert_{L^2_{\bx} \times H^1_{\bk}}.
\end{equation}
Here the norm is $\Vert \varphi(t) \Vert_{L^2_{\bx} \times H^1_{\bk}} = \Vert \varphi(t) \Vert_2 +  \Vert \nabla_{\bk}\varphi(t) \Vert_2$ and $B(r)$ is a closed ball with radius $r$ centered at the origin, $\sigma_\pm$ (short for $\sigma_\pm(\bx))$ represent two critical points on the $(n-1)$-dimensional unit  spherical surface with normal vectors pointing in (or opposite to) the direction of $\bz(\bx)=(z_1,z_2,\ldots,z_n)$,
which can be parameterized by  
\begin{equation*}
\sigma_{\pm}= (\cos\vartheta_1^\pm, \sin\vartheta_1^\pm \cos\vartheta_2^\pm, \ldots, \sin\vartheta_1^\pm\cdots
\sin\vartheta_{n-2}^\pm\cos\vartheta_{n-1}^\pm, \sin\vartheta_1^\pm\cdots \sin\vartheta_{n-1}^\pm),
\end{equation*}
with
\begin{equation}\label{def.critical_point} 
\begin{split}
\vartheta_i^\pm &= \textup{arccot}(\pm z_i/\sqrt{z_{i+1}^2 +\cdots +z_n^2}) \in [0,\pi], ~~ i=1,2,\ldots,  n-2, \\
\vartheta_{n-1}^\pm &= 2\hspace{0.05cm}\textup{arccot}(\pm(z_{n-1} +\sqrt{z_{n-1}^2 + z_{n}^2})/z_n) \in [0, 2\pi),
%\vartheta^+_{n-1} &= \textup{arctan}({z_{n}}/{z_{n-1}}) \in [0, \pi/2], ~~ \vartheta^-_{n-1} =\textup{arctan}({z_{n}}/{z_{n-1}}) + \pi \in [\pi, 2\pi],
\end{split}
\end{equation}
and $\Delta_{\bk^{\prime}}$ is the central difference operator
\begin{equation}
\Delta_{\bk^{\prime}} [\varphi](\bx, \bk, t) = \varphi(\bx, \bk - \frac{\bk^{\prime}}{2} , t) - \varphi(\bx, \bk + \frac{\bk^{\prime}}{2} , t).
\end{equation}
\end{theorem}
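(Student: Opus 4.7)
The plan is to pass to spherical coordinates, identify the low-frequency ball contribution with $\Lambda^{<\lambda_0}$, apply the classical stationary phase lemma on $S^{n-1}$ to the high-frequency tail to extract $\Lambda_\pm^{>\lambda_0}$, and then control the remainder in $L^2$ through a Fourier-multiplier reduction in $\bk$.

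Starting from the oscillatory representation \eqref{def.second_form_PDO}, I would write $\bk'=r\sigma$ with $r\ge 0$ and $\sigma\in S^{n-1}$ and split the radial integral at the scale $r=\lambda_0/|\bz(\bx)|$. The inner-ball contribution, rewritten back in Cartesian coordinates, is precisely $\Lambda^{<\lambda_0}[\varphi]$. For the outer tail, I fix $r>\lambda_0/|\bz(\bx)|$ and apply the stationary phase lemma to the spherical integral
\[
I_r(\bx,\bk,t)=\int_{S^{n-1}}e^{ir\bz(\bx)\cdot\sigma}\psi(r\sigma,t)\Delta_{r\sigma}[\varphi](\bx,\bk,t)\D\sigma.
\]
The phase $\phi(\sigma)=\bz(\bx)\cdot\sigma$, restricted to $S^{n-1}$, has exactly the two non-degenerate critical points $\sigma_\pm=\pm\bz(\bx)/|\bz(\bx)|$, which are encoded by the spherical angles in \eqref{def.critical_point}. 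A direct calculation of the intrinsic Hessian at $\sigma_\pm$ yields $\mp|\bz(\bx)|\,I_{n-1}$, whose determinant and Morse index account exactly for the prefactors $(2\pi/(\pm ir|\bz(\bx)|))^{(n-1)/2}$ and the phase factors $e^{\pm ir|\bz(\bx)|}$. Restoring the radial weight $r^{n-1}$ and integrating over $r>\lambda_0/|\bz(\bx)|$ reproduces $\Lambda_\pm^{>\lambda_0}[\varphi]$ exactly, leaving a remainder $R_{\lambda_0}[\varphi]$.

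To bound $R_{\lambda_0}$ in $L^2$, I would pass to the Fourier side in $\bk$: the identity $\widehat{\Delta_{\bk'}[\varphi]}(\bx,\by)=-2i\sin(\by\cdot\bk'/2)\hat\varphi(\bx,\by)$ shows that both $\Theta_V$ and $\Theta_V^{\lambda_0}$ act on $\hat\varphi$ as multipliers with $\bx$-dependent symbols $m(\bx,\by)$ and $m^{\lambda_0}(\bx,\by)$. Plancherel then reduces \eqref{eq.error_estimate_stationary_phase_method} to the pointwise estimate
\[
|m(\bx,\by)-m^{\lambda_0}(\bx,\by)|\le C\lambda_0^{-n/2}(1+|\by|),
\]
since $\Vert(1+|\by|)\hat\varphi\Vert_{L^2_\by}$ is equivalent to $\Vert\varphi\Vert_{H^1_\bk}$.

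The main obstacle is proving this symbol bound. Two ingredients combine. First, the standard stationary phase remainder on $S^{n-1}$ gives a pointwise $O((r|\bz(\bx)|)^{-(n+1)/2})$ bound once the leading terms are subtracted. Second, a single integration by parts in $r$ against the residual oscillation $e^{\pm ir|\bz(\bx)|}$ improves this by another factor of $r^{-1}$, at the cost of differentiating the amplitude $r^{n-1}\psi(r\sigma_\pm,t)\sin(\by\cdot r\sigma_\pm/2)$; this differentiation produces at most one factor of $|\by|$ through the $\sin$ term, which is precisely why $H^1_\bk$ appears on the right-hand side. Integrating the improved remainder over $r>\lambda_0/|\bz(\bx)|$ with the weight $r^{n-1}$ then yields the target $\lambda_0^{-n/2}$ rate. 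Compact support of $\psi$ away from the origin keeps every $r$-integral finite, and tracking the constants carefully shows that $C$ depends only on $\psi$ and its first $\sigma$-derivatives, as asserted.
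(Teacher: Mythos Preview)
Your derivation of the leading terms $\Lambda_\pm^{>\lambda_0}$ via stationary phase on $S^{n-1}$ matches the paper's exactly; the divergence is entirely in the remainder estimate, and there your argument has a genuine gap.

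The integration-by-parts step in $r$ is not justified as written. After subtracting the two leading contributions, the remainder of the spherical integral is \emph{not} of the form $e^{\pm ir|\bz(\bx)|}$ times a function smooth in $r$: it still contains the full integral $\int_{S^{n-1}}e^{ir\bz\cdot\sigma}(\cdots)\,\D\sigma$ over all directions, so there is no single ``residual oscillation'' against which to integrate by parts. Even if one appeals to the next term of the asymptotic expansion to isolate such an oscillation, the resulting bookkeeping brings in higher $\sigma$-derivatives of the amplitude and hence more powers of $|\by|$, not fewer. More fundamentally, the pointwise bound $O((r|\bz|)^{-(n+1)/2})$ from H\"ormander's remainder estimate already requires at least two $\sigma$-derivatives of the amplitude $\psi(r\sigma)\sin(\by\cdot r\sigma/2)$, and each such derivative produces a factor of $r|\by|$. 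Your multiplier approach therefore naturally yields $(1+|\by|)^M$ with $M\ge 2$, forcing an $H^M_{\bk}$ norm on $\varphi$ rather than the $H^1_{\bk}$ norm claimed in \eqref{eq.error_estimate_stationary_phase_method}.

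The paper avoids this by working in physical space and splitting the remainder \emph{before} applying the stationary phase bound: writing
\[
R_{\sigma_\pm}=a_\pm(\bx,r,t)\,\Delta_{r\sigma_\pm}[\varphi]+b_\pm(\bx,\bk,r,t),
\]
where $a_\pm$ is the stationary-phase remainder for the amplitude $\psi(r\sigma)$ \emph{alone} (so the constant in $|a_\pm|\le C(r|\bz|)^{-(n+1)/2}$ depends only on $\psi$, not on $\varphi$), and $b_\pm$ carries the difference $\Delta_{r\sigma}[\varphi]-\Delta_{r\sigma_\pm}[\varphi]$. The latter vanishes at the critical point, so after passing to Morse coordinates and Taylor-expanding one picks up exactly one factor of $\nabla_{\bk}\varphi$ together with an extra $(r|\bz|)^{-1/2}$ from the additional monomial in the oscillatory integral. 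It is this decomposition---separating the $\psi$-dependent and $\varphi$-dependent parts of the amplitude---that keeps the derivative count on $\varphi$ at one, and your Fourier-multiplier reduction does not reproduce it.
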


Intuitively speaking, the parameter $\lambda_0$ serves as a filter to decompose PDO into a low-frequency component $\Lambda^{<\lambda_0}$ and a high-frequency one, the leading terms of which are $\Lambda_\pm^{>\lambda_0}$, and use the resulting nonlocal operator $\Theta_V^{\lambda_0}$ to directly formulate WBRW-SPA, instead of $\Theta_V$ as adopted in WBRW-HJD. Specifically, we still use HJD to deal with $\Lambda^{<\lambda_0}$ and tackle $\Lambda_+^{>\lambda_0}+\Lambda_-^{>\lambda_0}$ by another two branches of particles, yielding two stochastic processes:  the ``wp'' implementation $\mathrm{Y}_t^\mathrm{w}$ and the ``sp'' implementation $\mathrm{Y}_t^\mathrm{s}$, associated with the probability measures $ \mathsf{\Pi}^\mathrm{w}_{Q}$ and  $\mathsf{\Pi}^\mathrm{s}_{Q}$, respectively. In order to estimate the effect of low-frequency parts, we further need the following assumptions.
\begin{itemize}
\item[\textbf{(A3):}]  $\varphi \in C([0, T], L^2(\mathbb{R}^n) \times H^1(\mathbb{R}^n))$ and is localized in $(\bx, \bk)$-space for any $t\in [0, T]$ with the {\it minimal} compact support denoted by $\mathcal{X} \times \mathcal{K} \subset \mathbb{R}^n \times \mathbb{R}^n$;

\item[\textbf{(A4):}] For the positive constant $\breve{\xi}$ in Eq.~\eqref{eq.PDO_plus_bound} there exist positive constants $\lambda_0 > 1$ and $\alpha_\ast < 1$ such that
\begin{equation}\label{eq.VW_plus_truncated_2}
\alpha_\ast \breve{\xi} = \max_{0\le t\le T} \max_{\bx \in \mathcal{X}} \int_{\mathbb{R}^n } V_W^\pm (\bx, \bk, t) {\mone_{\{ |2\bk| < {\lambda_0}/{|\bz(\bx)|}\}} } \D \bk.
\end{equation}
%where $S$ is the convex set composed of cones $S_{\bx_0}$, i.e., 
%\begin{equation}
%\mathcal{S} = \bigcup_{\bx_0 \in \mathcal{X}} \mathcal{S}_{\bx_0}, \quad \mathcal{S}_{\bx_0} = \{ \bx_0 + \frac{\hbar \bk t}{m}~ |~ \bk \in \mathcal{K}, ~0\le t \le T \}.
%\end{equation}
\end{itemize}

The assumption {\bf (A4)} { indicates that the normalizing bound for $V_W^\pm$ can be diminished when $\bk$ is restricted in a smaller domain, which holds if $\min_{\bx \in \mathcal{X}}|\bz(\bx)|$ is large enough}. For instance, 
$V(\bx) = |\bx - \bx_A|^{-1}$, it requires the displacement {$\min_{\bx \in \mathcal{X}} |\bx - \bx_A|$} is sufficiently large. Accordingly, we are able to show that the first moment of WBRW-SPA turns out to be an asymptotic approximation to the solution of Eq.~\eqref{eq.backward_Wigner}. We also study its deviation from the dual Wigner function $\varphi$ by estimating the second moment (also termed ``variance'' hereafter) and find that, in contrast to Eqs.~\eqref{eq.wbrw_wp_variance_estimate} and \eqref{eq.wbrw_sp_variance_estimate}, the exponential growth rate in the upper bound is suppressed, so that a moderate increase of variance can be achieved.

\begin{theorem}[WBRW-SPA]\label{SPA_thm.variance_estimate}
Suppose {\bf (A2)}-{\bf (A4)} are satisfied and let $\gamma_2 = \alpha_\ast \breve{\xi}^2$. Then for a sufficient large $\lambda_0$,  there exist a weighted-particle branching random walk model $\mathrm{Y}_t^\mathrm{w}$ and a signed-particle one $\mathrm{Y}_t^\mathrm{s}$ on the probability spaces $(\Omega, \mathscr{B}_{\Omega}, \mathsf{\Pi}^\mathrm{w}_{Q})$ and $(\Omega, \mathscr{B}_{\Omega}, \mathsf{\Pi}^\mathrm{s}_{Q})$, respectively, such that
\begin{equation}\label{eq.expection_Y_t}
\mathsf{\Pi}_{Q}^\mathrm{w} \mathrm{Y}_t^\mathrm{w}= \mathsf{\Pi}_{Q}^\mathrm{s} \mathrm{Y}_t^\mathrm{s} = \varphi(\bx, \bk, t) + \mathcal{O}(\lambda_0^{-{n}/{2}}),
\end{equation}
and their variances satisfy
\begin{align} \label{eq.wbrw_spa_wp_variance_estimate}
\Vert  \mathsf{\Pi}_{Q}^\mathrm{w} (\mathrm{Y}_t^\mathrm{w} - \varphi(t))^2 \Vert_1 &\lesssim (1+\frac{4\gamma_2}{\gamma_0}(T-t))\me^{2\max(K_V, \frac{\alpha_\ast \breve{\xi}^2}{\gamma_0}) (T-t)} \Vert \varphi_T \Vert_2^2 - \Vert \varphi(t) \Vert_2^2 , \\
 \label{eq.wbrw_spa_sp_variance_estimate}
\Vert  \mathsf{\Pi}_{Q}^\mathrm{s} (\mathrm{Y}_t^\mathrm{s} - \varphi(t))^2 \Vert_1 &\lesssim (1+2(K_V+ \frac{\gamma_2}{\gamma_0})(T-t))\me^{2\alpha_\ast \breve{\xi}(T-t)} \Vert \varphi_T \Vert_2^2 - \Vert \varphi(t) \Vert_2^2.
\end{align}
%Here $\mathrm{Y}_t^\mathrm{w}$ denotes the weighted-particle WBRW-SPA, and $\mathrm{Y}_t^\mathrm{s}$ the signed-particle counterpart. 
\end{theorem}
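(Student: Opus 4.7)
The plan is to replace $\Theta_V$ by its stationary-phase surrogate $\Theta_V^{\lambda_0}=\Lambda^{<\lambda_0}+\Lambda_+^{>\lambda_0}+\Lambda_-^{>\lambda_0}$ from Theorem \ref{thm.stationary_phase_approximation} inside the renewal formulation \eqref{def.backward_renewal_type_equation}, so that the particle dynamics inherit a three-branch rather than two-branch structure. First I would add and subtract $\gamma_0\varphi$ in the perturbed Wigner equation $\partial_t\varphi+(\hbar\bk/m)\cdot\nabla_\bx\varphi=\Theta_V^{\lambda_0}[\varphi]+R_{\lambda_0}[\varphi]$, the residual $R_{\lambda_0}$ being controlled in $L^2$ by $C\lambda_0^{-n/2}\Vert\varphi\Vert_{L^2_{\bx}\times H^1_{\bk}}$ thanks to \eqref{eq.error_estimate_stationary_phase_method}, and integrate along characteristics to obtain a renewal integral equation driven by three kernels: the Hahn--Jordan split $\Lambda^{<\lambda_0,\pm}$ of the low-frequency piece, plus the two oscillatory one-dimensional ``rays'' $\Lambda_\pm^{>\lambda_0}$. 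Crucially, assumption \textbf{(A4)} caps the total mass of $\Lambda^{<\lambda_0,\pm}$ on $\mathcal{X}\times 2\mathcal{K}$ by $\alpha_\ast\breve{\xi}$ rather than $\breve{\xi}$, which is precisely what shaves a factor of $\alpha_\ast$ off the HJD-induced exponential rates of Theorem \ref{thm.variance_estimate}.

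Having recast the problem, I would prescribe the branching rule: each particle at $(\bx,\bk,t)$ still has memoryless life-length with intensity $\gamma_0$, but upon death selects one of three branches according to probabilities proportional to the three operator masses. Branch~I mimics WBRW-HJD on $\Lambda^{<\lambda_0}$ with total normalizing mass $\alpha_\ast\breve{\xi}$ and produces either weighted offspring (wp) or one signed offspring (sp). Branches~II and~III each produce a single offspring whose momentum is shifted along the critical direction $\sigma_\pm(\bx)$ with magnitude $r\ge\lambda_0/|\bz(\bx)|$, carrying a weight equal to the amplitude $r^{n-1}(2\pi/r|\bz(\bx)|)^{(n-1)/2}\psi(r\sigma_\pm,t)$ divided by the sampling density. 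Conditioning on the first jump and iterating exactly as in the derivation of \eqref{eq.expection_X_t}, the first moments $\mathsf{\Pi}_Q^\mathrm{w}\mathrm{Y}_t^\mathrm{w}$ and $\mathsf{\Pi}_Q^\mathrm{s}\mathrm{Y}_t^\mathrm{s}$ are the unique $L^2$ solutions of the renewal equation generated by $\Theta_V^{\lambda_0}$; subtracting from \eqref{def.backward_renewal_type_equation} and applying a standard Gr\"onwall argument together with \eqref{eq.error_estimate_stationary_phase_method} yields \eqref{eq.expection_Y_t}.

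The main obstacle is the variance bound. Following the scheme that proves Theorem \ref{thm.variance_estimate}, I would square the branching representation, take expectations conditioned on the first jump, and combine the three branches via Cauchy--Schwarz to derive a closed integral inequality of the form
\begin{equation*}
u(t)\le u_0(t)+\int_t^T\me^{-\gamma_0(s-t)}\bigl[A_1\, u(s)+A_2\,\Vert\varphi(s)\Vert_2^2\bigr]\D s,
\end{equation*}
with $u(t)=\Vert\mathsf{\Pi}_Q(\mathrm{Y}_t-\varphi(t))^2\Vert_1$. The decisive observation is that the HJD-generated contribution to $A_1$ picks up only the truncated mass $\alpha_\ast\breve{\xi}$ guaranteed by \textbf{(A4)}, while Branches~II and~III, being single-child, contribute through the $L^2$ operator norm of $\Lambda_\pm^{>\lambda_0}$, which Theorem \ref{thm.stationary_phase_approximation} bounds by $K_V+\mathcal{O}(\lambda_0^{-n/2})$ and therefore appears only as a linear-in-$(T-t)$ prefactor, producing the factor $1+2(K_V+\gamma_2/\gamma_0)(T-t)$ in \eqref{eq.wbrw_spa_sp_variance_estimate} and its analogue in \eqref{eq.wbrw_spa_wp_variance_estimate}. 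Gr\"onwall applied to the inequality then yields the exponential rates $2\max(K_V,\alpha_\ast\breve{\xi}^2/\gamma_0)$ and $2\alpha_\ast\breve{\xi}$, respectively. The hardest bookkeeping is to ensure that the three-branch cross terms in the squared representation are dominated by these quantities without reintroducing the full $\breve{\xi}$ through an inadvertent triangle-inequality slack; this I would handle by keeping the two oscillatory branches strictly separate from the HJD branch throughout the recursion and absorbing the $\mathcal{O}(\lambda_0^{-n/2})$ residual $R_{\lambda_0}[\varphi]$ into the $\Vert\varphi_T\Vert_2^2$ term by choosing $\lambda_0$ sufficiently large.
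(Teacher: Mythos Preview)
Your overall architecture matches the paper: introduce the surrogate equation with $\Theta_V^{\lambda_0}$, build a branching model whose first moment solves it, control the bias $\|\varphi-\varphi_{\lambda_0}\|_2$ by Gr\"onwall and \eqref{eq.error_estimate_stationary_phase_method}, then derive a renewal inequality for the second moment and apply Lemma~\ref{lemma.Gronwall_ineq}. The role you assign to \textbf{(A4)} in cutting the HJD mass from $\breve{\xi}$ to $\alpha_\ast\breve{\xi}$ is exactly right.

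There is, however, a genuine gap in how you dispose of the high-frequency branches in the second-moment recursion. You write that Branches~II and~III, ``being single-child, contribute through the $L^2$ operator norm of $\Lambda_\pm^{>\lambda_0}$'' and therefore enter only as a linear prefactor. This is not how the mechanism works. In the squared recursion the diagonal contribution of a branch is $\int w_i^2\,\Phi^{(2)}$, and a single-child branch with $|w_i|$ of order one would add a full unit to the coefficient of $\Phi^{(2)}$ and hence to the exponential rate---the $L^2$ operator norm of $\Lambda_\pm^{>\lambda_0}$ controls the \emph{first}-moment (cross) terms, not the diagonal. What actually makes these branches harmless in the paper is that the particle weight carries the stationary-phase amplitude $(2\pi/(r|\bz(\bx)|))^{(n-1)/2}$; on the support $r\ge\lambda_0/|\bz(\bx)|$ this gives $|w_i|^2\lesssim(2\pi/\lambda_0)^{n-1}$, so the diagonal contribution of Branches~II and~III to $B_\mathrm{w}^{\lambda_0}$ is $\mathcal{O}(\lambda_0^{-(n-1)})\Vert\Phi^{(2)}\Vert_1$ and can be absorbed into the $\lesssim$ for $\lambda_0$ large (see \eqref{ineq.asymp_term}--\eqref{eq_Bw_estimate_asymptotic}). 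Without this pointwise weight bound your integral inequality would read $u(t)\le u_0(t)+(1+c)\int_t^T\D\mathcal{G}(t'-t)\,u(t')$ with $c$ \emph{not} small, and Gr\"onwall would not deliver the rates $2\alpha_\ast\breve{\xi}$ or $2\max(K_V,\alpha_\ast\breve{\xi}^2/\gamma_0)$.

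A secondary point: the paper does not ``select one of three branches''; at each death it produces \emph{all five} offspring simultaneously (two HJD children for $\Lambda^{<\lambda_0}$, two ray children along $\sigma_\pm$, and the identity child recovering $\gamma_0\varphi$). Your selection-with-reweighting scheme could be made to match the first moment, but the second-moment bookkeeping is different, and you have omitted the identity branch needed to close the renewal equation. If you switch to the simultaneous five-offspring rule and invoke the squared-amplitude bound above, the rest of your argument goes through as written.
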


The rest is organized as follows. Section \ref{sec:pre} briefly reviews the basic of the Wigner equation. Section \ref{sec:spa} derives the $L^2$-boundedness and the stationary phase approximation to PDO. WBRW-HJD and WBRW-SPA are analyzed in Sections  \ref{sec:WBRW-HJD} and \ref{sec:WBRW-SPA}, respectively. In Section \ref{sec:num}, a typical numerical experiment is performed to verify our theoretical analysis. This paper is concluded in Section \ref{sec:con}.

\section{The Wigner equation}
\label{sec:pre}

The Wigner equation, introduced by Wigner in his pioneering work  \cite{Wigner1932}, provides a fundamental phase space description of quantum mechanics, 
and quantum behavior is completely characterized by the {nonlocal} pseudo-differential operator $\Theta_V[f]$ defined in Eq.~\eqref{def.pseudo_differential_operator}. Mathematically speaking, it is a partial integro-differential equation defined in phase space $(\bx, \bk) \in \mathbb{R}^n \times \mathbb{R}^n$
\begin{equation}
\begin{split}
&\frac{\partial }{\partial t}f(\bx, \bk, t)+\frac{\hbar \bk}{m} \cdot \nabla_{x} f(\bx, \bk, t)=\Theta_{V}\left[f\right](\bx, \bk, t), ~~0 \le t \le T, \\
& f(\bx, \bk, 0) = f_0(\bx, \bk),
\end{split}
\label{eq.Wigner}
\end{equation}
with an initial value $f_0 \in L^2(\mathbb{R}^n \times \mathbb{R}^n)$. 
The weak formulation of the Wigner equation is of great importance since any quantum observable $\langle \hat{A} \rangle(t)$ can be expressed by its Weyl symbol $A_W(\bx, \bk)$ averaged by the Wigner function \cite{tatarskiui1983}, namely, $\langle \hat{A} \rangle(t) = \langle A_W, f(t) \rangle$ with 
\begin{equation}
\langle f(t), \varphi(t) \rangle  = \int_{\mathbb{R}^n \times \mathbb{R}^n}f(\bx, \bk, t) \varphi(\bx, \bk, t) \D \bx \D \bk.
\end{equation}
Thus it motivates to study the dual system \cite{CarruthersZachariasen1983} and derive the adjoint equation of Eq.~\eqref{eq.Wigner} under a non-degenerate inner product:
\begin{equation}\label{def.nd_inner_product}
\langle f, \varphi \rangle_T =  \int_0^T \langle f(t), \varphi(t) \rangle \D t = \int_{\mathbb{R}^n \times \mathbb{R}^n \times [0,T]} f(\bx, \bk, t) \varphi(\bx, \bk, t) \D \bx \D \bk \D t,
\end{equation}
where $T$ is a fixed time instant and $\varphi \in C([0,T],  L^2(\mathbb{R}^n \times \mathbb{R}^n))$ is a test function with a compact support in $\mathbb{R}^n \times \mathbb{R}^n$.
Using the anti-symmetry \eqref{eq.anti_symmetry} of the Wigner kernel, we have  
\begin{equation}
\langle \Theta_V[f], \varphi \rangle = - \langle f, \Theta_V[\varphi] \rangle
\end{equation} 
and integration by parts directly leads to  
\begin{equation*}
\begin{split}
\langle \frac{\partial f}{\partial t} + \frac{\hbar \bk}{m} \cdot \nabla_{\bx} f - \Theta_{V}[f], \varphi \rangle_T & = \langle \frac{\partial f}{\partial t} , \varphi\rangle_T + \langle \frac{\hbar \bk}{m} \cdot \nabla_{\bx} f, \varphi \rangle_T - \langle \Theta_{V}[f], \varphi \rangle_T\\
&\hspace{-0.08cm} = \langle f_T, \varphi_T \rangle - \langle f_0, \varphi_0 \rangle - \langle f, \frac{\partial \varphi}{\partial t} + \frac{\hbar \bk}{m} \cdot \nabla_{\bx} \varphi - \Theta_{V}[\varphi] \rangle_T,
\end{split}
\end{equation*}
where $f_T$ and $\varphi_T$ are short for $f(\bx, \bk, T)$ and $\varphi(\bx, \bk, T)$, respectively.  Therefore the adjoint correspondence, i.e.,  the backward Wigner equation \eqref{eq.backward_Wigner}, 
is immediately derived by setting 
\begin{equation}\label{eq.averaged}
\langle \varphi_T, f_T \rangle = \langle \varphi_0,  f_0 \rangle.
\end{equation}
Formally, Eq.~\eqref{eq.averaged} allows us to evaluate the quantum mechanical observable $\langle \varphi_T, f_T\rangle$  {\sl only by the {``initial''} data} \cite{ShaoXiong2019}.

%\subsection{The renewal-type equation}
The backward Wigner equation \eqref{eq.backward_Wigner} can be cast into a renewal-type equation by adding a term $-\gamma_0 \cdot \varphi(\bx, \bk, t)$ on both sides, 
\begin{equation}
\frac{\partial}{\partial t} \varphi (\bx, \bk, t) + \frac{\hbar \bk}{m} \cdot \nabla_{\bx} \varphi(\bx, \bk, t)  - \gamma_0 \cdot \varphi(\bx, \bk, t) = \Theta_{V}[\varphi](\bx, \bk, t) - \gamma_0 \cdot \varphi(\bx, \bk, t),
\end{equation}
with $\gamma_0$ being a prescribed constant (see Eq.~\eqref{eq.PDO_plus_bound}),
and the mild solution reads
\begin{equation*}
\begin{split}
%\varphi(\bx, \bk, T) &= \me^{(T-t) \tilde{\mathcal{A}}} \varphi(\bx, \bk, t) + \int_t^T \me^{(T-t^{\prime}) \tilde{\mathcal{A}}} \{\Theta_V[\varphi](\bx, \bk, t^{\prime})+\gamma(x)\varphi(\bx, \bk, t^{\prime})\} \textup{d}t^{\prime},\\
\varphi(\bx, \bk, t) &= \me^{(T-t) \mathcal{A}} \varphi_T(\bx, \bk)  - \int_t^T \me^{(t^{\prime}-t) \mathcal{A}}(\Theta_V[\varphi](\bx, \bk, t^{\prime}) - \gamma_0 \cdot \varphi(\bx, \bk, t^{\prime})) \D t^{\prime},
\end{split}
\end{equation*}
by the variation-of-constant formula \cite{bk:Pazy1983}.
Here $\me^{\Delta t \mathcal{A}}$ is short for the semigroup generated by $ \mathcal{A} = \hbar \bk/m \cdot \nabla_{\bx}  - \gamma_0$, and its action on a given function can be now readily performed.
For instance, we have 
\begin{equation}\label{forward_semi_group}
\me^{(t^{\prime}-t) \mathcal{A}} g(\bx, \bk, t^{\prime}) = \me^{-\gamma_0(t^{\prime}-t)} g(\bx(t^{\prime}-t), \bk, t^{\prime}), \quad t^{\prime} \ge t, 
\end{equation}
where $\bx(\Delta t) = \bx + \hbar \bk \Delta t/m$ gives the {\sl forward-in-time trajectory} of $(\bx, \bk)$ with a positive time increment $\Delta t$. That is, the backward renewal-type equation Eq.~\eqref{def.backward_renewal_type_equation} is thus verified.

\section{$L^2$-boundedness and stationary phase approximation}
\label{sec:spa}

%it is necessary to discuss several properties of the pseudo-differential operator $\Theta_V$ and {the stationary phase approximation}

Before proceeding to the probabilistic aspect, we first need to establish the $L^2$-boundedness
of $\Theta_V$ under the assumptions (\textbf{A1}) and (\textbf{A2}). 
For $\psi \in C([0, T], L^1(\mathbb{R}^n))$, Eq.~\eqref{eq.PDO_bound} is readily verified by Young's convolution inequality, whereas the $L^2$-boundedness for weakly singular kernels is obtained {by the Hardy-Littlewood-Sobolev theorem} \cite{bk:LuDingYan2007}. 
After that, we present the stationary phase approximation and detail its remainder estimate. 

Suppose $(\textbf{A1})$ and the second condition of $(\textbf{A2})$ hold, then $L^p(\mathbb{R}^{2n}) \subset L^2(\mathbb{R}^{2n})$ for $1\le p < 2$ due to the H\"{o}lder's inequality:
\begin{equation}\label{eq.L2_embedding}
\Vert \varphi(t) \Vert_p = \Vert \varphi(t) \cdot \mone_{\mathcal{X}} \cdot \mone_{\mathcal{K}}\Vert_p \le \Vert \mone_{\mathcal{X}} \cdot \mone_{\mathcal{K}} \Vert_{\frac{2p}{2-p}} \Vert \varphi(t) \Vert_2 < \infty
\end{equation}
as $\mathcal{X} \times \mathcal{K}$ has a finite measure. Next we introduce a smooth cut-off function $\chi_{\varepsilon, R}(r) \in C^\infty([0, +\infty))$:
\begin{equation}\label{def.smooth_function}
\chi_{\varepsilon, R}(r) = \left\{
\begin{split}
&1, \quad r \in [\varepsilon, 2R], \\
&0, \quad r \in [0, \varepsilon/2)\cup(3R,+\infty),
\end{split}
\right.
\end{equation}
and let $\psi_{\varepsilon} = \psi \cdot \chi_{\varepsilon, R}(|\bk|)$, $\psi_{\infty} = \psi \cdot (1-\chi_{\varepsilon, R}(|\bk|)) \cdot \mone_{\{|\bk| \ge 2R\}}$. Here {$\varepsilon$ is introduced to remove the singularity at $\bk = 0$} and
$R$ is chosen sufficient large to ensure {$\Psi(|\bk|) \le C_{n, \alpha}|\bk|^{-n+\alpha}$, as stated in assumption ${\bf (A2)}$}. Then it is readily verified that the truncated operator $\Theta^\varepsilon_V[\varphi]$ has the following estimate 
\begin{equation}\label{def.pdo_L2_bound}
\begin{split}
\Vert \Theta_V^\varepsilon[\varphi](t) \Vert_{L^2_{\bk}} & \le  2^{n+1}\Vert \int_{\mathbb{R}^n} \me^{2\mi(\bk - \bk^{\prime}) \cdot \bz(\bx)} (\psi_\varepsilon + \psi_\infty)(2(\bk - \bk^{\prime}), t) \varphi(\bx, \bk^{\prime}, t) \D \bk^{\prime}  \Vert_{L^2_{\bk}} \\
&\le  2^{n+1}\Vert \int_{\mathbb{R}^n} \psi_\varepsilon(2(\bk - \bk^{\prime}), t)  (\me^{-2\mi \bk^{\prime} \cdot \bz(\bx)} \varphi(\bx, \bk^{\prime}, t)) \D \bk^{\prime} \Vert_{L^2_{\bk}} \\
&\quad + 2^{n+1}\Vert \int_{\mathbb{R}^n} \psi_\infty(2(\bk - \bk^{\prime}), t) (\me^{-2\mi \bk^{\prime} \cdot \bz(\bx)} \varphi(\bx, \bk^{\prime}, t)) \D \bk^{\prime} \Vert_{L^2_{\bk}}.
\end{split}
\end{equation}
The first term is bounded from $L^2(\mathbb{R}^{2n})$ to itself as $\Psi$ is locally integrable, say, 
\begin{equation}
\Vert \int_{\mathbb{R}^n} \psi_\varepsilon(2(\bk - \bk^{\prime}), t)  (\me^{-2\mi \bk^{\prime} \cdot \bz(\bx)} \varphi(\bx, \bk^{\prime}, t)) \D \bk^{\prime} \Vert_{L^2_{\bk}}  \le \Vert \Psi \cdot \chi_{\varepsilon, R} \Vert_{L^1_{\bk}} \cdot \Vert \varphi(t) \Vert_{L^2_{\bk}},
\end{equation}
and the bound is independent of $\varepsilon$. The second term is also bounded from $L^2(\mathbb{R}^{2n})$ to $L^p(\mathbb{R}^{2n})$, with $1/p = 1/2 + \alpha/n$, owing to {the Hardy-Littlewood-Sobelev theorem}, 
\begin{equation}
\begin{split}
&\Vert \int_{\mathbb{R}^n} \psi_\infty(2(\bk - \bk^{\prime}), t) (\me^{-2\mi \bk^{\prime} \cdot \bz(\bx)} \varphi(\bx, \bk^{\prime}, t)) \D \bk^{\prime}  \Vert_{L^2_{\bk}} \\
&\le \Vert \int_{\mathbb{R}^n} \frac{\chi_{2R, +\infty}(2|\bk - \bk^{\prime}|) \cdot |\varphi(\bx, \bk^{\prime}, t)|}{2^{n-\alpha}|\bk - \bk^{\prime}|^{n-\alpha}} \D \bk^{\prime} \Vert_{L^2_{\bk}} \le C_{p} \Vert \varphi(t) \Vert_{L^p_{\bk}} \le \tilde{C}_p \Vert \varphi(t) \Vert_{L^2_{\bk}}.
\end{split}
\end{equation}
Now let $\varepsilon \to 0$ in Eq.~\eqref{def.pdo_L2_bound}. By combining Eq.~\eqref{eq.L2_embedding}, we obtain that there exists a uniform $K_V$ such that
\begin{equation}
\Vert \Theta_V[\varphi](t) \Vert_2 \le K_V \Vert \varphi(t) \Vert_2.
\end{equation}

%\begin{equation}
%\begin{split}
%\Vert \Theta_V[\varphi](t) \Vert_{L_{\bk}^2} &\le ~ \frac{2}{\hbar \pi^n} \Big \Vert \int_{\mathbb{R}^n} \frac{\psi_0(\bk - \bk^{\prime}, t)}{|\bk - \bk^{\prime}|^{n-\alpha}} \left[ \varphi(\bx, \bk^{\prime}, t) \me^{\mi \bk^{\prime} \cdot \bz(\bx)} \right] \D \bk^{\prime} \Big \Vert_{L_{\bk}^2} \\
%&\le ~ \frac{2 C(n, \alpha, p, t)}{\hbar \pi^n} \Vert \varphi(\bx, \bk, t) \me^{\mi \bk \cdot \bz(\bx)} \Vert_{L^2_{\bk}} \le K_V \Vert \varphi(t) \Vert_{L^2_{\bk}}.
%\end{split}
%\end{equation}
%Here $1/p = 1/2 - \alpha/n$ ($1 \le p < 2$) and the second inequality is the generalized Hardy-Littlewood-Sobolev theorem (one can refer to Theorem 3.3.1 in \cite{bk:LuDingYan2007}). The last inequality uses Eq.~\eqref{eq.L2_embedding}. 

A remarkable feature of the oscillatory integral operator is the decay property as the integrand becomes more and more oscillating. As stated by H\"{o}rmander's theorem \cite{bk:Sogge2014-AM188,bk:Stein2016-PMS43}, when $\psi$ is sufficiently smooth and compactly supported,  it has a sharp estimate for a sufficiently large $|\bz(\bx)|$
\begin{equation}\label{eq.Hormander_estimate}
\Vert \Theta_V[\varphi](\bx, \bk, t) \Vert_{L^2_{\bk}} \le C |\bz(\bx)|^{-n/2} \Vert \varphi(t) \Vert_{L^2_{\bk}}.                                                                                                                                                                                                                                                                                                                                                                                                                                                                                                                                                                                                                                
\end{equation}
The physical meaning of Eq.~\eqref{eq.Hormander_estimate} is also clear. When we consider the two-body interacting potential like $V(\bx) = V(|\bx - \bx_A|)$, $\bz(\bx) = \bx - \bx_A$ turns out to be the spatial displacement between two bodies, so that the estimate \eqref{eq.Hormander_estimate} characterizes the decay rate of quantum interaction as the distance $|\bz(\bx)|$ increases. A similar result like Eq.~\eqref{eq.Hormander_estimate} can also be found in our framework, and the decay property will be fully utilized by the stationary phase method as presented in Theorem \ref{thm.stationary_phase_approximation}, which is definitely ignored by HJD \eqref{eq.PDO_splitting}.

\begin{proof}[Proof of Theorem \ref{thm.stationary_phase_approximation}]

It starts by splitting the wavevector $\bk$ into its modulus and orientation parts $\bk^{\prime} = r\sigma$ with the modulus $r=|\bk^\prime|>0$ and the orientation $\sigma=(\sigma_1,\sigma_2,\ldots,\sigma_n) \in \mathbb{S}^{n-1}$, {where $\mathbb{S}^{n-1}$ denotes the $(n-1)$-dimensional spherical surface}, and focusing on the high-frequency component 
\begin{equation}\label{eq:high}
%\begin{split}
%\Theta_V[\varphi](\bx, \bk, t) = &\int_{0}^{\lambda_0/|\bz(\bx)|} \left(\int_{S^{n-1}} \me^{\mi r |\bz(\bx)| \xi_x \cdot \Theta } \Psi(r, \Theta, t) \Delta_{r\Theta}[\varphi](\bx, \bk, t )\D \sigma(\Theta)\right) \D r \\
\Theta_V[\varphi] - \Lambda^{<\lambda_0}[\varphi] = \int_{\frac{\lambda_0}{|\bz(\bx)|}}^{+\infty} r^{n-1}\D r \int_{\mathbb{S}^{n-1}}\D \sigma ~\me^{\mi r |\bz(\bx)| \bz^{\prime} \cdot \sigma} \psi(r\sigma, t) \Delta_{r\sigma}[\varphi](\bx, \bk, t ),
%\end{split}
\end{equation}
where $\sigma = (\sigma_{1}, \cdots, \sigma_{n})$  represents the orientation of $\bz(\bx)$, $\bz^{\prime} = \bz/|\bz|$ and $\D \sigma$ denotes the  induced Lebesgue measure on $\mathbb{S}^{n-1}$.
After choosing the equatorial plane normal to $\bz^{\prime}$,
the unit sphere $\mathbb{S}^{n-1}$ can be decomposed into 
an upper hemisphere $\mathbb{S}_+^{n-1}$ and a lower one $\mathbb{S}_-^{n-1}$ satisfying $\pm \bz^{\prime} \in \mathbb{S}_\pm^{n-1}$. Accordingly, the inner surface integral of the first kind over $\mathbb{S}^{n-1}$ in Eq.~\eqref{eq:high} equals to the sum of those over $\mathbb{S}_+^{n-1}$ and $\mathbb{S}_-^{n-1}$. 
Without loss of generality, it suffices to assume that $\bz^{\prime} = (0, \dots, 0, 1)$, which be realized by a rotation otherwise. Let us start from the graph 
\begin{equation}
\mathbb{S}_{\pm}^{n-1} = \{\sigma\in\mathbb{R}^n \big| \sigma_n = \pm\phi(\sigma_1, \dots, \sigma_{n-1}), \sigma_i\in[-1,1], i=1,\ldots,n-1\}, 
\end{equation}
with
\begin{equation}
\phi(\sigma_1, \dots, \sigma_{n-1}) = \sqrt{1- \sigma_1^2 - \dots - \sigma_{n-1}^2}
\end{equation}
and take the surface integral of the first kind over the upper hemisphere as an example.
Now the phase function of the integrand becomes 
\begin{equation}
S(\bx, r, \sigma_1,\ldots,\sigma_{n-1}) = r |\bz(\bx)| \bz^{\prime} \cdot \sigma = r|\bz(\bx)| \phi(\sigma_1, \dots, \sigma_{n-1}).
\end{equation}
For such phase function, it can be easily verified that there is only one critical point $\sigma_+ =  (0, \dots, 0, 1)$ satisfying {$(\nabla_\sigma S)(\sigma_+) = 0$}, 
and the determinant of its Hessian matrix at $\sigma_+$ turns out to be 
\begin{equation}
\textup{det} (\textup{Hess}(S)(\sigma_+)) = \mathop{\textup{det}}_{1\le j, k \le n-1} \left(\frac{\partial^2 S}{\partial \sigma_j \partial \sigma_k}(\sigma_+)\right)   = (-r|\bz(\bx)|)^{n-1}.
\end{equation}
In consequence, applying the stationary phase method \cite{bk:Stein2016-PMS43} leads directly to 
\begin{equation*}
\begin{split}
& \int_{\mathbb{S}^{n-1}_+} \me^{\mi S(\bx, r, \sigma)} \psi(r\sigma, t) \Delta_{r \sigma}[\varphi](\bx, \bk, t )\D \sigma \\
& = \me^{\mi S(\bx, r, \sigma_+)}\left(\frac{(2\pi\mi)^{n-1}}{\textup{det}(\textup{Hess}(S)(\sigma_+))}\right)^{\frac{1}{2}}  \psi(r\sigma_+, t) \Delta_{r \sigma_+}[\varphi](\bx, \bk, t ) + R_{\sigma_+}(\bx, \bk, r, t) \\
& = \me^{\mi r |\bz(\bx)|}\left(\frac{2\pi}{\mi r |\bz(\bx)|}\right)^{\frac{n-1}{2}}  \psi(r\sigma_+, t) \Delta_{r\sigma_+}[\varphi](\bx, \bk, t) + R_{\sigma_+}(\bx, \bk, r, t),
\end{split}
\end{equation*}
the first term of which exactly recovers the integrand of $\Lambda^{> \lambda_0}_+$ in Eq.~\eqref{def.stationary_phase_approximation}. That is, the asymptotic of the oscillatory integral over the upper hemisphere is governed by the contribution from the critical point $\sigma_+$. 

It remains to estimate the integral of remainders $\int_{\lambda_0/|\bz(\bx)|}^{+\infty} R_{\sigma_\pm}(\bx, \bk, r, t) r^{n-1} \D r$. Since $\psi(\bk, t) \in C([0, T], C^\infty_0(\mathbb{R}^n))$ with its support contained in a compact ball $B(2R)$, we can replace $\psi$ by $\psi \cdot \chi_{\varepsilon, R} $, with $\varepsilon < \lambda_0/|\bz(\bx)|$. Now we rewrite $R_{\sigma_+}(\bx, \bk, r, t)$ as
\begin{equation}\label{eq.remainder_term}
\begin{split}
R_{\sigma_\pm}(\bx, \bk, r, t)= &~a_\pm(\bx, r, t)  \chi_{\varepsilon, R}(r) \Delta_{r\sigma_\pm}[\varphi](\bx, \bk, t) + b_\pm(\bx , \bk, r, t) \chi_{\varepsilon, R}(r),
\end{split}
\end{equation}
where
\begin{align*}
&a_\pm(\bx, r, t) =  \int_{\mathbb{S}_\pm^{n-1}} \me^{\mi r |\bz(\bx)| \bz^{\prime} \cdot \sigma} \psi(r\sigma, t) \D \sigma - \me^{\pm \mi r |\bz(\bx)|} \left(\frac{2\pi }{\pm \mi r |\bz(\bx)|}\right)^{\frac{n-1}{2}} \psi(r\sigma_\pm, t),\\
&b_\pm(\bx, \bk, r, t) =  \int_{\mathbb{S}_\pm^{n-1}} \me^{\mi r|z(\bx)| \bz^{\prime} \cdot \sigma} \psi(r\sigma, t)  (\Delta_{r\sigma}[\varphi] - \Delta_{r\sigma_\pm}[\varphi]) \D \sigma.
\end{align*}
According to Theorem 7.7.14 in \cite{bk:Hormander2003}, it has an estimate for $a_\pm$ that
\begin{equation}
|a_\pm(\bx, r, t)| \le {C}{(r|z(\bx))|^{-\frac{n+1}{2}}} \le C \lambda_0^{-\frac{n+1}{2}}.
\end{equation}
Thus for the first term in Eq.~\eqref{eq.remainder_term}, it yields that
\begin{equation}\label{eq.estimate_first_term}
\begin{split}
& \Vert \int_{\lambda_0/|\bz(\bx)|}^{+\infty} r^{n-1} a_\pm(\bx, r, t) \chi_{\varepsilon, R}(r)  \Delta_{r\sigma_\pm}[\varphi](\bx, \bk, t) \D r  \Vert_{L^2_{\bk}}^2 \\
&\le \int_{\varepsilon}^{+\infty}  r^{n-1} \chi_{\varepsilon, R}(r)  |a_\pm(\bx, r, t)|^2 \D r   \int_{\varepsilon}^{+\infty} r^{n-1} \chi_{\varepsilon, R}(r) \Vert \Delta_{r\sigma_\pm}[\varphi](\bx, \bk, t) \Vert_{L^2_{\bk}}^2 \D r\\
& \le \frac{2 C^2 R^{2n}}{n^2}  \cdot  \lambda_0^{-(n+1)}   \Vert \varphi(t) \Vert_{L^2_{\bk}}^2.
\end{split}
\end{equation}

For the second term {in Eq.~\eqref{eq.remainder_term}}, it suffices to consider a sufficiently smooth $\varphi$, so that the localization property of oscillatory integrals allows us to only estimate the integral in the neighborhood $U^\pm$ of the stationary phase points $\sigma_\pm = (0, \dots, 0)$. Due to the Morse lemma \cite{bk:Milnor2016}, there exists a  diffeomorphism from $U^+$ to a small neighborhood of $\by_+ = (0, \dots, 0)$. Indeed, since $\phi(0, \dots, 0) = 1$ and $\nabla \phi(0, \dots, 0) = 0$, we have that 
\begin{equation}
\phi(\sigma_1, \dots, \sigma_{n-1}) -1  = \int_0^1 (1-t) \frac{\D^2 \phi}{\D t^2} (t\sigma_1, \dots, t\sigma_{n-1}) \D t = \sum_{i, j} \sigma_i \sigma_j h_{ij}(\sigma_1, \cdots, \sigma_{n-1}),
\end{equation}
where
\begin{equation}
h_{ij}(\sigma_1, \cdots, \sigma_{n-1}) = \int_0^1 (1-t) \partial^2_{ij} \phi(t\sigma_1, \dots, t\sigma_{n-1}) \D t. 
\end{equation}
It notes that $H= (h_{ij})$ is a symmetric matrix and nonsingular at $(0, \dots, 0)$, and so is in $U^+$ by continuity, then there exists a nonsingular $n\times n$ matrix $B(\sigma_1, \dots, \sigma_{n-1}) = (b_{ij}(\sigma_1, \dots, \sigma_{n-1}))$ such that $H = B^\tau B$. Therefore, we can introduce the $\by$-coordinate
\begin{equation}
\by = (y_1, \cdots, y_{n-1})^\tau = B(\sigma_1, \cdots, \sigma_{n-1}) (\sigma_1, \cdots, \sigma_{n-1})^\tau
\end{equation}
so that the phase function turns out to be a quadratic form
\begin{equation}
S(\bx, r, \sigma) = \tilde{S}(\bx, r, \by) = r|\bm{z(\bx)}| (1 - y_1^2 - \dots - y_{n-1}^2).
\end{equation}
By the implicit function theorem, the inverse  conversion $\sigma_i = \kappa_i(y_1, \dots, y_{n-1}) \in C^\infty(\mathbb{R}^{n-1})$ also exists, which satisfies $\kappa_i(0, \dots, 0) = 0$. Therefore, it further has that
\begin{equation}
\sigma_i = \kappa_i(y_1, \dots, y_{n-1}) = \int_0^1 \frac{\D \kappa_i}{\D t}(ty_1, \dots, ty_{n-1}) \D t = \sum_{j= 1}^{n-1} y_j \kappa_{ij}(y_1, \dots, y_{n-1})
\end{equation}
with a suitable $C^\infty$ function $\kappa_{ij}$ that satisfies $\kappa_{ij}(0, \dots, 0) = \frac{\partial \kappa_i}{\partial y_j}(0, \dots, 0)$. 

Now we use Taylor's expansion,
\begin{equation}
\begin{split}
&\varphi(\bx, \bk \pm \frac{r\sigma}{2}, t) - \varphi(\bx, \bk \pm \frac{r\sigma_+}{2}, t) \\
& = \pm\frac{1}{2} \sum_{i=1}^{n-1} r \sigma_i \left(\frac{\partial \varphi}{\partial k_i} + \frac{\partial \phi}{\partial \sigma_i} \frac{\partial \varphi}{\partial k_n}\right)(\bx, \bk \pm \frac{r {\sigma_+}}{2}, t)  + \mathcal{O}( \sigma^2) \\
& = \pm\frac{1}{2} \sum_{i=1}^{n-1} \sum_{j=1}^{n-1} r y_j \kappa_{ij}(y_1, \dots, y_{n-1})\left(\frac{\partial \varphi}{\partial k_i} + \tilde{\phi_i} \frac{\partial \varphi}{\partial k_n}\right)(\bx, \bk \pm \frac{r {\sigma_+}}{2}, t) + \mathcal{O}(\by^2),
\end{split}
\end{equation}
with $\tilde{\phi}_i(y_1, \dots, y_{n-1}) = \frac{ \partial \phi}{\partial \sigma_i} (\sigma_1, \dots, \sigma_{n-1})$,  
and 
\begin{equation}
\Big |\int_{\mathbb{R}^{n-1}} \me^{- \mi \tilde{S}(\bx, r, \by)}  \by^l \tilde{\psi}(r\by, t) \D \by \Big | \le C (r|\bz(\bx)|)^{-\frac{n-1}{2} - \frac{|l|}{2}},
\end{equation}
for a sufficiently large $r |\bz(\bx)|$ and $\tilde{\psi}(r\by, t) = \psi(r\sigma, t)$ (here we adopt the convection that $\by^{l} = y_1^{l_1} \dots y_{n-1}^{l_{n-1}}$ for $l_1 + \dots + l_{n-1} = l$) \cite{bk:Stein2016-PMS43}, one can conclude that $L^2$-norm (in {$\bk$-variable}) of the oscillatory integral $b_+(\bx, \bk, r, t)$ is majorized by
\begin{equation}\label{eq.estimate_second_term}
\Vert b_+(\bx, \bk, r, t) \Vert_{L^2_{\bk}} \le C (r|\bz(\bx)|)^{-\frac{n}{2}} \Vert \nabla_{\bk}\varphi(t) \Vert_{L^2_{\bk}} \cdot \chi_{\varepsilon, R}(r) + \mathcal{O}\left((r|\bz(\bx)|)^{-\frac{n+1}{2}}\right).
\end{equation} 
Combining Eqs.~\eqref{eq.estimate_first_term} and \eqref{eq.estimate_second_term}, we arrive at 
\begin{equation}
\begin{split}
\big \Vert \int_{\frac{\lambda_0}{|\bz(\bx)|}}^{+\infty}   R_{\sigma_+}(\bx, \bk, r, t) r^{n-1} \D r \big \Vert_{L^2_{\bk}} & \le C\lambda_0^{-\frac{n}{2}} \Vert \varphi(t) \Vert_{H^1_{\bk}}  \int_{\frac{\lambda_0}{|\bz(\bx)|}}^{+\infty} r^{n-1}\chi_{\varepsilon, R}(r) \D r \\
&\lesssim \lambda_0^{-\frac{n}{2}} \Vert \varphi(t) \Vert_{H^1_{\bk}},
\end{split}
\end{equation}
which implies Eq.~\eqref{eq.error_estimate_stationary_phase_method}.
\end{proof}

\section{Variance estimation of WBRW-HJD}
\label{sec:WBRW-HJD}

%{\cf
%The second equality utilize the relation \eqref{eq.anti_symmetry}.
%}

From this section we initialize our discussion on the probabilistic aspect. The probabilistic interpretation of Eq.~\eqref{eq.backward_Wigner} borrows several ideas from the renewal theory, as the exponential distribution $\mathcal{G}(t^{\prime}- t)$ characterizes the arrival time of the random jump and the Wigner kernel $V_W$ the transition of states.  The main difficulty lies in the possible negative values of the Wigner kernel $V_W$ because it cannot be regarded as a transition kernel directly. Nonetheless, when the HJD is adopted and the split Wigner kernels $V_W^\pm$ can be normalized,  the existing BRW models are naturally introduced
and the corresponding first moments solve the Wigner equation \eqref{eq.backward_Wigner}. Furthermore, the probabilistic interpretation of the inner product  \eqref{eq.averaged} is also readily established through a straightforward extension of the probability space. 

%choose $2\mathcal{K}\subset \mathcal{K}_0 \subset \mathcal{K}_1$ and introduce a smooth cut-off function $\chi(k)$ satisfying $\chi(k) = \chi(-k)$ and 
%\begin{equation}\label{def.smooth_function}
%\chi(k) = \left\{
%\begin{split}
%&1, \quad k \in \mathcal{K}_0, \\
%&0, \quad k \in \mathbb{R}^n \setminus \mathcal{K}_1.
%\end{split}
%\right.
%\end{equation}
Under Assumption {\textbf{(A1)}}, it suffices to replace the Wigner kernels by the truncated ones $V_{W, R}^\pm$
\begin{equation}\label{def.truncated_wigner_kernel}
V_{W,R}^\pm(\bx, \bk, t) = V_W^\pm(\bx, \bk, t) \chi_{0, R}(|\bk|)
\end{equation}
with $B(2R)$ the {\sl minimal} ball that satisfies $\mathcal{K} \subset B(2R)$. Now the truncated Wigner kernels are integrable in $\mathbb{R}^n$,
\begin{equation}
\xi(\bx, t) = \int_{\mathbb{R}^n} V^+_{W, R}(\bx, \bk, t) \D \bk = \int_{\mathbb{R}^n} V^-_{W, R}(\bx, \bk, t) \D \bk < \infty,
\end{equation}
and the anti-symmetry relation \eqref{eq.anti_symmetry} is applied in the second equality. Indeed, $V^\pm_{W, R}$ play a role of transition kernels for two branches since
\begin{equation}
V_{W}(\bx, \bk, t) \chi_{0, R}(|\bk|) = \gamma_0 \frac{\xi(\bx, t)}{\gamma_0} \left[ \frac{V^+_{W, R}(\bx, \bk, t)}{\xi(\bx, t)} - \frac{V^-_{W, R}(\bx, \bk, t) }{\xi(\bx, t)} \right], \label{eq.Wigner_kernel_split}
\end{equation}
where the auxiliary constant $\gamma_0$, i.e., the intensity of the exponential distribution (see Eq.~\eqref{eq:measure2}), 
is chosen such that  
\begin{equation}\label{gamma0}
\gamma_0 \ge \displaystyle{\max_{t\in[0, T]} {\max_{\bx} \xi(\bx, t)}},
\end{equation}
{which has already been stated in Eq.~\eqref{eq.PDO_plus_bound}}. It deserves to mention that the normalizing function $\xi(\bx, t) $ is monotonically non-decreasing as $R$ increases.

%\begin{equation}
%\begin{split}
%\Theta_V[\varphi](\bx, \bk, t) &=  \int_{\mathbb{R}^n} V_W(\bx, \bk^{\prime}, t) \chi(\varepsilon |\bk^{\prime}|) \varphi(\bx, \bk - \bk^{\prime}, t) \D \bk^{\prime} \\
%& = \gamma_0 \frac{\xi(\bx, t)}{\gamma_0} \int_{\mathbb{R}^n} \left[ \frac{V^+_W(\bx, \bk^{\prime}, t) \chi(\varepsilon |\bk^{\prime}|)}{\xi(\bx, t)} - \frac{V^-_W(\bx, \bk^{\prime}, t) \chi(\varepsilon |\bk^{\prime}|)}{\xi(\bx, t)} \right] \varphi(\bx, \bk - \bk^{\prime}, t) \D \bk^{\prime},
%\end{split}
%\label{eq.Wigner_kernel_split}
%\end{equation}

Based on different interpretations of the multiplicative functional $\xi(\bx, t)/\gamma_0$, we propose two kinds of stochastic branching walk models, termed the weighted-particle model \cite{ShaoXiong2019} and the signed-particle model \cite{XiongShao2019}, respectively. The former is to interpret $\xi(\bx, t)/\gamma_0$ as the weight function, while the latter is to treat it as the probability to generate offsprings. The main result has been illustrated in Theorem \ref{thm.variance_estimate} and revealed the {discrepancy in variances}. In fact, choosing a larger $\gamma_0$ leads to a variance reduction in the weighted-particle model, but does not influence that of the signed-particle counterpart. 
It should be noted that the content until Definition \ref{def.stochastic_process} below has been well delineated in  \cite{ShaoXiong2019} and we just brief it here for the sake of descriptive integrality.

In order to identify the objects in a family history, we need a sequence. Beginning with an ancestor, denoted by $\langle 0 \rangle$, and we can denote its $i$-th children by  $\langle i \rangle$. Similarly, we can denote the $j$-th child of $i$-the child by $\langle i j \rangle$, and thus $ \langle  i_1 i_2\cdots i_n \rangle$ means $i_n$-th child of  $i_{n-1}$-th child of $\cdots$ of the $i_2$-child of the $i_1$-th child, with $i_n \in \left\{1, 2, 3\right\}$. The ancestor $\langle 0 \rangle$ is omitted here and hereafter for brevity.

The branching particle system considered involves four basic elements: the life-length $\tau$, the position $\bx$, the wavevector $\bk$ and the particle weight $w$. 
\begin{definition}\label{def.family_history}
A family history $\omega$ stands for a random sequence
\begin{equation}\label{Def:family_history}
\omega=\{(\tau_0, Q_0, w_0); (\tau_{1}, Q_{1}, w_{1});  (\tau_{2}, Q_{2}, {w_{2}}); (\tau_{3} ,Q_{3}, w_3); (\tau_{11}, Q_{11}, w_{11}); \cdots\},
\end{equation}
where $Q_i$ stands for $(\bx_i, \bk_i)$ and the tuple $(\tau_i, Q_{i}, w_i)=(\tau_{i}, \bx_{i}, \bk_{i}, w_i)$ appears in a definite order of enumeration. $\tau_{i}$, $\bx_i$, $\bk_i$, $w_i$ denote the life-length, starting position, wavevector and particle weight of the $i$-th particle, respectively. The exact order of $(\tau_i, Q_{i}, w_i)$ is immaterial but is supposed to be fixed. The collection of all family histories is denoted by $\Omega$.
\end{definition}

\begin{definition}
For $\omega=\{(\tau_0, Q_{0}, w_0); (\tau_1, Q_{1}, w_1); (\tau_2, Q_{2}, w_2); (\tau_3, Q_{3}, w_3); \cdots\}$, the subfamily $\omega_{i}$ is the family history of $\langle i \rangle$ and its descendants, as defined by $\omega_{i}=\{(\tau_i, Q_{i}, w_i); (\tau_{i1}, Q_{i1}, w_{i1}); (\tau_{i2}, Q_{i2}, w_{i2}), (\tau_{i3}, Q_{i3}, w_{i3}); \cdots\}$.  The collection of $\omega_i$ is denoted by $\Omega_i$.
\end{definition}

The particles are frozen when hitting the first exit time $T$. 
\begin{definition}\label{def:frozen}
Suppose the family history $\omega$ starts at time $t$ and define the stopping time, termed the arrival time $t_{i}$ of a branching-and-jump event, recursively as 
\begin{equation}
t_{0} = t, ~~t_{i_1} = t+\tau_0, ~~t_{i_1i_2\cdots i_n} = t_{i_1i_2\cdots i_{n-1}}+\tau_{i_1i_2\cdots i_{n-1}}.
\end{equation}
Then a particle $\langle  i_{1}i_{2}\cdots i_{n}\rangle$ is said to be frozen at $T$ if the following conditions hold
\begin{equation}
t_{i_1 i_2 \cdots i_{n}} < T ~~\textup{and}~~ t_{i_1 i_2 \cdots i_{n}} + \tau_{i_1 i_2 \cdots i_{n}} \ge T.
\end{equation}
In particular, when $t+\tau_0\ge T$, the ancestor particle $\langle 0 \rangle$ is frozen.  The collection of frozen particles starting at $t$ is denoted by $\mathcal{E}_t(\omega)$.
\end{definition}

Hereafter we assume that all particles in the branching particle system will move until reaching the frozen states, and still use $\Omega$ to denote the collection of the family history of all frozen particles. Next we will illustrate the probability laws $\Pi_Q^\mathrm{w}$ and $\Pi_Q^\mathrm{s}$ of a random cloud initially concentrated at $Q = (\bx, \bk)$. In general, the position and wavevector of the ancestor particle $\langle 0 \rangle$ are set to be $Q_0 = Q$. 

Now consider the probability of event $\mathrm{E}$ (starting at time $t$ at state $Q$)
\begin{equation}
\mathrm{E} =  \{ \tau_0 \in \mathcal{T}_0, (\tau_{i_1}, \bk_{i_1}) \in \mathcal{T}_1  \times \mathcal{K}_1, \cdots,  (\tau_{i_1\cdots i_n}, \bk_{i_1 \cdots i_n }) \in \mathcal{T}_n\times \mathcal{K}_n\}
\end{equation}
for any Borel set $\mathcal{T}_i$ on $[0, +\infty)$ and $\mathcal{K}_i$ on $ \mathbb{R}^n$, then the probability laws are given by
%\begin{equation}\label{def.probability_law}
%\begin{split}
%\Pr(E) & =\Pi_Q^\mathrm{w} \mone_{\{t < T\}}\mone_{E} = \Pi_Q^\mathrm{s} \mone_{\{t < T\}}\mone_{E} \\
%&= \int_{\mathcal{T}_0} \textup{d} \mathcal{G}(\tau_0)   \int_{\mathcal{K}_1} \textup{d}\bk_{i_1}   P^{i_1}_{t_{i_1}, \bx_{i_1}}(k; \bk_{i_1}) \int_{\mathcal{T}_1} \textup{d} \mathcal{G}(\tau_{i_1}) \int_{\mathcal{K}_2} \textup{d}\bk_{i_1 i_2}   P^{i_1 i_2}_{t_{i_1 i_2}, \bx_{i_1 i_2}}(\bk_{i_1}; \bk_{i_1 i_2})  \\
%&\quad  \times  \cdots \times \int_{\mathcal{K}_n}  \textup{d}\bk_{i_1\cdots i_n} P_{\bx_{i_1\cdots i_{n}}, t_{i_1\cdots i_{n}}}^{i_1 \cdots i_n}(\bk_{i_1 \cdots i_{n-1}}; \bk_{i_1 \cdots i_n}) \int_{\mathcal{T}_n} \textup{d} \mathcal{G}(\tau_{i_1\cdots i_n}), 
%\end{split}
%\end{equation}
\begin{equation}\label{def.probability_law}
\begin{split}
\hspace*{-0.05cm} 
\Pr(\mathrm{E}) = &\int_{\mathcal{T}_0} \textup{d} \mathcal{G}(\tau_0)   \int_{\mathcal{K}_1} \textup{d}\bk_{i_1}  \mathrm{K}_{i_1}^{t_{i_1}, \bx_{i_1}}(\bk_{i_1};\bk) \int_{\mathcal{T}_1} \textup{d} \mathcal{G}(\tau_{i_1}) \int_{\mathcal{K}_2} \textup{d}\bk_{i_1 i_2}   \mathrm{K}_{i_1 i_2}^{t_{i_1 i_2}, \bx_{i_1 i_2}}(\bk_{i_1 i_2};\bk_{i_1})  \\
&  \times  \cdots \times \int_{\mathcal{K}_n}  \textup{d}\bk_{i_1\cdots i_n} \mathrm{K}^{t_{i_1\cdots i_{n}}, \bx_{i_1\cdots i_{n}}}_{i_1 \cdots i_n}(\bk_{i_1 \cdots i_{n}}; \bk_{i_1 \cdots i_{n-1}}) \int_{\mathcal{T}_n} \textup{d} \mathcal{G}(\tau_{i_1\cdots i_n}), 
\end{split}
\end{equation}
where $\bx_{i_1\cdots i_{n}} = \bx_{i_1\cdots i_{n-1}}(\tau_{i_1\cdots i_{n-1}})$
%\begin{equation}
%\bx_{i_1\cdots i_{n}} = \bx_{i_1\cdots i_{n-1}} +\frac{\hbar }{m} \bk_{i_1\cdots i_{n-1}} \tau_{i_1\cdots i_{n-1}},
%\end{equation}
%and
with the transition kernels $\mathrm{K}_{i_1 \cdots i_m}^{t^{\prime}, \bx^{\prime}}(\bk; \bk^{\prime})$ given by 
\begin{equation}\label{def.transition_kernel}
\mathrm{K}_{i_1 \cdots i_m}^{t^{\prime}, \bx^{\prime}}(\bk; \bk^{\prime}) =  \left\{
\begin{split}
&\frac{V_{W, R}^+(\bx^{\prime}, (-1)^{i_m}(\bk - \bk^{\prime}), t^{\prime})}{\xi(\bx^{\prime}, t^{\prime})}, &\quad &i_m = 1,2,\\
& \delta(\bk - \bk^{\prime}), &\quad &i_m = 3.
\end{split}
\right.
\end{equation}
%{\cf
%\begin{equation}\label{def.transition_kernel}
%\mathit{K}_{i_1 \cdots i_m}^{t^{\prime}, \bx^{\prime}}(\bk; \bk^{\prime}) =  \left\{
%\begin{split}
%&\frac{V_W^-(\bx^{\prime}, \bk - \bk^{\prime}, t^{\prime}) \chi(\varepsilon |\bk - \bk^{\prime}|)}{\xi(x^{\prime}, t^{\prime})} , &\quad i_m = 1, \\
%&\frac{V_W^+(\bx^{\prime}, \bk - \bk^{\prime}, t^{\prime}) \chi(\varepsilon |\bk - \bk^{\prime}|)}{\xi(\bx^{\prime}, t^{\prime})}, &\quad i_m = 2,\\
%& \delta(\bk - \bk^{\prime}), &\quad i_m = 3.
%\end{split}
%\right.
%\end{equation}
%\begin{equation}\label{def.transition_kernel}
%\begin{split}
%&i_m = 1, \quad P^{i_1 \cdots i_m}_{t^{\prime}, x^{\prime}}(k; \bk^{\prime}) = \frac{V_W^-(x^{\prime}, k - \bk^{\prime}, t^{\prime}) \chi(\varepsilon |k - \bk^{\prime}|)}{\xi(x^{\prime}, t^{\prime})},  \\
%&i_m = 2, \quad P^{i_1 \cdots i_m}_{t^{\prime}, x^{\prime}}(k; \bk^{\prime}) = \frac{V_W^+(x^{\prime}, k - \bk^{\prime}, t^{\prime}) \chi(\varepsilon |k - \bk^{\prime}|)}{\xi(x^{\prime}, t^{\prime})},\\
%&i_m = 3, \quad P^{i_1 \cdots i_m}_{t^{\prime}, x^{\prime}}(k; \bk^{\prime}) = \delta(k - \bk^{\prime}).
%\end{split}
%\end{equation}
%}
The difference lies in the setting of particle weight $w_{i_1 \cdots i_m}$.
\begin{itemize}

\item[(1)] For the weighted particle model, 
\begin{equation}\label{def.wp_particle_weight}
w_{i_1\cdots i_m} = \left\{
\begin{split}
&\frac{\xi(\bx_{i_1\cdots i_m}, t_{i_1\cdots i_m})}{\gamma_0} \cdot \mone_{\mathcal{K}}(\bk_{i_1\cdots i_m}) , &\quad i_m &= 1,2\\
&1, &\quad i_m &= 3,
\end{split}
\right.
\end{equation}

\item[(2)] For the signed particle model, for $i_m = 1$ and $2$,
\begin{equation}\label{def.sp_particle_weight}
w_{i_1\cdots i_m} = \left\{
\begin{split}
&1, \quad \textup{with} ~\Pr = \frac{\xi(\bx_{i_1\cdots i_m}, t_{i_1\cdots i_m})}{\gamma_0} \cdot \mone_{\mathcal{K}}(\bk_{i_1\cdots i_m}), \\
&0, \quad \textup{otherwise},
\end{split}
\right.
\end{equation}
and $w_{i_1\cdots i_m} = 1$ for $i_m = 3$.

\end{itemize}

The setting of initial particle weight $w_0$ depends on the situation. At this stage, it suffices to take $w_0 = 1$. However, later we will show that the initial particle weights may take values in $\{-1, 1\}$, resulting from the importance sampling according to the initial Wigner function $f_0$. Now we illustrate the construction of the stochastic processes $\mathrm{X}_t^\mathrm{w}$ and $\mathrm{X}_t^\mathrm{s}$.

\begin{definition}
\label{def.stochastic_process}
Suppose $(\bx_i, \bk_i)$ is the starting state of a frozen particle $i$ in a given family history $\omega$, and let $\delta_{(\bx, \bk)}$ be the Dirac measure concentrated at  state $(\bx, \bk)$. Then the weighted-particle WBRW is given by
\begin{equation}\label{def.wpWBRW}
\mathrm{X}_t^\mathrm{w}(\omega) = \langle \varphi_T, \sum_{i \in \mathcal{E}_t(\omega)} \hat{w}_{i} \cdot \delta_{(\bx_{i}(T-t_i), \bk_{i})} \rangle =  \sum_{i \in \mathcal{E}_t(\omega)} \hat{w}_i \cdot \varphi_T(\bx_i(T-t_i), \bk_i),
\end{equation}
the cumulative weight $\hat{w}_i \in [-1, 1]$ for $i = \langle i_1 i_2 \cdots i_n \rangle$ is defined by the product of the particle weights $w_{i_1\cdots i_m}$, 
\begin{equation}
\hat{w}_{i} = \prod_{m=1}^n (-1)^{i_m+1}w_{i_1\cdots i_m} , \quad |w_{i_1\cdots i_m}| \le 1,
\end{equation}
where $w_{i_1\cdots i_m}$ are given by \eqref{def.wp_particle_weight}.

Similarly, the signed-particle WBRW is given by
\begin{equation}\label{def.spWBRW}
\mathrm{X}_t^\mathrm{s}(\omega) = \langle \varphi_T, \sum_{i \in \mathcal{E}_t(\omega)} \hat{s}_{i} \cdot \delta_{(\bx_{i}(T-t_i), \bk_{i})} \rangle =  \sum_{i \in \mathcal{E}_t(\omega)} \hat{s}_i \cdot \varphi_T(\bx_i(T-t_i), \bk_i),
\end{equation}
where cumulative weight $\hat{s}_i \in \{-1, 0, 1\}$ for $i = \langle i_1 i_2 \cdots i_n \rangle$ is defined by
\begin{equation}
\hat{s}_i =  \prod_{m=1}^n (-1)^{i_m+1} w_{i_1\cdots i_m}, \quad w_{i_1\cdots i_m} \in \{-1, 0, 1\},
\end{equation}
where $w_{i_1\cdots i_m}$ are given by \eqref{def.sp_particle_weight}.

\end{definition}

%we define the weighted exit measure as
%\begin{equation}
%\mu_w=\sum_{i \in \mathcal{E}_f(\omega)} \tilde{w}_{i} \cdot \delta_{(\bx_{i}(T-t_i), \bk_{i})},
%\end{equation}
%where $w_i$ is the cumulative weight of particle $i$. For an object $i=\langle i_{1}i_{2}\cdots i_{n} \rangle$, $w_i$ is given by
%\begin{equation}\label{eq:phii}
%\mathsf{w}_i= w_0 \cdot \prod_{s=1}^n w_{i_s} \phi_{\varepsilon}(\bk_{i_1\cdots i_{s-1}} - \bk_{i_1\cdots i_{s}}).
%\end{equation}
%Similarly, we can define the weighted exit measure for the signed-particle WBRW as
%\begin{equation}
%\mu_s = \sum_{i \in \mathcal{E}_f(\omega)} \hat{w}_{i} \cdot \delta_{(\bx_{i}(T-t_i), \bk_{i})},
%\end{equation}
%where
%\begin{equation}\label{def.signed_measure}
%\hat{w}_i = w_0 \cdot \prod_{s=1}^n (-1)^{i_s+1} \phi_{\varepsilon}(\bk_{i_1\cdots i_{s-1}} - \bk_{i_1\cdots i_{s}}). 
%\end{equation}

%The disparity of the weighted-particle model and the signed-particle counterpart lies in the definition of the incremental weights $w_{i_1\cdots i_n}$. As a consequence, in the weighted-particle model, the cumulative weights $\hat{w}_i$ take values continuously in $[-1, 1]$, while in the signed-particle one, $\hat{s}_i \in \{-1, 1\}$.

According to Eq.~\eqref{def.probability_law}, it's easy to verify the Markov property of $\Pi_Q^\mathrm{w}$, which also holds for $\Pi_Q^\mathrm{s}$.
\begin{equation}\label{def.Markov_property}
\begin{split}\mathrm
\mathrm{\Pi}^\mathrm{w}_{Q} X^\mathrm{w}_t X^\mathrm{w}_{t+\tau_0} & =\Pi_Q^\mathrm{w} X^\mathrm{w}_t (\Pi^\mathrm{w}_{Q_{i_1}} X^\mathrm{w}_{t+\tau_0}) \\
& = \int_{\Omega} \left(X^\mathrm{w}_t(\omega) \int_{\Omega_{i_1}} X^\mathrm{w}_{t+\tau_0}(\omega_{i_1}) \Pi^\mathrm{w}_{Q_{i_1}} (\D \omega_{i_1})\right)\mathrm{\Pi}^\mathrm{w}_{Q}(\D \omega).
\end{split}
\end{equation}

\begin{definition}
\label{def:mom}
The first moments of $\mathrm{X}_t^\mathrm{w}$ and $\mathrm{X}_t^\mathrm{s}$ are denoted by
\begin{equation}
\phi_\mathrm{w}^{(1)}(\bx, \bk, t) =\Pi_Q^\mathrm{w} \mathrm{X}_t^\mathrm{w}, \quad \phi_\mathrm{s}^{(1)}(\bx, \bk, t) =\Pi_Q^\mathrm{s} \mathrm{X}_t^\mathrm{s}, 
\end{equation}
and the second moments are
\begin{equation}
\phi_\mathrm{w}^{(2)}(\bx, \bk, t) =\Pi_Q^\mathrm{w} (\mathrm{X}_t^\mathrm{w})^2, \quad \phi_\mathrm{s}^{(2)}(\bx, \bk, t) =\Pi_Q^\mathrm{s} (\mathrm{X}_t^\mathrm{s})^2, 
\end{equation}
In addition, the variances are defined as
\begin{align}\label{def.Xt_second_moment}
& \Delta \phi^{(2)}_\mathrm{w}(\bx, \bk, t)= \Pi_Q^\mathrm{w} (\mathrm{X}_t^\mathrm{w} - \phi_\mathrm{w}^{(1)}(\bx, \bk, t))^2,\\
& \Delta \phi^{(2)}_\mathrm{s}(\bx, \bk, t) = \Pi_Q^\mathrm{s} ((\mathrm{X}_t^\mathrm{s} - \phi_\mathrm{s}^{(1)}(\bx, \bk, t))^2.
\end{align}
\end{definition}
Before proceeding to the proof of Theorem \ref{thm.variance_estimate}, we require the following two lemmas.
\begin{lemma}[Backward Gr\"{o}nwall's inequality]\label{lemma.Gronwall_ineq} 
Suppose $\beta > 0$ and $u$ satisfies the integral inequality
\begin{equation}
u(t) \le \alpha(t) + (1+\frac{\beta}{\gamma_0} ) \int_t^T \D \mathcal{G}(t^{\prime} - t) u(t^{\prime}),
\end{equation}
then
\begin{equation}\label{eq.Gronwall_ineq}
u(t) \le \alpha(t) + (\gamma_0 + \beta) \int_t^T \me^{\beta( t^{\prime} - t)}\alpha(t^{\prime}) \D t^{\prime}
\end{equation}
\end{lemma}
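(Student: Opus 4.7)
The plan is to convert the integro-inequality into a first-order differential inequality by means of an integrating-factor trick, exactly as in the classical Grönwall argument but reversed in time. Using $\D\mathcal{G}(s-t)=\gamma_0\me^{-\gamma_0(s-t)}\D s$, the hypothesis can be rewritten as
\begin{equation*}
u(t) \le \alpha(t) + (\gamma_0+\beta)\int_t^T \me^{-\gamma_0(s-t)}\,u(s)\,\D s.
\end{equation*}
I introduce the auxiliary quantity
\begin{equation*}
U(t) \;:=\; \int_t^T \me^{-\gamma_0(s-t)}\,u(s)\,\D s \;=\; \me^{\gamma_0 t}\!\int_t^T \me^{-\gamma_0 s}\,u(s)\,\D s,
\end{equation*}
which (by local integrability of $u$ on $[t,T]$, implicit in the statement) is absolutely continuous with $U'(t)=\gamma_0 U(t)-u(t)$ almost everywhere.

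Next I substitute the pointwise bound $-u(t)\ge -\alpha(t)-(\gamma_0+\beta)U(t)$ into this identity to obtain the backward differential inequality
\begin{equation*}
U'(t) + \beta\,U(t) \;\ge\; -\alpha(t),
\end{equation*}
multiply by the integrating factor $\me^{\beta t}$ to get $(\me^{\beta t}U(t))'\ge -\me^{\beta t}\alpha(t)$, and integrate from $t$ to $T$. Because $U(T)=0$ this yields
\begin{equation*}
U(t) \;\le\; \int_t^T \me^{\beta(s-t)}\alpha(s)\,\D s.
\end{equation*}
Plugging this bound back into the hypothesis $u(t)\le\alpha(t)+(\gamma_0+\beta)U(t)$ immediately recovers the advertised estimate \eqref{eq.Gronwall_ineq}.

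The only subtle point is the regularity needed to justify the differential manipulation on $U$, but absolute continuity from local integrability of $u$ is sufficient and requires no further hypothesis. As a sanity check, an alternative route is to iterate the hypothesis: substituting it into itself $n$ times generates the kernel $(\gamma_0+\beta)^n\me^{-\gamma_0(s-t)}(s-t)^{n-1}/(n-1)!$ against $\alpha$, and resumming over $n\ge 1$ yields exactly $(\gamma_0+\beta)\me^{\beta(s-t)}$, matching the integrating-factor calculation. I therefore expect no real obstacle beyond this bookkeeping; the integrating-factor route is preferable for its brevity and will be the one written out in the final proof.
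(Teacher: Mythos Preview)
Your proof is correct. The integrating-factor argument for $U(t)=\int_t^T \me^{-\gamma_0(s-t)}u(s)\,\D s$ cleanly yields the differential inequality $U'(t)+\beta U(t)\ge -\alpha(t)$, and integration against $\me^{\beta t}$ with the terminal condition $U(T)=0$ gives exactly the desired bound.

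The paper takes a slightly different route: it makes the substitution $\tilde u(t)=\me^{-\gamma_0 t}u(t)$, $\tilde\alpha(t)=\me^{-\gamma_0 t}\alpha(t)$, which absorbs the exponential weight in $\D\mathcal{G}$ and reduces the hypothesis to the plain backward inequality $\tilde u(t)\le\tilde\alpha(t)+(\gamma_0+\beta)\int_t^T\tilde u(t')\,\D t'$; it then invokes the classical Gr\"onwall inequality as a black box and converts back. Your approach is more self-contained, since you effectively re-prove the Gr\"onwall step via the integrating factor rather than citing it, and you never leave the original $u$-variables. The paper's approach is marginally shorter if one is willing to quote the standard lemma. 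Both arguments are equally rigorous; the iteration remark you added as a sanity check is also a valid third proof and corresponds to expanding the Neumann series for the Volterra kernel.
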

\begin{proof}
Let 
\begin{equation}\label{eq.conversion}
\tilde{u}(t) = \me^{-\gamma_0 t} u(t), \quad \tilde{\alpha}(t) = \me^{-\gamma_0 t} \alpha(t),
\end{equation}
it yields
\begin{equation}
\tilde{u}(t) \le \tilde{\alpha}(t) + (\gamma_0 + \beta) \int_t^T \tilde{u}(t^{\prime}) \D t^{\prime}.
\end{equation}

By the Gr\"{o}nwall's inequality, we have
\begin{equation}\label{eq.Gronwall_ineq_2}
\tilde{u}(t) \le \tilde{\alpha}(t) + (\gamma_0 + \beta) \int_t^T \me^{\gamma_0(t^{\prime}-t)+ \beta( t^{\prime} - t) }\tilde{\alpha}(t^{\prime}) \D t^{\prime}.
\end{equation}
Substituting Eq.~\eqref{eq.conversion} into Eq.~\eqref{eq.Gronwall_ineq_2} yields Eq.~\eqref{eq.Gronwall_ineq}.
\end{proof}

\begin{lemma}[Prior $L^2$-estimate]\label{lemma.L2_estimate} 
Suppose $\varphi_T \in L^2(\mathbb{R}^{n} \times \mathbb{R}^{n})$ and the pseudo-differential operator $\Theta_V$ is bounded from $L^2(\mathbb{R}^{n} \times \mathbb{R}^{n})$ to itself, say, $\Vert \Theta_V[\varphi](t) \Vert_2 \le K_V \Vert \varphi(t) \Vert_2 $, then for a given $T < \infty$, 
\begin{equation}\label{eq.L2_estimate}
\Vert \varphi (t) \Vert_2 \le \me^{K_V(T-t)} \Vert \varphi_T \Vert_2, \quad t \in [0, T].
\end{equation}
\end{lemma}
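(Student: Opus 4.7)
The plan is to derive a differential inequality for $\Vert \varphi(t) \Vert_2^2$ and then integrate backward in time from $T$ to $t$ via Gronwall. Starting from the backward Wigner equation \eqref{eq.backward_Wigner}, I would take its $L^2(\mathbb{R}^n\times\mathbb{R}^n)$ inner product with $\varphi(t)$ to obtain
$$\frac{1}{2}\frac{\textup{d}}{\textup{d} t}\Vert \varphi(t) \Vert_2^2 + \Big\langle \varphi(t),\, \tfrac{\hbar \bk}{m}\cdot\nabla_{\bx}\varphi(t) \Big\rangle = \langle \varphi(t),\, \Theta_V[\varphi](t) \rangle.$$

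Next, I would dispose of the advection term by integration by parts in $\bx$. Since $\nabla_{\bx}\cdot(\hbar\bk/m)=0$ and $\varphi(t)\in L^2(\mathbb{R}^{2n})$ has decaying tails (so the boundary contribution at spatial infinity vanishes, as can be justified by the usual density/mollification argument from smooth compactly supported functions), one gets $\int \tfrac{\hbar \bk}{m}\cdot\nabla_{\bx}(\varphi^2)\,\textup{d}\bx\,\textup{d}\bk=0$, so the transport term contributes zero. For the PDO term, Cauchy--Schwarz together with the assumed $L^2$-boundedness $\Vert \Theta_V[\varphi](t)\Vert_2 \le K_V \Vert \varphi(t)\Vert_2$ gives
$$|\langle \varphi(t),\, \Theta_V[\varphi](t)\rangle| \le \Vert \varphi(t)\Vert_2 \, \Vert \Theta_V[\varphi](t)\Vert_2 \le K_V \Vert \varphi(t)\Vert_2^2.$$
Combining these two bounds yields the differential inequality $\left|\tfrac{\textup{d}}{\textup{d} t}\Vert \varphi(t)\Vert_2^2 \right| \le 2 K_V \Vert \varphi(t)\Vert_2^2$ on $[0,T]$.

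Finally, a standard backward Gronwall argument on $[t,T]$ (with terminal data $\Vert \varphi(T)\Vert_2^2=\Vert \varphi_T\Vert_2^2$) produces $\Vert \varphi(t)\Vert_2^2 \le \me^{2K_V(T-t)}\Vert \varphi_T\Vert_2^2$, and taking square roots delivers \eqref{eq.L2_estimate}. There is no genuine obstacle; the only technical point is justifying the integration-by-parts step in $\bx$, which is routine. I note in passing that the anti-symmetry relation $\langle \varphi,\Theta_V[\varphi]\rangle=0$ (derivable from \eqref{eq.anti_symmetry} exactly as in the adjoint calculation of Section~\ref{sec:pre}) would actually give the sharper time-independent bound $\Vert \varphi(t)\Vert_2=\Vert \varphi_T\Vert_2$, but the weaker exponential form in \eqref{eq.L2_estimate} is all that the subsequent variance estimates require, so I would not pursue this refinement.
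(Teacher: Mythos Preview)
Your argument is correct, but it proceeds along a different line from the paper. You run a standard energy estimate: pair the PDE with $\varphi$, kill the advection term by integration by parts, bound the PDO term by Cauchy--Schwarz, and integrate the resulting differential inequality backward from $T$. The paper instead works at the level of the mild solution via the renewal-type integral equation \eqref{def.backward_renewal_type_equation}: it uses that the free transport semigroup $\me^{t(\mathcal{A}+\gamma_0)}$ is an $L^2$-isometry, applies the triangle and Minkowski inequalities to get
\[
\Vert \varphi(t)\Vert_2 \le \me^{-\gamma_0(T-t)}\Vert\varphi_T\Vert_2 + \bigl(1+\tfrac{K_V}{\gamma_0}\bigr)\int_t^T \D\mathcal{G}(t'-t)\,\Vert\varphi(t')\Vert_2,
\]
and then closes with the integral backward Gr\"onwall lemma (Lemma~\ref{lemma.Gronwall_ineq}) that was set up just beforehand. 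Your route is shorter and more elementary; the paper's route has the advantage of working directly with mild solutions (no need to justify differentiability of $t\mapsto\Vert\varphi(t)\Vert_2^2$ or the $\bx$-integration by parts) and, more importantly, it reuses exactly the same renewal-equation/Gr\"onwall machinery that drives all of the subsequent variance estimates in Section~\ref{sec:WBRW-HJD}. Your side remark that the anti-symmetry \eqref{eq.anti_symmetry} actually gives $\langle\varphi,\Theta_V[\varphi]\rangle=0$ and hence exact $L^2$-conservation is correct and worth noting, though as you say the exponential bound suffices downstream.
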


\begin{proof}
The operator semigroup $T(t) = \me^{t(\mathcal{A}+\gamma_0)}$ is an isometry from $L^{2}(\mathbb{R}^n)$ to itself. Thus by the triangular inequality and the extended Minkowski's inequality, it has that
\begin{equation}\label{eq.prior_L2_estimate}
\begin{split}
\Vert \varphi(t) \Vert_2 & \le \me^{-\gamma_0(T-t)} \Vert \varphi_T \Vert_2 + \Vert \int_t^T \D \fG(t^{\prime}-t) \{-\frac{\Theta_V[\varphi](t^{\prime})}{\gamma_0}+ \varphi(t^{\prime}) \} \Vert_2\\
& \le \me^{-\gamma_0(T-t)} \Vert \varphi_T \Vert_2 + \int_t^T \D \fG(t^{\prime}-t) \{\Vert -\frac{\Theta_V[\varphi](t^{\prime})}{\gamma_0} + \varphi(t^{\prime}) \Vert_2\} \\
&\le \me^{-\gamma_0(T-t)} \Vert \varphi_T \Vert_2  + (1+\frac{K_V}{\gamma_0})\int_t^T \D \fG(t^{\prime}-t)~ \Vert \varphi(t^{\prime}) \Vert_2.
\end{split}
\end{equation}
Thus by the Lemma \ref{lemma.Gronwall_ineq}, we arrive at 
\begin{equation}
\frac{\Vert \varphi(t) \Vert_2}{\Vert \varphi_T \Vert_2} \le \me^{-\gamma_0(T-t)} + (\gamma_0+K_V)\int_t^T \me^{K_V(t^{\prime}-t) -\gamma_0(T-t^{\prime})}\D t^{\prime} = \me^{K_V (T-t)}.
\end{equation}
\end{proof}

\begin{proof}[Proof of the first part of Theorem \ref{thm.variance_estimate}]

We first consider the weighted-particle part. To estimate $\Delta \phi_\mathrm{w}^{(2)}(\bx, \bk, t)$, it starts from the fact that 
\begin{equation}
\mathrm{\Pi}^\mathrm{w}_{Q} (\mathrm{X}_t^\mathrm{w})^2  = \Pi_Q^\mathrm{w} \left( \mone_{\mathrm{E}_t} (\mathrm{X}_t^\mathrm{w})^2 \right) +\Pi_Q^\mathrm{w} \left( \mone_{\mathrm{E}_t^c} (\mathrm{X}_t^\mathrm{w})^2  \right),
\end{equation}
where the events are $\mathrm{E}_t=\left\{\tau_0: t+\tau_0\ge T\right\}\cap\Omega$ and  $\mathrm{E}_t^c=\left\{\tau_0: t+\tau_0 < T\right\}\cap\Omega$. The second term is expanded as 
\begin{equation*}
\begin{split}
&\mathrm{\Pi}^\mathrm{w}_{Q} \left( \mone_{\mathrm{E}_t^c} (\mathrm{X}_t^\mathrm{w})^2  \right) = \sum_{i=1}^3  \int_{ \mathrm{E}_t^c} \left( w^2_i\int_{\Omega_i}  (X_{t+\tau_0}^\mathrm{w})^2(\omega_i) \Pi^\mathrm{w}_{Q_i}(\D \omega_i) \right)\Pi_Q^\mathrm{w}(\D \omega) \\
&+ \sum_{i \ne j}  \int_{\mathrm{E}_t^c}  \left( (-1)^{i+j}w_i w_j  \int_{\Omega_i} X^\mathrm{w}_{t+\tau_0}(\omega_i) \Pi^\mathrm{w}_{Q_i}(\D \omega_i) \int_{\Omega_j} X^\mathrm{w}_{t+\tau_0} (\omega_j) \Pi^\mathrm{w}_{Q_j}(\D \omega_j) \right)\mathrm{\Pi}^\mathrm{w}_{Q}(\D \omega). 
\end{split}
\end{equation*}
Thus the second moment {$\Pi^\mathrm{w}_Q (\mathrm{X}_t^\mathrm{w})^2$} satisfies the following renewal-type equation.
\begin{equation}\label{def.wpWBRW_renewal_eq}
\begin{split}
\mathrm{\Pi}^\mathrm{w}_{Q} (\mathrm{X}_t^\mathrm{w})^2  = & \me^{- \gamma_0(T-t)} \varphi^2_T(\bx(T-t), \bk) + \int_{0}^{T-t}  \D \fG(\tau_0) B_\mathrm{w}[\phi_\mathrm{w}^{(2)}](\bx(\tau_0), \bk, t+\tau_0)\\
&+ \int_{0}^{T-t}  \D \fG(\tau_0)  C(\bx(\tau_0), \bk, t+\tau_0), 
\end{split}
\end{equation}
where the operator $B_\mathrm{w}$ (the diagonal component) is given by
\begin{equation*}
\begin{split}
B_\mathrm{w}[\phi_\mathrm{w}^{(2)}](\bx, \bk, t) =  \frac{\xi(\bx, t)}{\gamma^2_0} \Theta^-_V[\phi_\mathrm{w}^{(2)}](\bx, \bk, t) + \frac{\xi(\bx, t)}{\gamma^2_0} \Theta^+_V[\phi_\mathrm{w}^{(2)}](\bx, \bk , t) + \phi_\mathrm{w}^{(2)}(\bx, \bk, t),
\end{split}
\end{equation*}
and the correlated term $C(\bx, \bk, t)$ reads 
\begin{equation*}
C(\bx, \bk, t) = -\frac{2}{\gamma_0}  \Theta_V[\varphi](\bx, \bk, t) \cdot \varphi(\bx, \bk, t) -\frac{2}{\gamma_0^2} \Theta_V^-[\varphi](\bx, \bk, t) \cdot  \Theta_V^+[\varphi](\bx, \bk, t).
\end{equation*}

By the triangular inequality and Young's inequality, it's readily to verify that $B_\mathrm{w}$ is bounded operator from $L^1(\mathbb{R}^{n}) \times L^1_0(\mathbb{R}^{n})$ to itself, 
\begin{equation}\label{eq.B_w_bound}
\begin{split}
 \Vert B_\mathrm{w}[\phi_\mathrm{w}^{(2)}](t) \Vert_1 &\le \frac{\breve{\xi}}{\gamma^2_0} \Vert \Theta_V^-[\phi_\mathrm{w}^{(2)}](t) \Vert_1 + \frac{\breve{\xi}}{\gamma^2_0} \Vert \Theta_V^+[\phi_\mathrm{w}^{(2)}](t) \Vert_1 + \Vert \phi_\mathrm{w}^{(2)}(t) \Vert_1 \\
 & \le (1+\frac{2\breve{\xi}^2}{\gamma_0^2}) \Vert \phi_\mathrm{w}^{(2)}(t) \Vert_1.
\end{split}
\end{equation}
Also, by Cauchy-Schwarz inequality, it yields
\begin{equation}\label{eq.C_bound}
\begin{split}
\Vert C(t) \Vert_1 &\le \frac{2}{\gamma_0} \Vert \Theta_V[\varphi](t) \Vert_2 \cdot \Vert \varphi(t) \Vert_2 + \frac{2}{\gamma_0^2}\Vert \Theta^-_V[\varphi](t) \Vert_2 \cdot \Vert \Theta^+_V[\varphi](t) \Vert_2\\
&\le \frac{2K_V}{\gamma_0} \Vert \varphi(t) \Vert_2^2 + \frac{2\breve{\xi^2}}{\gamma_0^2} \Vert \varphi(t) \Vert_2^2.
\end{split}
\end{equation}

Next we turn to analyze the $L^1$-boundness of $\Delta \phi^{(2)}_\mathrm{w}(\bx, \bk, t)$, which satisfies the following renewal-type equation according to Eq.~\eqref{def.wpWBRW_renewal_eq}, 
\begin{equation}\label{eq.wpBRW_variation} 
\begin{split}
\Delta \phi^{(2)}_\mathrm{w}(\bx, \bk, t)  =& \me^{-\gamma_0(T-t)} \varphi^2_T(\bx(T-t),  \bk)  -\varphi^2(\bx, \bk, t) \\
&+ \int_{0}^{T-t} \D \mathcal{G}(\tau)( B_\mathrm{w}[\varphi^2](\bx(\tau), \bk, t+\tau) + C(\bx(\tau), \bk, t+\tau))\\
&+ \int_{0}^{T-t} \D \mathcal{G}(\tau) B_\mathrm{w}[\Delta \phi_\mathrm{w}^{(2)}](\bx(\tau), \bk, t+\tau).
\end{split}
\end{equation}

By integrating Eq.~\eqref{eq.wpBRW_variation} in $\mathbb{R}^n \times \mathbb{R}^n$ and using the triangular inequality, the extended Minkowski's inequality and Eq.~\eqref{eq.B_w_bound}, it has 
\begin{equation}\label{eq.variation_inequality}
\begin{split}
\Vert \Delta \phi_\mathrm{w}^{(2)}(t) \Vert_1 \le& ~\me^{-\gamma_0(T-t)}  \Vert \varphi_T \Vert^2_2 - \Vert \varphi(t) \Vert_2^2 +\int_t^T \D \mathcal{G}(t^{\prime}-t) \Vert C(t^{\prime}) \Vert_1\\
& +\int_t^T \D \mathcal{G}(t^{\prime}-t) ( \Vert  B_\mathrm{w}[\varphi^2](t^{\prime}) \Vert_1 + \Vert B_\mathrm{w}[\Delta \phi_\mathrm{w}^{(2)}](t^{\prime}) \Vert_1 ) \\
 \le&~\me^{-\gamma_0(T-t)}  \Vert \varphi_T \Vert^2_2 - \Vert \varphi(t) \Vert_2^2 +\int_t^T \D \mathcal{G}(t^{\prime}-t) \Vert C(t^{\prime}) \Vert_1\\
 &+ (1+\frac{2 \breve{\xi}^2}{\gamma_0^2}) \int_t^T \D \mathcal{G}(t^{\prime}-t) ( \Vert \Delta \phi_\mathrm{w}^{(2)}(t^{\prime}) \Vert_1 + \Vert \varphi(t^{\prime}) \Vert_2 ).
\end{split} 
\end{equation}

In addition, according to Eqs.~\eqref{eq.B_w_bound} and \eqref{eq.C_bound} and Lemma \ref{lemma.L2_estimate}, it yields
\begin{equation}\label{eq.C_L1_bound}
\begin{split}
\int_t^{T}\D \mathcal{G}(t^{\prime}-t) \Vert C(t^{\prime}) \Vert_1 &\le \frac{2K_V\gamma_0 +2\breve{\xi}^2}{\gamma_0} \int_t^T \me^{2K_V(T-t^{\prime})}  \me^{-\gamma_0(t^{\prime}-t)} \D t^{\prime} \cdot \Vert \varphi_T \Vert^2_2\\
& = \frac{(2K_V\gamma_0 +2\breve{\xi}^2)(\me^{2K_V(T-t)} - \me^{-\gamma_0(T-t)})}{(2K_V+\gamma_0)\gamma_0}   \Vert \varphi_T \Vert_2^2.
\end{split}
\end{equation}
%\begin{equation}\label{eq.B_L1_bound}
%\begin{split}
%\int_t^{T}\D \mathcal{G}(t^{\prime}-t) \Vert B_\mathrm{w}[\varphi^2] (t^{\prime}) \Vert_1 &\le \frac{ \gamma_0^2+2\breve{\xi}^2 }{\gamma_0} \int_t^T \me^{2 K_V(T-t^{\prime})} \me^{-\gamma_0(t^{\prime}-t)} \D t^{\prime} \cdot \Vert \varphi_T \Vert_2^2  \\
%& = \frac{\gamma_0^2+2\breve{\xi}^2}{(2K_V + \gamma_0)\gamma_0} (\me^{2K_V(T-t)} - \me^{-\gamma_0(T-t)}) \cdot \Vert \varphi_T \Vert_2^2.
%\end{split}
%\end{equation}

Let 
\begin{equation}
{u(t) = \frac{\Vert \Delta \phi_\mathrm{w}^{(2)}(t) \Vert_1 + \Vert \varphi(t) \Vert_2^2}{\Vert \varphi_T \Vert_2^2}},  \quad \alpha_0 = \frac{2K_V\gamma_0 + 2\breve{\xi}^2}{(2K_V+\gamma_0)\gamma_0},
\end{equation}
then Eq.~\eqref{eq.variation_inequality} is cast into
\begin{equation}
u(t) \le \alpha_0 \me^{2K_V(T-t)} + (1-\alpha_0) \me^{-\gamma_0(T-t)} + (1+\frac{2\breve{\xi}^2}{\gamma_0^2})\int_t^T \D \mathcal{G}(t^{\prime}-t) u(t^{\prime}).
\end{equation}
By using Lemma \ref{lemma.Gronwall_ineq}, it yields
\begin{equation}\label{key_estimate_ut}
u(t) \le \frac{K_V \gamma_0 + \breve{\xi}^2}{K_V \gamma_0 - \breve{\xi}^2} \me^{2K_V(T-t)} - \frac{2\breve{\xi}^2}{K_V\gamma_0 -\breve{\xi}^2} \me^{\frac{2\breve{\xi}^2}{\gamma_0}(T-t)}.
\end{equation}
Here we use the following relations
\begin{align*}
&(\gamma_0+\frac{2\breve{\xi}^2}{\gamma_0})\int_t^T \me^{2K_V(T-t^{\prime})} \me^{\frac{2\breve{\xi}^2}{\gamma_0}(t^{\prime}-t)}\D t^{\prime} = \frac{\gamma_0^2+2\breve{\xi}^2}{2K_V\gamma_0 - 2\breve{\xi}^2} (\me^{2K_V(T-t)} - \me^{\frac{2\breve{\xi}^2}{\gamma_0}(T-t)}),\\
&(\gamma_0+\frac{2\breve{\xi}^2}{\gamma_0})\int_t^T \me^{-\gamma_0(T-t^{\prime})} \me^{\frac{2\breve{\xi}^2}{\gamma_0}(t^{\prime}-t)}\D t^{\prime} = \me^{\frac{2\breve{\xi}^2}{\gamma_0}(T-t)} - \me^{-\gamma_0(T-t)}.
\end{align*}
Consequently, the $L^1$-boundedness of $\Vert \Delta \phi_\mathrm{w}^{(2)}(t) \Vert_1$ for $t \in [0, T]$ is obtained.
\begin{equation*}\label{eq.wp_variation_L1_bound}
\Vert \Delta \phi_\mathrm{w}^{(2)}(t) \Vert_1 \le \left(\frac{K_V \gamma_0 + \breve{\xi}^2}{K_V \gamma_0 - \breve{\xi}^2} \me^{2K_V(T-t)} - \frac{2\breve{\xi}^2}{K_V\gamma_0 -\breve{\xi}^2} \me^{\frac{2\breve{\xi}^2}{\gamma_0}(T-t)}\right) \Vert \varphi_T\Vert_2^2 - \Vert \varphi(t) \Vert_2^2.
\end{equation*}

Finally, when $K_V\gamma_0  > \breve{\xi}^2$, it has
\begin{equation*}\label{eq.wp_variation_L1_bound_2}
\begin{split}
\frac{\Vert \Delta \phi_\mathrm{w}^{(2)}(t) \Vert_1}{\Vert \varphi_T \Vert_2^2} &\le \me^{2K_V(T-t)} - \frac{\Vert \varphi(t) \Vert_2^2}{\Vert \varphi_T \Vert_2^2} + \frac{2\breve{\xi}^2}{K_V\gamma_0 - \breve{\xi}^2}\me^{2K_V(T-t)}(1-\me^{-2(K_V-\frac{\breve{\xi}^2}{\gamma_0})(T-t)}) \\
& \le \me^{2K_V(T-t)} - \frac{\Vert \varphi(t) \Vert_2^2}{{\Vert \varphi_T \Vert_2^2}} + \frac{4\breve{\xi}^2(T-t)}{\gamma_0} \me^{2K_V(T-t)},
\end{split}
\end{equation*}
so that the variance is governed by the leading term $\me^{2K_V(T-t)}$. In contrast, when $K_V\gamma_0  < \breve{\xi}^2$,
%\begin{equation}\label{eq.wp_variation_L1_bound_3}
\begin{equation*}
\begin{split}
\frac{\Vert \Delta \phi_\mathrm{w}^{(2)}(t) \Vert_1}{\Vert \varphi_T \Vert_2^2} &\le \me^{\frac{2\breve{\xi}^2}{\gamma_0}(T-t)} - \frac{\Vert \varphi(t) \Vert_2^2}{\Vert \varphi_T \Vert_2^2} + \frac{\breve{\xi}^2+K_V\gamma_0}{\breve{\xi}^2 - K_V\gamma_0 }\me^{\frac{2\breve{\xi}^2}{\gamma_0}(T-t)}(1-\me^{-2(\frac{\breve{\xi}^2}{\gamma_0}-K_V)(T-t)}) \\
& \le \me^{\frac{2\breve{\xi}^2}{\gamma_0}(T-t)} - \frac{\Vert \varphi(t) \Vert_2^2}{{\Vert \varphi_T \Vert_2^2}} + \frac{2(\breve{\xi}^2+K_V \gamma_0)(T-t)}{\gamma_0} \me^{\frac{2\breve{\xi}^2}{\gamma_0}(T-t)},
\end{split}
\end{equation*}
%\end{equation}
so that the variance is governed by the leading term $\me^{\frac{2\breve{\xi}^2}{\gamma_0}(T-t)}$ instead. In this way, we have completed the proof for the weighted-particle model.
\end{proof}

%By taking $\gamma_0 \to \infty$, Eq.~\eqref{eq.wp_variation_L1_bound_2} reduces to Eq.~\eqref{eq.wp_variation_asymptotic}.

The proof for the signed-particle situation is quite similar and we only need to outline the differences.

\begin{proof}[Proof of the second part of Theorem \ref{thm.variance_estimate}]
%Starting from the fact that
%\begin{equation}
%\mathrm{X}_t^\mathrm{s}(\omega)  = w_1\cdot X_{t+\tau_0}^s(\omega_1) - w_2\cdot X_{t+\tau_0}^s(\omega_2) + w_3\cdot X_{t+\tau_0}^s(\omega_3),
%\end{equation}
%a simple calculation yields
%\begin{equation}
%\begin{split}
%\mathsf{\Pi}^\mathrm{s}_{Q} \mone_{\mathrm{E}_t^c} \mathrm{X}_t^\mathrm{s}(\omega) =&\int_{\Omega \cap E_t^{c}} \frac{\xi(x(\tau_0), t+\tau_0)}{\gamma_0} \left\{ \int_{{\Omega_1}}  X_{t+\tau_0}^s(\omega_1)\Pi^\mathrm{s}_{Q_1}(\D \omega_1)\right\}\mathsf{\Pi}^\mathrm{s}_{Q}(\D \omega) \\
%&-\int_{\Omega \cap E_t^{c}} \frac{\xi(x(\tau_0), t+\tau_0)}{\gamma_0} \left\{ \int_{{\Omega_2}} X_{t+\tau_0}^s(\omega_2) \Pi^\mathrm{s}_{Q_2}(\D \omega_2)\right\}\mathsf{\Pi}^\mathrm{s}_{Q}(\D \omega) \\
%&+ \int_{\Omega \cap \tE^{c}}  \left\{ \int_{{\Omega_3}} X_{t+\tau_0}^s(\omega_3) \Pi^\mathrm{s}_{Q_3}(\D \omega_3)\right\}\mathsf{\Pi}^\mathrm{s}_{Q}(\D \omega).
%\end{split}
%\end{equation}
%Here we use Eq.~\eqref{def.probability_generation}. The remaining part is the same as in Eq.~\eqref{eq.second_RHS} and thus omitted here.

The first step is to derive the renewal-type equation for the second moment. Since
\begin{equation*}
\begin{split}
&\mathrm{\Pi}^\mathrm{s}_{Q} \mone_{\mathrm{E}_t^c} (\mathrm{X}_t^\mathrm{s})^2 = \sum_{i=1}^3  \int_{\mathrm{E}_t^c}  \left( w^2_i\int_{\Omega_i}  (\mathrm{X}_{t+\tau_0}^s)^2(\omega_i) \Pi^\mathrm{s}_{Q_i}(\D \omega_i) \right)\Pi_Q^\mathrm{s}(\D \omega)\\
&+ \sum_{i \ne j}  \int_{\mathrm{E}_t^c} \left((-1)^{i+j}w_i w_j  \int_{\Omega_i} \mathrm{X}^s_{t+\tau_0}(\omega_i) \Pi^\mathrm{s}_{Q_i}(\D \omega_i) \int_{\Omega_j} \mathrm{X}^s_{t+\tau_0} (\omega_j) \Pi^\mathrm{s}_{Q_j}(\D \omega_j)\right)\mathrm{\Pi}^\mathrm{s}_{Q}(\D \omega)
\end{split}
\end{equation*}
it's readily obtained that 
\begin{equation}
\begin{split}
\mathrm{\Pi}^\mathrm{s}_{Q} (\mathrm{X}_t^\mathrm{s})^2  = &  \me^{- \gamma_0(T-t)} \varphi^2_T(\bx(T-t), k) + \int_{0}^{T-t}  \D \fG(\tau_0) B_\mathrm{s}[\phi_\mathrm{s}^{(2)}](\bx(\tau_0), \bk, t+\tau_0)\\
&+ \int_{0}^{T-t}  \D \fG(\tau_0)  C(\bx(\tau_0), \bk, t+\tau_0), 
\end{split}
\end{equation}
Accordingly,  the operator $B_s$ is given by
\begin{equation}\label{eq.L1_bound_Bs}
B_\mathrm{s}[\phi_\mathrm{s}^{(2)}](\bx, \bk, t) = \frac{1}{\gamma_0}\Theta^-_V[\phi_\mathrm{s}^{(2)}](\bx, \bk , t) + \frac{1}{\gamma_0}\Theta^+_V[\phi_\mathrm{s}^{(2)}](\bx, \bk , t) + \phi_\mathrm{s}^{(2)}(\bx, \bk, t), 
\end{equation}
which is also a bounded operator from $L^1(\mathbb{R}^{n}\times \mathbb{R}^n)$ to itself,
\begin{equation}\label{eq.bound_Bs}
\Vert B_s [\phi_\mathrm{s}^{(2)}](t) \Vert_1 \le (1+\frac{2\breve{\xi}}{\gamma_0}) \Vert \phi_\mathrm{s}^{(2)}(t) \Vert_1.
\end{equation}

Second, the renewal-type equation for the variance $\Delta \phi_\mathrm{s}^{(2)}(\bx, \bk, t)$
\begin{equation}
\begin{split}
\Delta \phi^{(2)}_s(\bx, \bk, t)  =& ~\me^{-\gamma_0(T-t)} \varphi^2_T(\bx(T-t), k)  -\varphi^2(\bx, \bk, t) \\
&+ \int_{0}^{T-t} \D \mathcal{G}(\tau) ( B_\mathrm{s}[\varphi^2](\bx(\tau), \bk, t+\tau) + C(\bx(\tau), \bk, t+\tau) ) \\
&+ \int_{0}^{T-t} \D \mathcal{G}(\tau) B_\mathrm{s}[\Delta \phi_\mathrm{s}^{(2)}](\bx(\tau), \bk, t+\tau),
\end{split}
\end{equation}
then by the extended Minkowski's inequality and Eq.~\eqref{eq.L1_bound_Bs}, it yields.
\begin{equation}\label{eq.sp_variation_inequality}
\begin{split}
\Vert \Delta \phi_\mathrm{s}^{(2)}(t) \Vert_1 \le &~\me^{-\gamma_0(T-t)}  \Vert \varphi_T \Vert^2_2 -  \Vert \varphi(t) \Vert_2^2+\int_t^T \D \mathcal{G}(t^{\prime}-t) \Vert C(t^{\prime}) \Vert_1\\
 &+ (1+\frac{2 \breve{\xi}}{\gamma_0}) \int_t^T \D \mathcal{G}(t^{\prime}-t) ( \Vert \Delta \phi_\mathrm{s}^{(2)}(t^{\prime}) \Vert_1 + \Vert \varphi(t^{\prime}) \Vert_2 ).
\end{split} 
\end{equation}
By the Lemma \ref{lemma.Gronwall_ineq} and Eqs.~\eqref{eq.C_bound} and \eqref{eq.bound_Bs}, we obtain
\begin{equation*}
\begin{split}
\frac{\Vert \Delta \phi_\mathrm{s}^{(2)}(t) \Vert_1}{\Vert \varphi_T \Vert^2} +\frac{\Vert \varphi(t) \Vert_2^2}{\Vert \varphi_T \Vert_2^2} &\le \frac{\breve{\xi}\gamma_0 +\breve{\xi}^2}{\breve{\xi}\gamma_0 - K_V \gamma_0}\me^{2\breve{\xi}(T-t)} - \frac{\breve{\xi}^2+K_V \gamma_0}{\breve{\xi}\gamma_0 - K_V \gamma_0} \me^{2K_V(T-t)} \\
&\le \me^{2\breve{\xi}(T-t)} + \frac{\breve{\xi}^2 +K_V\gamma_0}{\breve{\xi}\gamma_0 - K_V\gamma_0} \me^{2\breve{\xi}(T-t)}(1 -\me^{-2(\breve{\xi} -K_V)(T-t)}) \\
&\le (1+2(K_V+\frac{\breve{\xi}^2}{\gamma_0})(T-t) )  \me^{2\breve{\xi}(T-t)},
\end{split}
\end{equation*}
{which completes the proof}.
\end{proof}

We are interested in the asymptotic behavior of the variances when $\gamma_0 \to \infty$. For the weighted-particle model, we have
\begin{equation} 
\Vert \Pi_Q^\mathrm{w} (\mathrm{X}_t^\mathrm{w} - \varphi(t))^2 \Vert_1 \lesssim \me^{2K_V (T-t)} \Vert \varphi_T \Vert_2^2 - \Vert \varphi(t) \Vert_2^2,
\end{equation}
so that the variance is uniformly bounded as $\gamma_0$ increases. Whereas for the signed-particle counterpart, 
\begin{equation}
\Vert  \Pi_Q^\mathrm{s} (\mathrm{X}_t^\mathrm{s} - \varphi(t))^2  \Vert_1\lesssim (1+2K_V (T-t))\me^{2\breve{\xi}(T-t)} \Vert \varphi_T \Vert_2^2 - \Vert \varphi(t) \Vert_2^2.
\end{equation}
The leading term is $\me^{2\breve{\xi}(T-t)}$, regardless of the choice of $\gamma_0$.

\begin{remark}
It notes that Eq.~\eqref{eq.PDO_plus_bound} holds directly when $V_W^+(\bx, \bk, t)$ is integrable with respect to $\bk$ for any $\bx \in \mathbb{R}^n$ and $t\in [0, T]$, owing to {Young's inequality}. In this situation, we only require $\varphi \in C([0, T], L^2(\mathbb{R}^n) \times L^2(\mathbb{R}^n))$.
\end{remark}

%\subsection{The inner product problem}

So far we have given the probabilistic interpretation of the mild solution of the backward Wigner equation by introducing  the stochastic process $(\mathrm{X}_t^\mathrm{w},\Pi_Q^\mathrm{w})$ and $(X_t^\mathrm{s}, \Pi^\mathrm{s}_Q)$ on $\mathscr{B}_{\Omega}$ for a given initial state $(\bx, \bk)$. The probabilistic interpretation of the weak solution of the Wigner equation \eqref{eq.Wigner} can be constructed by an extension of the probability spaces $(\Omega, \mathscr{B}_{\Omega}, \Pi^w_Q)$ and $(\Omega, \mathscr{B}_{\Omega}, \Pi^s_Q)$,
\begin{equation}
\hat{\Omega} = \mathbb{R}^n \times \mathbb{R}^n \times \Omega, ~~ \hat{\mathscr{B}}_{\Omega} = \mathsf{R}^n \otimes \mathsf{R}^n \otimes \mathscr{B}_{\Omega}, ~~ \Pi^\mathrm{w} = \lambda_0 \otimes\Pi_Q^\mathrm{w}, ~~ \Pi^\mathrm{s} = \lambda_0 \otimes\Pi_Q^\mathrm{s}
 \end{equation}
where $\mathsf{R}^n \otimes \mathsf{R}^n \otimes \mathscr{B}_{\Omega}$ is the product Borel extension of $\hat{\Omega}$ and the probability measure $\lambda_0$ is given by 
\begin{equation}
\D \lambda_0 = f_I(\bx, \bk) \D \bx \D \bk, \quad f_I = |f_0|/{\Vert f_0 \Vert_1},
\end{equation}
and
\begin{align}
\Pi^\mathrm{w}  X^\mathrm{w}_t = \lambda_0 \otimes\Pi_Q^\mathrm{w}(X^\mathrm{w}_t) &= \int_{\mathbb{R}^{n} \times \mathbb{R}^{n}}  f_I(\bx, \bk) \left(\int_{\Omega} X^\mathrm{w}_t(\omega)\Pi_Q^\mathrm{w} (\D \omega)\right)\D \bx \D \bk,\\
\Pi^\mathrm{s}  X^\mathrm{s}_t = \lambda_0 \otimes\Pi_Q^\mathrm{s}(X^\mathrm{s}_t) &= \int_{\mathbb{R}^{n} \times \mathbb{R}^{n}} f_I(\bx, \bk) \left(\int_{\Omega} X^\mathrm{s}_t(\omega)\Pi_Q^\mathrm{s} (\D \omega)\right) \D \bx \D \bk.
\end{align}
Thus the inner product $\langle \varphi_T, f_T \rangle = \langle \varphi_0, f_0 \rangle$ can be represented by
\begin{equation}
\langle \varphi_T, f_T \rangle ={\Pi}^\mathrm{w} (s \cdot X^\mathrm{w}_0) = {\Pi}^\mathrm{s} (s \cdot X^\mathrm{s}_0),
\end{equation}
where $s$ is short for the particle sign function
\begin{equation}
s(\bx, \bk) =  f_0(\bx, \bk)/f_I(\bx, \bk).
\end{equation}

According to Theorem \ref{thm.variance_estimate}, it's readily to obtain the variance estimation for the inner product problem.
\begin{definition}
The variances of {$\Pi^\mathrm{w} (s \cdot X)$ and $\Pi^\mathrm{s}(s \cdot X)$} are defined by that
\begin{equation}
\textup{Var}\left({\Pi}^\mathrm{w} (s \cdot \mathrm{X}_t^\mathrm{w})\right) = \lambda_0 \otimes\Pi_Q^\mathrm{w} (s\cdot \mathrm{X}_t^\mathrm{w} - \lambda_0 \otimes\Pi_Q^\mathrm{w}(s\cdot \mathrm{X}_t^\mathrm{w}))^2
\end{equation}
and 
\begin{equation}
\textup{Var}\left({\Pi}^\mathrm{s} (s \cdot \mathrm{X}_t^\mathrm{s})\right) = \lambda_0 \otimes\Pi_Q^\mathrm{s}(s\cdot \mathrm{X}_t^\mathrm{s} - \lambda_0 \otimes\Pi_Q^\mathrm{s}(s\cdot \mathrm{X}_t^\mathrm{s}))^2,
\end{equation}
respectively.
\end{definition}

Now we prove the bounds of the variance for both the weighted-particle model and the signed-particle model.
\begin{theorem}[Variance estimation for the inner product problem]\label{thm.variance_estimation}
Suppose $\Vert f_I \Vert_\infty < \infty$ and there exists a positive constant $M_s>0$ such that $s \le M_s$ holds almost surely in $\mathbb{R}^{2n}$, then for the weighted-particle model,
\begin{equation}\label{eq.wp_variance_estimation}
\textup{Var}\left({\Pi}^\mathrm{w} (s \cdot \mathrm{X}_t^\mathrm{w})\right) \le 2 M_s^2 \Vert f_I \Vert_\infty (1 + (K_V+\frac{\breve{\xi}^2}{\gamma_0})(T-t))\me^{2\max(K_V, \frac{\breve{\xi}^2}{\gamma_0})(T-t)} \Vert \varphi_T \Vert_2^2,
\end{equation} 
and for the signed-particle model, it has that
\begin{equation}\label{eq.sp_variance_estimation}
\textup{Var}\left({\Pi}^\mathrm{s} (s \cdot \mathrm{X}_t^\mathrm{w})\right) \le 2M_s^2 \Vert f_I \Vert_\infty (1+ (K_V+\frac{\breve{\xi}^2}{\gamma_0})(T-t))\me^{2\breve{\xi}(T-t)} \Vert \varphi_T \Vert_2^2.
\end{equation} 
\end{theorem}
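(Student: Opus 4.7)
The plan is to reduce the variance of $s \cdot \mathrm{X}_t^\mathrm{w}$ (and analogously of $s \cdot \mathrm{X}_t^\mathrm{s}$) under the extended measure $\Pi^\mathrm{w} = \lambda_0 \otimes \Pi_Q^\mathrm{w}$ to the per-state second moment $\phi_\mathrm{w}^{(2)}$ that is already controlled by Theorem \ref{thm.variance_estimate}. The three key ingredients are the pointwise bound $s^2 \le M_s^2$ (holding almost surely under the hypothesis of the theorem), Fubini on the product space $(\hat{\Omega}, \hat{\mathscr{B}}_\Omega)$, and the uniform bound $\|f_I\|_\infty < \infty$.

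First I would drop the squared mean via $\textup{Var}(\Pi^\mathrm{w}(s \cdot \mathrm{X}_t^\mathrm{w})) \le \Pi^\mathrm{w}[(s \cdot \mathrm{X}_t^\mathrm{w})^2]$. Since $s$ depends only on the initial state $Q = (\bx, \bk)$, Fubini gives
\begin{equation*}
\Pi^\mathrm{w}\bigl[(s \cdot \mathrm{X}_t^\mathrm{w})^2\bigr] = \int_{\mathbb{R}^{2n}} f_I(\bx, \bk)\, s^2(\bx, \bk)\, \Pi_Q^\mathrm{w}\bigl[(\mathrm{X}_t^\mathrm{w})^2\bigr]\, \D\bx\, \D\bk.
\end{equation*}
Using $s^2 \le M_s^2$ almost surely together with $f_I \le \|f_I\|_\infty$, the right-hand side is majorized by $M_s^2 \|f_I\|_\infty \|\phi_\mathrm{w}^{(2)}(t)\|_1$, where $\phi_\mathrm{w}^{(2)}(\bx, \bk, t) = \Pi_Q^\mathrm{w}[(\mathrm{X}_t^\mathrm{w})^2]$ is the second moment from Definition \ref{def:mom}.

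Next, decomposing $\phi_\mathrm{w}^{(2)} = \varphi^2 + \Delta \phi_\mathrm{w}^{(2)}$ (using $\phi_\mathrm{w}^{(1)} = \varphi$ from \eqref{eq.expection_X_t}) and invoking \eqref{eq.wbrw_wp_variance_estimate} of Theorem \ref{thm.variance_estimate} yields
\begin{equation*}
\|\phi_\mathrm{w}^{(2)}(t)\|_1 \le \Bigl(1 + \frac{\gamma_1}{\gamma_0}(T-t)\Bigr)\, \me^{2 \max(K_V,\, \breve{\xi}^2/\gamma_0)(T-t)}\, \|\varphi_T\|_2^2.
\end{equation*}
Substituting $\gamma_1/\gamma_0 = 2(K_V + \breve{\xi}^2/\gamma_0)$ and applying the elementary inequality $1 + 2a \le 2(1+a)$ for $a \ge 0$ produces exactly the coefficient $2M_s^2 \|f_I\|_\infty \bigl(1 + (K_V + \breve{\xi}^2/\gamma_0)(T-t)\bigr)$ appearing in \eqref{eq.wp_variance_estimation}. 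The signed-particle estimate \eqref{eq.sp_variance_estimation} follows from the identical template, with the exponential rate $2 \max(K_V, \breve{\xi}^2/\gamma_0)$ replaced by $2 \breve{\xi}$ via the companion bound \eqref{eq.wbrw_sp_variance_estimate}.

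I do not anticipate a substantive obstacle: the statement is essentially a corollary of Theorem \ref{thm.variance_estimate} once one uses the $Q$-measurability and uniform boundedness of $s$. The only points that need a line of justification are the measurability/Fubini step on $(\hat{\Omega}, \hat{\mathscr{B}}_\Omega, \lambda_0 \otimes \Pi_Q)$ and the rearrangement $\gamma_1/\gamma_0 \to 2(K_V + \breve{\xi}^2/\gamma_0)$ that recovers the announced constants.
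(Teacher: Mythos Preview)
Your proposal is correct and follows essentially the same route as the paper: drop the squared mean, apply Fubini on the product space, use the pointwise bounds $s^2\le M_s^2$ and $f_I\le\|f_I\|_\infty$ to reduce to $M_s^2\|f_I\|_\infty\|\phi_\mathrm{w}^{(2)}(t)\|_1$, and then invoke Theorem~\ref{thm.variance_estimate}. The only cosmetic difference is that the paper splits $\phi_\mathrm{w}^{(2)}=\varphi^2+\Delta\phi_\mathrm{w}^{(2)}$ \emph{before} applying the pointwise bounds and then controls $\|\varphi(t)\|_2^2$ separately via Lemma~\ref{lemma.L2_estimate}, whereas you let the $-\|\varphi(t)\|_2^2$ term in \eqref{eq.wbrw_wp_variance_estimate} cancel it directly; your bookkeeping is in fact slightly cleaner.
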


\begin{proof}
It starts by a direct calculation
\begin{equation*}
\textup{Var}\left({\Pi}^\mathrm{w} (s \cdot \mathrm{X}_t^\mathrm{w})\right)  =  \lambda_0 \otimes\Pi_Q^\mathrm{w} (s^2 \cdot (\mathrm{X}_t^\mathrm{w})^2) - (\lambda_0 \otimes\Pi_Q^\mathrm{w}(s \cdot \mathrm{X}_t^\mathrm{w}))^2.
\end{equation*}
The first term reads
\begin{equation*}
\begin{split}
\lambda_0 \otimes\Pi_Q^\mathrm{w}(s^2 \cdot (\mathrm{X}_t^\mathrm{w})^2) = & \int_{\mathbb{R}^{2n}}  \frac{f_0^2(\bx, \bk)}{f_I(\bx, \bk)} \left(\int_{\Omega} (\mathrm{X}_t^\mathrm{w})^2(\omega)\mathrm{\Pi}^\mathrm{w}_{Q}(\D \omega) - \varphi^2(\bx, \bk, t) \right) \D \bx \D \bk \\
&+  \int_{\mathbb{R}^{2n} } \frac{f_0^2(\bx, \bk) }{f_I(\bx, \bk) } \varphi^2(\bx, \bk, t) \D \bx \D \bk,
\end{split}
\end{equation*}
and the second term is
\begin{equation*}
\lambda_0\otimes\Pi_Q^\mathrm{w}(s \cdot \mathrm{X}_t^\mathrm{w}) = \int_{\mathbb{R}^{n} \times \mathbb{R}^{n}} f_0(\bx, \bk) \varphi(\bx, \bk, t) \D \bx \D \bk.
\end{equation*}

Due to {H\"{o}lder's inequality}, it has that
\begin{equation*}
 \int_{\mathbb{R}^{n} \times \mathbb{R}^{n}} \frac{f_0^2(\bx, \bk)}{f_I(\bx, \bk) } \varphi^2(\bx, \bk, t) \D \bx \D \bk \le M_s^2 \Vert f_I \Vert_\infty \Vert \varphi(t) \Vert_2^2 \le M_s^2 \Vert f_I \Vert_\infty  (\me^{2K_V (T-t)} \Vert \varphi_T \Vert_2^2),
 \end{equation*}
so that 
\begin{equation*}
\textup{Var}\left({\Pi}^\mathrm{w} (s \cdot \mathrm{X}_t^\mathrm{w})\right) \le M_s^2 \Vert f_I \Vert_\infty  (\Vert {\Delta \phi_\mathrm{w}^{(2)}(t)} \Vert_1 + \me^{2K_V (T-t)} \Vert \varphi_T \Vert_2^2).
\end{equation*}
Thus according to Theorem \ref{thm.variance_estimation}, it yields Eq.~\eqref{eq.wp_variance_estimation}. 

{The proof} of Eq.~\eqref{eq.sp_variance_estimation} is similar and omitted for brevity.
\end{proof}

Theorem \ref{thm.variance_estimation} points out the {``numerical sign problem''} in the stochastic Wigner simulation \cite{SellierNedjalkovDimov2014, ShaoXiong2019, MuscatoWagner2016,XiongShao2019}, namely, the negative weights induced by $V_W = V_W^+ - V_W^-$ results in the exponential growth of the statistical errors, as well as the simulation cost along with the growth of particle number. However, the weighted-particle implementation allows the reduction of variance by increasing the parameter $\gamma_0$, thereby ameliorating the ``sign problem''. By contrast, the signed-particle implementation, although {with lower computational costs}, actually sacrifices the accuracy to some extent.

\section{The WBRW-SPA model}
\label{sec:WBRW-SPA}

Until now we have analyzed a class of branching random walk models based on HJD \eqref{eq.PDO_splitting}. The potential weakness of HJD lies in the fact that the near-cancelation of positive and negative parts of the oscillatory integral is totally neglected, leading to a rapid growth of variance in the related stochastic models. In this section, we try to formulate a new class of branching random walk models based on $\Theta^{\lambda_0}_V$, instead of $\Theta_V$. Intuitively speaking, we would like to {capture} the major contribution from the leading term of  the asymptotic expansion and throw away the high-order terms that may contribute less to the oscillatory integrals. The main result of this section is presented in Theorem \ref{SPA_thm.variance_estimate}. It is presented that the resulting stochastic processes gain a substantial {reduction} in variance, at the cost of introducing some biases.

According to Eqs.~\eqref{def.critical_point}, it is readily to verify that $\sigma_+(x) = -\sigma_-(x)$. Therefore, as $\psi(\bk, t) = \psi^\ast(-\bk, t)$, it has that $\psi(r\sigma_+, t) =\psi^\ast(r\sigma_-, t)$ and then the imaginary part of the stationary phase approximation vanishes, say, 
\begin{equation}\label{eq.PDO_real_property}
\begin{split}
&\Lambda_+^{>\lambda_0}[\varphi](\bx, \bk, t) +\Lambda_-^{>\lambda_0}[\varphi](\bx, \bk, t) \\
&= 2\int_{{\lambda_0}/{|\bz(\bx)|}}^{+\infty} \textup{Im}\left[\me^{ \mi r |\bz(\bx)|} \left(\frac{2\pi}{\mi r |\bz(\bx)|}\right)^{\frac{n-1}{2}} r^{n-1} \psi(r\sigma_+, t)\right] \Delta_{r\sigma_+} [\varphi](\bx, \bk, t) \D r \\
& = 2\int_{{\lambda_0}/{|\bz(\bx)|}}^{+\infty} \textup{Im}\left[ \zeta(r, \bx, t)\right] \cdot  r^{n-1}\Psi(r)\chi_{0, R}(r) \cdot \Delta_{r\sigma_+} [\varphi](\bx, \bk, t) \D r,
\end{split}
\end{equation}
where $\textup{Im}[z]$ denotes the imaginary part of $z$ and $\zeta(r, \bx, t)$ is given by that
\begin{equation}
\zeta(r, \bx, t) =  \me^{\mi r |\bz(\bx)|} \left(\frac{2\pi}{ \mi r |\bz(\bx)|}\right)^{\frac{n-1}{2}}  \frac{\psi(r\sigma_+, t)}{\Psi(r)}.
\end{equation}
Under the assumption {({\textbf{A3}})}, we have that
\begin{equation}
\breve{\eta} = \int_{0}^{+\infty} r^{n-1} \Psi(r) \chi_{0, R}(r) \D r  < \infty.
\end{equation}
%and $\eta(\bx, t)$ has a uniform upper bound $\breve{\eta}$ for $(\bx, t) \in \mathbb{R}^n \times [0, T]$
%\begin{equation}
%\breve{\eta} = \max_{t\in [0, T]} \max_{\bx \in \mathbb{R}^n} \eta(\bx, t). 
%\end{equation}

The corresponding probability laws $\mathsf{\Pi}^\mathrm{w}_{Q}$ and $\mathsf{\Pi}^\mathrm{s}_{Q}$ are characterized in a similar pattern as in Eq.~\eqref{def.probability_law}, with an alternative setting of the transition density $\tilde{\mathrm{K}}_{i_1 \cdots i_m}^{t^{\prime}, \bx^{\prime}}(\bk; \bk^{\prime})$ and the particle weight $w_{i_1 \cdots i_m}$.  The definition of  $\tilde{\mathrm{K}}_{i_1 \cdots i_m}^{t^{\prime}, \bx^{\prime}}(\bk; \bk^{\prime})$ is given by that 
%{\cf
%\begin{equation}
%\tilde{\mathrm{K}}_{i_1 \cdots i_m}^{t^{\prime}, \bx^{\prime}}(\bk; \bk^{\prime}) = \left\{
%\begin{split}
%&\frac{V_W^-(x^{\prime}, k - \bk^{\prime}, t^{\prime}) \chi(\varepsilon |k - \bk^{\prime}|)}{\xi(x^{\prime}, t^{\prime})}, &\quad i_m = 1, \\
%&\frac{V_W^+(x^{\prime}, k - \bk^{\prime}, t^{\prime}) \chi(\varepsilon |k - \bk^{\prime}|)}{\xi(x^{\prime}, t^{\prime})}, &\quad i_m = 2,\\
%&\frac{|\Psi(r, \sigma_+(x^{\prime}), t^{\prime})| \chi(\varepsilon r)}{ \eta(x^{\prime}, t^{\prime})} \cdot \delta (\bk^{\prime} - k - \frac{1}{2}r\sigma_+(x^{\prime}) ), &\quad i_m = 3,\\ 
%&\frac{|\Psi(r, \sigma_+(x^{\prime}), t^{\prime})| \chi(\varepsilon r)}{ \eta(x^{\prime}, t^{\prime})} \cdot \delta (\bk^{\prime} - k + \frac{1}{2}r\sigma_+(x^{\prime}) ) , &\quad i_m = 4,\\
%& \delta(k - \bk^{\prime}), &\quad i_m = 5.
%\end{split}
%\right.
%\end{equation}
%}
\begin{equation}
\tilde{\mathrm{K}}_{i_1 \cdots i_m}^{t^{\prime}, \bx^{\prime}}(\bk; \bk^{\prime}) =  \left\{
\begin{split}
&\frac{V_{W, R}^+(\bx^{\prime}, (-1)^{i_m}(\bk - \bk^{\prime}), t^{\prime})}{\xi(\bx^{\prime}, t^{\prime})}, &\quad &i_m = 1,2,\\
&\frac{r^{n-1} \Psi(r) \chi_{0, R}(r)}{\breve{\eta}} \cdot \delta(\sigma - \sigma_+(\bx^{\prime})) \Big |_{\bk = \bk^{\prime} + \frac{(-1)^{i_m}}{2}r\sigma } , &\quad &i_m = 3, 4, \\
& \delta(\bk - \bk^{\prime}), &\quad &i_m = 5,
\end{split}
\right.
\end{equation}
in the sense that for $i_m = 3$ or $4$, 
\begin{equation*}
\begin{split}
&\Pr(\bk_{i_1 \cdots i_m} - \bk_{i_1 \cdots i_{m-1}}\in \frac{\mathcal{K}_{i_m}}{2})  = \int_{\mathcal{K}_{i_m}} \D \bk_{i_1 \cdots i_m} ~ \tilde{\mathrm{K}}^{i_1 \cdots i_m}_{t_{i_1 \cdots i_m}, \bx_{i_1 \cdots i_m}}(\bk_{i_1 \cdots i_{m}}; \bk_{i_1 \cdots i_{m-1}}) \\
& = \int_{E_{i_m}} \frac{r_{i_1 \cdots i_m}^{n-1}\Psi(r_{i_1 \cdots i_m}) \chi_{0, R}(r_{i_1 \cdots i_m})}{ \breve{\eta}}  \cdot \delta(\sigma_{i_1 \cdots i_m} - \sigma_+(\bx_{i_1 \cdots i_m})) \D r_{i_1 \cdots i_m} \D \sigma_{i_1 \cdots i_m}, 
\end{split}
\end{equation*}
where $E_{i_m}$ is the conversion of $\mathcal{K}_{i_m}$ under the spherical coordinate. 

For brevity, we adopt the following notations: $r_{i_1\cdots i_m} = 2 \left| \bk_{i_1\cdots i_m} - \bk_{i_1 \cdots i_{m-1}} \right| $, $\zeta_{i_1 \cdots i_m} = \zeta( r_{i_1\cdots i_m}, \bx_{i_1\cdots i_m}, t_{i_1\cdots i_m})$ and $\bz_{i_1 \cdots i_m} = \bz(\bx_{i_1 \cdots i_m})$. Let $\sigma_{i_1 \cdots i_m}$ be
\begin{equation}
\sigma_{i_1 \cdots i_m} = \left\{
\begin{split}
&\frac{\xi(\bx_{i_1\cdots i_m}, t_{i_1\cdots i_m})}{\gamma_0} \cdot \mone_{\{r_{i_1\cdots i_m} \le \frac{2 \lambda_0}{|\bz_{i_1\cdots i_m}|}\}}, &i_m = 1, 2,\\
&\frac{2\breve{\eta}}{\gamma_0} \cdot \big| \textup{Im} \left[ \zeta_{i_1 \cdots i_m} \right] \big|  \cdot \mone_{\{r_{i_1\cdots i_m} > \frac{\lambda_0}{|\bz_{i_1\cdots i_m}|}\}}, &i_m = 3, 4. 
\end{split}
\right.
\end{equation}
Then the particle weight $w_{i_1\cdots i_m}$ reads that
\begin{itemize}

\item[(1)] For the weighted particle model, 
\begin{equation}\label{def_wbrw_spa_w}
w_{i_1\cdots i_m} = \left\{
\begin{split}
&\sigma_{i_1 \cdots i_m} \cdot  \mone_{\mathcal{K}}(\bk_{i_1\cdots i_m}),  \quad &i_m &= 1, 2,\\
&\sigma_{i_1 \cdots i_m} \cdot \textup{sgn}\left(\textup{Im}[\zeta_{i_1\cdots i_m}]\right)  \cdot \mone_{\mathcal{K}}(\bk_{i_1\cdots i_m}), \quad &i_m &= 3, 4, \\
&1,  \quad &i_m &= 5.
\end{split}
\right.
\end{equation}

\item[(2)] For the signed particle model, 
\begin{equation}\label{def_wbrw_spa_s}
w_{i_1\cdots i_m} = \left\{
\begin{split}
&1,  &\textup{with} ~\Pr = \sigma_{i_1 \cdots i_m} \cdot \mone_{\mathcal{K}}(\bk_{i_1\cdots i_m}),  ~~i_m = 1, 2,\\
&\textup{sgn}\left(\textup{Im}[\zeta_{i_1\cdots i_m}]\right), &\textup{with} ~\Pr = \sigma_{i_1 \cdots i_m} \cdot \mone_{\mathcal{K}}(\bk_{i_1\cdots i_m}), ~~ i_m = 3, 4, \\
&0,   &\textup{otherwise} ,
\end{split}
\right.
\end{equation}
and $w_{i_1\cdots i_m} = 1$ for $i_m = 5$.

\end{itemize}
Here $\textup{sgn}(x) = 1$ for $x > 0$, $\textup{sgn}(x) = -1$ for $x < 0$ and $\textup{sgn}(x) = 0$ for $x = 0$.

\begin{definition}\label{def.stochastic_process_SPA}
Suppose $(\bx_i, \bk_i)$ is the starting state of a frozen particle $i$ in a given family history $\omega$, and let $\delta_{(\bx, \bk)}$ be the Dirac measure concentrated at  state $(\bx, \bk)$. Then the weighted-particle WBRW-SPA is given by that
\begin{equation}\label{def.wpWBRW_SPA}
\mathrm{Y}_t^\mathrm{w}(\omega) = \langle \varphi_T, \sum_{i \in \mathcal{E}_t(\omega)} \tilde{w}_{i} \cdot \delta_{(\bx_{i}(T-t_i), \bk_{i})} \rangle =  \sum_{i \in \mathcal{E}_t(\omega)} \tilde{w}_i \cdot \varphi_T(\bx_i(T-t_i), \bk_i),
\end{equation}
the cumulative weight $\tilde{w}_i \in [-1, 1]$ for $i = \langle i_1 i_2 \cdots i_n \rangle$ is defined by the product of the particle weights $w_{i_1\cdots i_m}$ 
\begin{equation}
\tilde{w}_{i} = \prod_{m=1}^n  (-1)^{i_m +1} w_{i_1\cdots i_m} , \quad |w_{i_1\cdots i_m}| \le 1,
\end{equation}
where $w_{i_1\cdots i_m}$ are defined in Eq.~\eqref{def_wbrw_spa_w}.

Similarly, the signed-particle WBRW-SPA is given by that
\begin{equation}\label{def.spWBRW_SPA}
\mathrm{Y}_t^\mathrm{s}(\omega) = \langle \varphi_T, \sum_{i \in \mathcal{E}_t(\omega)} \tilde{s}_{i} \cdot \delta_{(\bx_{i}(T-t_i), \bk_{i})} \rangle =  \sum_{i \in \mathcal{E}_t(\omega)} \tilde{s}_i \cdot \varphi_T(\bx_i(T-t_i), \bk_i),
\end{equation}
where cumulative weight $\tilde{s}_i \in \{-1, 0, 1\}$ for $i = \langle i_1 i_2 \cdots i_n \rangle$ is defined by the product of the particle weight $w_{i_1\cdots i_m}$,
\begin{equation}
\tilde{s}_i =  \prod_{m=1}^n (-1)^{i_m+1} w_{i_1\cdots i_m}, \quad w_{i_1\cdots i_m} = \{-1,0, 1\},
\end{equation}
where $w_{i_1\cdots i_m}$ are defined in Eq.~\eqref{def_wbrw_spa_s}.

\end{definition}

\begin{definition}
The first moments of $\mathrm{Y}_t^\mathrm{w}$ and $\mathrm{Y}_t^\mathrm{s}$ are denoted by
\begin{equation}
\Phi_\mathrm{w}^{(1)}(\bx, \bk, t) = \mathsf{\Pi}^\mathrm{w}_{Q} \mathrm{Y}_t^\mathrm{w}, \quad \Phi_\mathrm{s}^{(1)}(\bx, \bk, t) = \mathsf{\Pi}^\mathrm{s}_{Q} \mathrm{Y}_t^\mathrm{s}, 
\end{equation}
and the second moments are
\begin{equation}
\Phi_\mathrm{w}^{(2)}(\bx, \bk, t) = \mathsf{\Pi}^\mathrm{w}_{Q} (\mathrm{Y}_t^\mathrm{w})^2, \quad \Phi_\mathrm{s}^{(2)}(\bx, \bk, t) = \mathsf{\Pi}^\mathrm{s}_{Q} (\mathrm{Y}_t^\mathrm{s})^2, 
\end{equation}
In addition, the variances are defined as
\begin{align}\label{def.Yt_second_moment}
& \Delta \Phi^{(2)}_\mathrm{w}(\bx, \bk, t)=  \mathsf{\Pi}^\mathrm{w}_{Q} (\mathrm{Y}_t^\mathrm{w} - \Phi_\mathrm{w}^{(1)}(\bx, \bk, t))^2,\\
& \Delta \Phi^{(2)}_\mathrm{s}(\bx, \bk, t) =  \mathsf{\Pi}^\mathrm{s}_{Q} (\mathrm{Y}_t^\mathrm{s} -{\Phi_\mathrm{s}^{(1)}(\bx, \bk, t))^2}.
\end{align}
\end{definition}

Now we sketch the proof of Theorem \ref{SPA_thm.variance_estimate}. The first step is to prove that both $\Phi_\mathrm{w}^{(1)}(\bx, \bk, t)$ and $\Phi_\mathrm{s}^{(1)}(\bx, \bk, t)$ are solutions of the modified backward Wigner equation,
\begin{equation}\label{eq.modified_backward_Wigner}
\left\{
\begin{split}
&\frac{\partial }{\partial t}\varphi_{\lambda_0}(\bx, \bk, t) + \frac{\hbar \bk}{m} \cdot \nabla_{\bx} \varphi_{\lambda_0}(\bx, \bk, t) = \Theta^{\lambda_0}_{V}\left[\varphi_{\lambda_0}\right](\bx, \bk, t), ~~t \le T,\\
& \varphi_{\lambda_0}(\bx, \bk, T) = \varphi_T(\bx, \bk).
\end{split}
\right.
\end{equation}
The second step is to estimate the variances of the resulting stochastic models. Finally, by comparing $\varphi$ and $\varphi_{\lambda_0}$, we arrive at the final result. 

\begin{theorem}[The first moments of WBRW-SPA]\label{thm_first_moment_wbrw_spa}
Suppose the assumptions ${\bf (A3)}$ are satisfied. Then for a fixed $\lambda_0$, there exists $\varphi_{\lambda_0} \in C([0, T],  L^2(\mathbb{R}^n) \times L^2_0(\mathbb{R}^n))$ such that
\begin{equation}
\Phi_\mathrm{w}^{(1)}(\bx, \bk, t) = \Phi_\mathrm{w}^{(2)}(\bx, \bk, t) = \varphi_{\lambda_0}(\bx, \bk, t),
\end{equation}
and $\varphi_{\lambda_0}$ satisfies the following {estimate:}
\begin{equation}\label{spa_bias_estimate}
\Vert \varphi(t) - \varphi_{\lambda_0}(t) \Vert_2 \le C \lambda_0^{-{n}/{2}} \max_{t \in [0, T]} \Vert \varphi(t) \Vert_{L_{\bx}^2 \times H_{\bk}^1}.
\end{equation}
\end{theorem}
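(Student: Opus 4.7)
The plan is twofold: (i) derive a renewal-type integral equation jointly satisfied by $\Phi_\mathrm{w}^{(1)}$ and $\Phi_\mathrm{s}^{(1)}$ and identify it with the mild formulation of \eqref{eq.modified_backward_Wigner}; then (ii) control $\varphi-\varphi_{\lambda_0}$ via Theorem~\ref{thm.stationary_phase_approximation} combined with a Gr\"onwall argument.

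For step (i), I would condition on the first life-length $\tau_0$ and invoke the Markov property analogous to \eqref{def.Markov_property}. Using the transition density $\tilde{\mathrm{K}}_{i_1\cdots i_m}^{t',\bx'}$ together with the weights \eqref{def_wbrw_spa_w}--\eqref{def_wbrw_spa_s}, both first moments should satisfy
\begin{equation*}
\Phi(\bx,\bk,t) = \me^{-\gamma_0(T-t)}\varphi_T(\bx(T-t),\bk) + \int_t^T \D\mathcal{G}(t'-t)\Bigl(-\tfrac{1}{\gamma_0}\Theta_V^{\lambda_0}[\Phi](\bx(t'-t),\bk,t') + \Phi(\bx(t'-t),\bk,t')\Bigr)\D t',
\end{equation*}
which is exactly the mild form of \eqref{eq.modified_backward_Wigner} (cf.\ Section~\ref{sec:pre}). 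The five branches realize the decomposition \eqref{def.stationary_phase_approximation} piece by piece: for $i_m=1,2$ the alternating sign $(-1)^{i_m+1}$ combined with the truncation on $r_{i_1\cdots i_m}$ reproduces, via Hahn--Jordan, the low-frequency contribution $-\Lambda^{<\lambda_0}[\Phi]/\gamma_0$; for $i_m=3,4$ the density $r^{n-1}\Psi(r)\chi_{0,R}(r)/\breve{\eta}$ concentrated on $\sigma=\sigma_+(\bx)$, together with the weight $(2\breve{\eta}/\gamma_0)\,|\textup{Im}[\zeta]|\,\textup{sgn}(\textup{Im}[\zeta])$ and the sign $(-1)^{i_m+1}$, recovers the real-valued high-frequency representation \eqref{eq.PDO_real_property} of $-(\Lambda_+^{>\lambda_0}+\Lambda_-^{>\lambda_0})[\Phi]/\gamma_0$; the inert branch $i_m=5$ supplies the $+\Phi$ term. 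The weighted and signed implementations yield the same expectation because the signed weight is the Bernoulli realization of the deterministic weight. Uniqueness of the mild solution in $C([0,T],L^2(\mathbb{R}^{2n}))$---which follows from $L^2$-boundedness of $\Theta_V^{\lambda_0}$ (proved as in Section~\ref{sec:spa} applied separately to $\Lambda^{<\lambda_0}$ and to $\Lambda_\pm^{>\lambda_0}$) combined with Lemma~\ref{lemma.L2_estimate}---then gives $\Phi_\mathrm{w}^{(1)}=\Phi_\mathrm{s}^{(1)}=\varphi_{\lambda_0}$.

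For step (ii), setting $e(t)=\varphi_{\lambda_0}(t)-\varphi(t)$, one has the terminal-value problem
\begin{equation*}
\bigl(\partial_t+\tfrac{\hbar\bk}{m}\cdot\nabla_{\bx}\bigr)e(t) = \Theta_V^{\lambda_0}[e](t) + (\Theta_V^{\lambda_0}-\Theta_V)[\varphi](t), \qquad e(T)=0.
\end{equation*}
Theorem~\ref{thm.stationary_phase_approximation} bounds the inhomogeneity by $C\lambda_0^{-n/2}\Vert\varphi(t)\Vert_{L^2_\bx\times H^1_\bk}$. Writing the corresponding Duhamel representation as in Section~\ref{sec:pre}, taking $L^2$-norms, and invoking Lemma~\ref{lemma.Gronwall_ineq} produces the claimed $\mathcal{O}(\lambda_0^{-n/2})$ bound uniformly in $t\in[0,T]$.

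The main obstacle lies in step (i): one has to verify carefully that the absolute value $|\textup{Im}[\zeta]|$ in the probability factor $\sigma_{i_1\cdots i_m}$ for $i_m=3,4$ cancels exactly with $\textup{sgn}(\textup{Im}[\zeta])$ in the weight so as to restore the signed integrand in \eqref{eq.PDO_real_property}, and that the indicators on $r_{i_1\cdots i_m}$ for $i_m=1,2$ versus $i_m=3,4$ align precisely with the low-/high-frequency split of $\Theta_V^{\lambda_0}$. A secondary technical point is propagating the $H^1_\bk$ regularity of $\varphi(t)$ along the backward transport so that Theorem~\ref{thm.stationary_phase_approximation} can be invoked at every $t\in[0,T]$; the commutator $[\nabla_\bk,\tfrac{\hbar\bk}{m}\cdot\nabla_\bx]$ is a lower-order $\nabla_\bx$ term, so a slight strengthening of (\textbf{A3}) to $\varphi\in C([0,T], H^1(\mathbb{R}^{2n}))$ suffices.
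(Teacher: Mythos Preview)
Your proposal is correct and follows essentially the same route as the paper: conditioning on the first life-length, expanding over the five branches to recover the renewal-type equation for the mild solution of \eqref{eq.modified_backward_Wigner} (the paper writes out the branch-$1$ and branch-$3$ computations explicitly, matching your description), and then controlling $\varphi-\varphi_{\lambda_0}$ by the error equation plus Theorem~\ref{thm.stationary_phase_approximation} and Lemma~\ref{lemma.Gronwall_ineq}. Your remarks on mild-solution uniqueness and on propagating $H^1_{\bk}$ regularity are careful additions that the paper leaves implicit, but the core argument is identical.
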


\begin{proof}
Consider the events $\mathrm{E}_t = \{\tau_0: t+\tau_0 \ge T \}$ and $\mathrm{E}_t^c =  \{\tau: t+\tau_0 < T \}$. 
\begin{equation}
\mathsf{\Pi}_Q^\mathrm{w}\left(\mone_{\mathrm{E}_t}(\omega)\right) = \me^{-\gamma_0(T-t)}\varphi_T(\bx(T-t), k),
\end{equation}
Suppose the event $\mathrm{E}_t^c$ occurs, then for the family history $\omega = (Q_0; \omega_1, \cdots, \omega_5)$, it has that
\begin{equation}\label{eq.expansion_Y_t_w}
\mathrm{Y}_t^\mathrm{w}(\omega) = \sum_{i=1}^4 (-1)^{i+1}\zeta_{i} \cdot \mathrm{Y}_t^\mathrm{w}(\omega_i) + \mathrm{Y}_t^\mathrm{w}(\omega_5).
\end{equation}
%\begin{equation}\label{eq.expansion_Y_t_w}
%\begin{split}
%\mathrm{Y}_t^\mathrm{w}(\omega) = &\frac{\xi(x_1, t + \tau)}{\gamma_0} \cdot \mone_{\{|k^s_1| < \lambda_0/|z_1|\} } \mathrm{Y}_t^\mathrm{w}(\omega_1) - \frac{\xi(x_2, t + \tau)}{\gamma_0} \cdot \mone_{\{|k^s_2| < \lambda_0/|z_2|\} } \mathrm{Y}_t^\mathrm{w}(\omega_2)\\
%& + \frac{\eta^+(x_3, t+\tau) \sigma_{3}}{\gamma_0} \cdot \by_t^w(\omega_3) - \frac{\eta^+(x_4, t+\tau) \sigma_{4}}{\gamma_0} \cdot \by_t^w(\omega_4) \\
%& + \frac{\eta^-(x_5, t+\tau) \sigma_{5}}{\gamma_0} \cdot \by_t^w(\omega_5) -  \frac{\eta^-(x_6, t+\tau) \sigma_{6}}{\gamma_0} \cdot \by_t^w(\omega_6) + \mathrm{Y}_t^\mathrm{w}(\omega_7).
%\end{split}
%\end{equation}
%\begin{equation}\label{eq.expansion_Y_t_w}
%\begin{split}
%\mathrm{Y}_t^\mathrm{w}(\omega) = &\frac{\xi(x_1, t + \tau)}{\gamma_0} \cdot \mone_{\{|k^s_1| < \lambda_0/|z_1|\} } \mathrm{Y}_t^\mathrm{w}(\omega_1) - \frac{\xi(x_2, t + \tau)}{\gamma_0} \cdot \mone_{\{|k^s_2| < \lambda_0/|z_2|\} } \mathrm{Y}_t^\mathrm{w}(\omega_2)\\
%& + \frac{\eta^+(x_3, t+\tau) \sigma_{3}}{\gamma_0} \cdot \by_t^w(\omega_3) - \frac{\eta^+(x_4, t+\tau) \sigma_{4}}{\gamma_0} \cdot \by_t^w(\omega_4) \\
%& + \frac{\eta^-(x_5, t+\tau) \sigma_{5}}{\gamma_0} \cdot \by_t^w(\omega_5) -  \frac{\eta^-(x_6, t+\tau) \sigma_{6}}{\gamma_0} \cdot \by_t^w(\omega_6) + \mathrm{Y}_t^\mathrm{w}(\omega_7).
%\end{split}
%\end{equation}
Now we calculate $\mathrm{\Pi}_Q^\mathrm{w} \mathrm{Y}_t^\mathrm{w}$ for the first family $\omega_1$. Owing to the fact that $r_i$ are independent, for the Hahn-Jordan decomposition $\Lambda^{<\lambda_0} = \Lambda_+^{<\lambda_0} - \Lambda_-^{<\lambda_0}$, it has that
\begin{equation}\label{eq.spa_first_stochastic_representation}
\int_0^{T-t}  \frac{\Lambda_-^{<\lambda_0}[\Phi_\mathrm{w}^{(1)}](\bx(\tau), \bk, t+\tau)}{\gamma_0} \D \mathcal{G}(\tau) = \int_{\mathrm{E}_t^c} (-1)^{1+1} \zeta_1 \cdot \mathrm{Y}_t^\mathrm{w}(\omega_1) \mathrm{\Pi}_Q^\mathrm{w}(\D \omega)
\end{equation}
since
\begin{equation*}
\begin{split}
& \int_{\mathrm{E}_t^c}  \frac{\xi(\bx(\tau), t + \tau)}{\gamma_0} \left( \int_{\Omega_1}\mone_{\{r_1 < \lambda_0/|{2\bz_1}|\} } \cdot \mathrm{Y}_t^\mathrm{w}(\omega_1) \mathsf{\Pi}^\mathrm{w}_{Q_1}(\D \omega_1) \right) \mathsf{\Pi}_Q^\mathrm{w}(\D \omega) = \\
&\int_0^{T-t} \D \mathcal{G}(\tau)\int_{B(\frac{\lambda_0}{|{2\bz_1}|})}\D \bk_1 \frac{\xi(\bx(\tau) , t+\tau)}{\gamma_0}  \frac{V^-_W(\bx(\tau), \bk_1, t+\tau)}{\xi(\bx(\tau), t+\tau)} \mathrm{\Phi}_\mathrm{w}^{(1)}(\bx(\tau), \bk-{\bk_1}, t+\tau) .
\end{split}
\end{equation*}
%\[
%\begin{split}
%&\mathsf{\Pi}_Q^\mathrm{w} \left\{\mone_{\mathrm{E}_t^c}(\omega) \cdot \Pi^\mathrm{W}_{Q_1} \left\{\frac{\xi(x(\tau), t + \tau)}{\gamma_0} \cdot \mone_{\{r_1 < \lambda_0/|z_1|\} } \cdot \by_t^w(\omega_1)\right\}\right\} \\
%&= \int_0^{T-t} \D \mathcal{G}(\tau)\int_{B_0(\frac{\lambda_0}{|z(x(\tau)|})} \frac{\xi(x(\tau) , t+\tau)}{\gamma_0} \cdot \frac{V^-_W(x(\tau), k_1, t+\tau)}{\xi(x(\tau), t+\tau)} \Phi_w^{(1)}(x(\tau), k-k_1, t+\tau) \D k_1.
%\end{split}
%\]
where $\bz_i$ is short for $\bz(\bx_i) = \bz(\bx(\tau))$. For the third family $\omega_3$, it yields that
\begin{equation}\label{eq.spa_third_stochastic_representation}
\int_0^{T-t}  \frac{\Lambda_-^{ > \lambda_0}[\Phi_\mathrm{w}^{(1)}](\bx(\tau), \bk, t+\tau)}{\gamma_0} \D \mathcal{G}(\tau) = \int_{\mathrm{E}_t^c} (-1)^{1+3} \zeta_3 \cdot \mathrm{Y}_t^\mathrm{w}(\omega_3) \mathrm{\Pi}_Q^\mathrm{w}(\D \omega)
\end{equation}
since
\begin{equation*}
\begin{split}
&\int_{ \mathrm{E}_t^c} (-1)^{1+3} \zeta_3 \cdot \mathrm{Y}_t^\mathrm{w}(\omega_3) \mathsf{\Pi}_Q^\mathrm{w}(\D \omega) \\
&=\int_{\mathrm{E}_t^c}   \frac{2\breve{\eta} }{\gamma_0}\left(\int_{\Omega_3} \textup{Im}[\sigma_3] \cdot \mone_{\{r_3 > \lambda_0/|\bz_3|\}} \cdot \mathrm{Y}_t^\mathrm{w}(\omega_3)  \mathsf{\Pi}_{Q_3}^\mathrm{w}(\D \omega_3) \right) \mathsf{\Pi}_Q^\mathrm{w}(\D \omega)\\
&=\int_0^{T-t} \D \mathcal{G}(\tau) \int_{{\lambda_0}/{|\bz_3|}}^{+\infty} \D r_3 ~  \frac{2\breve{\eta}}{\gamma_0} \cdot \textup{Im}\left[ \me^{\mi r_{3}|\bz_3|} \left(\frac{2\pi}{\mi r_{3} |\bz_3|}\right)^{\frac{n-1}{2}}\frac{\psi(r_3\sigma_+(\bx(\tau)), t+\tau)}{\Psi(r_3)}\right] \\
& \quad \times \frac{r_3^{n-1}\Psi(r_3)}{ \breve{\eta}}\mathrm{\Phi}_\mathrm{w}^{(1)}(\bx(\tau), \bk - \frac{r_3 \sigma_+(\bx(\tau))}{2}, t+\tau)  .
\end{split}
\end{equation*}
The other terms are tackled in a similar pattern. Summing over five terms recovers $-\Theta_V^{\lambda_0}[\varphi] +\gamma_0 \cdot \varphi$. The proof for the signed-particle model is similar and thus omitted for brevity. 

For the second part, let $\varepsilon(\bx, \bk, t) = \varphi_{\lambda_0}(\bx, \bk, t) - \varphi(\bx, \bk, t)$. According to Eqs.\eqref{eq.backward_Wigner} and \eqref{eq.modified_backward_Wigner}, it is observed that $\varepsilon(\bx, \bk, t)$ satisfies the following equation
\begin{equation}\label{def_error_equation}
\frac{\partial }{\partial t}\varepsilon(\bx, \bk, t) + \frac{\hbar \bk}{m} \cdot \nabla_{\bm{x}}\varepsilon(\bx, \bk, t) = \Theta^{\lambda_0}_V[\varepsilon](\bx, \bk, t) + (\Theta_V - \Theta^{\lambda_0}_V)[\varphi](\bx, \bk, t). 
\end{equation}
with {$\varepsilon(\bx, \bk, T) = 0$}. It's easy to verify by Eq.~\eqref{eq.prior_L2_estimate} that
\begin{equation*}
\begin{split}
\Vert \varepsilon(t) \Vert_2 & \le  \int_t^T \D \mathcal{G}(t^{\prime} -t) \Vert \Theta_V[\varphi](t^{\prime}) - \Theta_V^{\lambda_0}[\varphi](t^{\prime}) \Vert_2 + (1+\frac{K_V}{\gamma_0}) \int_t^T \D \fG(t^{\prime}-t) \Vert \varepsilon(t^{\prime}) \Vert_2 \\
& \le (1 - \me^{-\gamma_0 (T-t)}) \max_{t \in [0, T]} \lambda_0^{-\frac{n}{2}} \Vert \varphi(t) \Vert_{L^2_{\bx} \times H^1_{\bk}} + (1+\frac{K_V}{\gamma_0}) \int_t^T \D \fG(t^{\prime}-t) \Vert \varepsilon(t^{\prime}) \Vert_2 \\
& \le \frac{\lambda_0^{-\frac{n}{2}} (\gamma_0 + K_V)\me^{K_V(T-t)}}{K_V} \max_{t \in [0, T]} \Vert \varphi(t) \Vert_{L^2_{\bx} \times H^1_{\bk}}
\end{split}
\end{equation*}
The second inequality uses the remainder estimate \eqref{eq.error_estimate_stationary_phase_method} in Theorem \ref{thm.stationary_phase_approximation} and the third equality is derived by Lemma \ref{lemma.Gronwall_ineq}.
\end{proof}

The next step is to estimate the upper bounds for the variances, as illustrated by the following theorem. When the assumption (\textbf{A4}) holds, by Young's inequality, the HJD of the operator $\Lambda^{<\lambda_0}$, denoted by $\Lambda^{<\lambda_0}_\pm$, is bounded from $L^p(\mathbb{R}^n) \times L_0^p(\mathbb{R}^n)$ to itself,
\begin{equation}\label{eq.PDO_plus_bound_improve}
\Vert \Lambda^{\lambda_0}_\pm [\varphi](t) \Vert_p \le \alpha_\ast \breve{\xi} \Vert \varphi(t) \Vert_p, \quad p = 1, 2.
\end{equation}

\begin{theorem}[Variances of WBRW-SPA]
Suppose the assumptions {{\bf (A2)}-{\bf (A4)} }are satisfied for a sufficiently large $\lambda_0$ and {$\gamma_2 = \alpha_\ast \breve{\xi}^2$}. Then we have that
\begin{equation} \label{eq.wbrw_spa_wp_variance_estimate_1}
\Vert  \mathsf{\Pi}_{Q}^\mathrm{w} (\mathrm{Y}_t^\mathrm{w} - \varphi_{\lambda_0}(t))^2 \Vert_1 \lesssim (1+\frac{4\gamma_2}{\gamma_0}(T-t)) \me^{2\max(K_V, \frac{\alpha_\ast \breve{\xi}^2}{\gamma_0}) (T-t)} \Vert \varphi_T \Vert_2^2 - \Vert \varphi_{\lambda_0}(t) \Vert_2^2 
\end{equation}
and
\begin{equation} \label{eq.wbrw_spa_sp_variance_estimate_1}
\Vert  \mathsf{\Pi}_{Q}^\mathrm{s} (\mathrm{Y}_t^\mathrm{s} - \varphi_{\lambda_0}(t))^2 \Vert_1 \lesssim (1+2(K_V+ \frac{\gamma_2}{\gamma_0})(T-t))\me^{2\alpha_\ast \breve{\xi}(T-t)} \Vert \varphi_T \Vert_2^2 - \Vert \varphi_{\lambda_0}(t) \Vert_2^2.
\end{equation}
%where $K_V^\ast = \mathcal{O}(\alpha_\ast^{-1})$ and $\alpha_\ast < 1$ depends only on $\mathcal{X}$, $\mathcal{K}$ and $\lambda_0$.
\end{theorem}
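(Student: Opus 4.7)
The plan is to mirror the proof of Theorem \ref{thm.variance_estimate}, adapted to the five-branch structure of WBRW-SPA and to the sharper normalization in {\bf (A4)}. First, conditioning on the ancestor's first branching event $\mathrm{E}_t^c = \{\tau_0: t+\tau_0 < T\} \cap \Omega$ and expanding $(\mathrm{Y}_t^\mathrm{w})^2$ via Eq.~\eqref{eq.expansion_Y_t_w} together with the Markov property of $\mathsf{\Pi}_{Q}^\mathrm{w}$, I would derive a renewal-type equation
\begin{equation*}
\Phi_\mathrm{w}^{(2)}(\bx, \bk, t) = \me^{-\gamma_0(T-t)} \varphi_T^2(\bx(T-t), \bk) + \int_0^{T-t} \D\mathcal{G}(\tau)\, \tilde{B}_\mathrm{w}[\Phi_\mathrm{w}^{(2)}](\bx(\tau), \bk, t+\tau) + \int_0^{T-t} \D\mathcal{G}(\tau)\, \tilde{C}(\bx(\tau), \bk, t+\tau),
\end{equation*}
where the diagonal operator $\tilde{B}_\mathrm{w}$ collects the five $w_i^2$-weighted self-pairings and $\tilde{C}$ collects the off-diagonal pairs between distinct subfamilies, which by Theorem \ref{thm_first_moment_wbrw_spa} are products of $\varphi_{\lambda_0}$ evaluated at the shifted states.

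The critical step is bounding $\Vert \tilde{B}_\mathrm{w}[\phi](t) \Vert_1$ and $\Vert \tilde{C}(t) \Vert_1$. For the HJD branches ($i_m = 1, 2$), the squared weight contributes $(\xi/\gamma_0)^2$ with the transition kernel $V_{W,R}^\pm/\xi$ restricted by $\mone_{\{2|\bk| < \lambda_0/|\bz(\bx)|\}}$, so assumption {\bf (A4)} yields a per-branch $L^1$-contribution $\alpha_\ast \breve{\xi}^2/\gamma_0^2$. For the SPA branches ($i_m = 3, 4$), the squared weight $(2\breve{\eta}/\gamma_0)^2 |\mathrm{Im}[\zeta]|^2$ pairs with the transition density $r^{n-1}\Psi(r)/\breve{\eta}$ supported on $\{r > \lambda_0/|\bz(\bx)|\}$; using $|\zeta(r,\bx,t)| \le (2\pi/(r|\bz(\bx)|))^{(n-1)/2} \le (2\pi/\lambda_0)^{(n-1)/2}$ together with the growth bound in {\bf (A2)}, this contribution is $O(\lambda_0^{-(n-1)/2})$ and is absorbed into the $\lesssim$ notation for large $\lambda_0$. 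Hence $\Vert \tilde{B}_\mathrm{w}[\phi](t) \Vert_1 \le (1 + 2\gamma_2/\gamma_0^2 + o(1)) \Vert \phi(t) \Vert_1$. The correlation term $\tilde{C}$ assembles through $\Theta_V^{\lambda_0}[\varphi_{\lambda_0}] \cdot \varphi_{\lambda_0}$ plus product of branch-operators on $\varphi_{\lambda_0}$; Cauchy--Schwarz together with the $L^2$-boundedness of $\Theta_V^{\lambda_0}$ inherited from Theorem \ref{thm.stationary_phase_approximation} yields $\Vert \tilde{C}(t) \Vert_1 \lesssim (2K_V \gamma_0 + 2\gamma_2)/\gamma_0^2 \, \Vert \varphi_{\lambda_0}(t) \Vert_2^2$.

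Subtracting $(\Phi_\mathrm{w}^{(1)})^2 = \varphi_{\lambda_0}^2$ produces an inequality for $\Delta\Phi_\mathrm{w}^{(2)}$ of the same form as Eq.~\eqref{eq.variation_inequality}, but with $\breve{\xi}^2$ replaced by $\gamma_2 = \alpha_\ast \breve{\xi}^2$ in every coefficient. Applying Lemma \ref{lemma.Gronwall_ineq} together with the prior $L^2$-estimate (Lemma \ref{lemma.L2_estimate}) to $\varphi_{\lambda_0}$---valid since Theorem \ref{thm.stationary_phase_approximation} gives $\Vert \Theta_V^{\lambda_0}[\varphi](t) \Vert_2 \le (K_V + C\lambda_0^{-n/2}) \Vert \varphi(t) \Vert_{L^2_\bx \times H^1_\bk}$---reproduces verbatim the chain of estimates leading to Eq.~\eqref{key_estimate_ut}, delivering Eq.~\eqref{eq.wbrw_spa_wp_variance_estimate_1}. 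The signed-particle estimate \eqref{eq.wbrw_spa_sp_variance_estimate_1} follows by the same scheme: now $\tilde{B}_\mathrm{s}$ has $L^1$-bound $1 + 2\alpha_\ast \breve{\xi}/\gamma_0 + o(1)$ (probabilities rather than squared weights), and the dominant exponent becomes $2\alpha_\ast \breve{\xi}$ instead of $2\breve{\xi}$.

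The main technical obstacle is verifying that the SPA branches' contributions to both $\tilde{B}$ and $\tilde{C}$ are uniformly subdominant, so the leading exponents come entirely from the HJD low-frequency branches rescaled by $\alpha_\ast$. Concretely this reduces to an explicit decay estimate of the form
\begin{equation*}
\int_{r > \lambda_0/|\bz(\bx)|} r^{n-1} \Psi(r) |\mathrm{Im}[\zeta(r, \bx, t)]|^k \D r = O\!\left(\lambda_0^{-k(n-1)/2 + \alpha}\right), \quad k = 1, 2,
\end{equation*}
which follows from the tail bound $\Psi(|\bk|) \le C_{n,\alpha} |\bk|^{-n+\alpha}$ in {\bf (A2)} combined with the oscillatory prefactor of $\zeta$. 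Once this is in hand, the remainder of the proof is a mechanical repetition of the computation following Eq.~\eqref{eq.variation_inequality}, with $\breve{\xi}^2$ and $\breve{\xi}$ replaced systematically by $\alpha_\ast \breve{\xi}^2$ and $\alpha_\ast \breve{\xi}$, respectively.
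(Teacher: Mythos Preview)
Your proposal is correct and follows essentially the same approach as the paper: derive the renewal equation for $\Phi_\mathrm{w}^{(2)}$ by conditioning on the first branching, separate diagonal ($\tilde B_\mathrm{w}$) and off-diagonal ($\tilde C$) contributions, show that the SPA branches are subdominant in $\lambda_0$ so that the $L^1$-bound on $\tilde B_\mathrm{w}$ reduces to $1+2\alpha_\ast\breve\xi^2/\gamma_0^2$, and then rerun the Gr\"onwall argument of Theorem~\ref{thm.variance_estimate} with $\breve\xi$ replaced by $\sqrt{\alpha_\ast}\,\breve\xi$. The only cosmetic differences are that the paper bounds the SPA diagonal term via the cutoff $\chi_{0,R}$ (giving the clean factor $\breve\eta^2(2\pi/\lambda_0)^{n-1}$) rather than via the tail bound of {\bf (A2)}, and your stated decay exponent $O(\lambda_0^{-(n-1)/2})$ for the squared SPA weight should be $O(\lambda_0^{-(n-1)})$; neither affects the argument.
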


\begin{proof}
According to Eqs.~\eqref{def_wbrw_spa_w} and \eqref{eq.expansion_Y_t_w}, the renewal-type equation for the second moment of $\mathrm{Y}_t^\mathrm{w}$ reads that
\begin{equation}
\mathsf{\Pi}^\mathrm{w}_{Q} (\mathrm{Y}_t^\mathrm{w})^2  =  \mathsf{\Pi}^\mathrm{w}_{Q} \left( \mone_{\mathrm{E}_t} (\mathrm{Y}_t ^\mathrm{w})^2 \right) + \mathsf{\Pi}^\mathrm{w}_{Q} \left( \mone_{\mathrm{E}_t^c} (\mathrm{Y}_t ^\mathrm{w})^2  \right),
\end{equation}
where the second term on the right hand side reads
\begin{equation*}
\begin{split}
&\mathsf{\Pi}^\mathrm{w}_{Q} \left( \mone_{\mathrm{E}_t^c} (\mathrm{Y}_t^\mathrm{w})^2  \right) = \sum_{i=1}^5  \int_{\mathrm{E}_t^c} \left( \int_{\Omega_i} w^2_i \cdot (\mathrm{Y}_{t+\tau_0}^\mathrm{w}(\omega_i))^2 \mathsf{\Pi}^\mathrm{w}_{Q_i}(\D \omega_i) \right) \mathsf{\Pi}^\mathrm{w}_{Q}(\D \omega) \\
&+\sum_{i \ne j}  \int_{\mathrm{E}_t^c} \left((-1)^{i+j}  \int_{\Omega_i} w_i   \mathrm{Y}^\mathrm{w}_{t+\tau_0}(\omega_i) \mathsf{\Pi}^\mathrm{w}_{Q_i}(\D \omega_i) \int_{\Omega_j} w_j \mathrm{Y}^\mathrm{w}_{t+\tau_0} (\omega_j) \mathsf{\Pi}^\mathrm{w}_{Q_j}(\D \omega_j)\right) \mathsf{\Pi}^\mathrm{w}_{Q}(\D \omega).
\end{split}
\end{equation*}
%\[
%\begin{split}
%\mathsf{\Pi}^\mathrm{w}_{Q} (\mathrm{Y}_t^\mathrm{w})^2  = &\Pi_Q^\mathrm{w} \left( \mone_{E_t} (\mathrm{Y}_t^\mathrm{w})^2 \right) +\Pi_Q^\mathrm{w} \left( \mone_{\mathrm{E}_t^c} (\mathrm{Y}_t^\mathrm{w})^2  \right) \\
% =& \Pi_Q^\mathrm{w} \left(  \mone_{E_t} (\mathrm{Y}_t^\mathrm{w})^2 \right) +\sum_{i=1}^7  \int_{\Omega \cap \mathrm{E}_t^c} \left\{  w^2_i\int_{\Omega_i}  (Y_{t+\tau_0}^w)^2(\omega_i) \Pi^\mathrm{w}_{Q_i}(\D \omega_i) \right\}\Pi_Q^\mathrm{w}(\D \omega) + \\
%& \sum_{i \ne j}  \int_{\Omega \cap \mathrm{E}_t^c} \left\{(-1)^{i+j}w_i w_j  \int_{\Omega_i} Y^w_{t+\tau_0}(\omega_i) \Pi^\mathrm{w}_{Q_i}(\D \omega_i) \int_{\Omega_j} Y^w_{t+\tau_0} (\omega_j) \Pi^\mathrm{w}_{Q_j}(\D \omega_j)\right\}\mathsf{\Pi}^\mathrm{w}_{Q}(\D \omega).
%%= & \me^{- \gamma_0(T-t)} \varphi^2_T(\bx(T-t), k) + \int_{0}^{T-t}  \D \fG(\tau_0) B^{\lambda_0}_w[\phi_w^{{2}}](x(\tau_0), k, t+\tau_0)\\
%%&+ \int_{0}^{T-t}  \D \fG(\tau_0)  C^{\lambda_0}(x(\tau_0), k, t+\tau_0), 
%\end{split}
%\]

Now we define an operator $B_\mathrm{w}^{\lambda_0}$ and verify its $L^1$-boundedness,
\begin{equation}\label{def.B_w_lambda_0}
B_\mathrm{w}^{\lambda_0}[\Phi_\mathrm{w}^{(2)}](\bx(\tau_0), \bk, t+\tau_0) = \sum_{i=1}^5  \int_{\Omega_i}  w^2_i \cdot (\mathrm{Y}_{t+\tau_0}^\mathrm{w})^2(\omega_i) \mathsf{\Pi}^\mathrm{w}_{Q_i}(\D \omega_i) . 
\end{equation}
%Owing to Eq.~\eqref{distance_condition} and Young's inequality,  there exists an $\alpha_\ast<1$ such that
%\begin{equation}\label{eq.PDO_plus_bound_improve}
%\big \Vert \Theta_V^{\pm}\left[\varphi(t) \cdot \mone_{\{ |k| < \lambda_0/|\bz(\bx)| \} }\right] \big \Vert_p \le \alpha_\ast \breve{\xi} \Vert \varphi(t) \Vert_p, ~~ \breve{\xi} \le \gamma_0  < \infty, ~~ p=1, 2.
%\end{equation}
Since $|\textup{Im}[\zeta(r_3, \bx(\tau), t+\tau)] |^2 \le \left({2\pi}/{\lambda_0}\right)^{n-1}$, it has that
\begin{equation}\label{ineq.asymp_term}
\begin{split}
& \Vert \int_{ \mathrm{E}_t^c}  \left( \int_{\Omega_3} w^2_3 \cdot (\mathrm{Y}_{t+\tau_0}^\mathrm{w}(\omega_3) )^2\mathsf{\Pi}^\mathrm{w}_{Q_3}(\D \omega_3) \right) \mathsf{\Pi}^\mathrm{w}_{Q}(\D \omega) \Vert_{L^1_{\bk}} \\
& \le \frac{4\breve{\eta}}{\gamma_0^2}  (\frac{2\pi}{\lambda_0})^{n-1} \Vert \int_{\frac{\lambda_0}{|\bz_3|}}^{+\infty} r_3^{n-1} \Psi(r_3) \chi_{0, R}(r_3) \cdot \Phi_\mathrm{w}^{(2)}(\bx(\tau), \bk - \frac{r_3\sigma_+(\bx(\tau))}{2}, t+\tau) \D r_3  \Vert_{L^1_{\bk}} \\
& \lesssim \frac{4\breve{\eta}^2}{\gamma_0^2}  (\frac{2\pi}{\lambda_0})^{n-1}  \Vert \Phi_\mathrm{w}^{(2)}(\bx(\tau), \bk, t +\tau) \Vert_{L^1_{\bk}}.
\end{split}
\end{equation}

%\begin{equation}
%\int_{\lambda_0/|\bz(\bx)|}^{+\infty} | \textup{Im}[\me^{\mi r |\bz(\bx)|} \left(\frac{2\pi}{\mi r |\bz(\bx)|}\right)^{\frac{n-1}{2}}]  \Psi(r, \sigma_+(x), t)| \D r  \le (\frac{2\pi}{\lambda_0})^{n-1} \left|\eta(x, t) \right|,
%\end{equation}
%and due to the Young's inequality for the integral operator, it yields that 
%\begin{equation}\label{ineq.asymp_term}
%\Vert \int_{\lambda_0/|z|}^{+\infty} \me^{2\mi r |z|} \left(\frac{2\pi}{\mi r |z|}\right)^{n-1}\Psi(r, \sigma_+, t) \phi_\mathrm{w}^{(2)}(\bx, \bk \pm \frac{r\sigma_+}{2}, t) \D r  \Vert_1 \le \breve{\eta}  (\frac{2\pi}{\lambda_0})^{n-1} \Vert \phi_\mathrm{w}^{(2)}(t) \Vert_1.
%\end{equation}

Combining Eqs.~\eqref{eq.PDO_plus_bound_improve} and \eqref{ineq.asymp_term}, we obtain that
\begin{equation}
\Vert B_\mathrm{w}^{\lambda_0}[\Phi_\mathrm{w}^{(2)}](t) \Vert_1 \le (1+ \frac{2 \alpha_\ast\breve{\xi}^2}{\gamma_0^2}   + \frac{8\breve{\eta}^2}{\gamma_0^2}  (\frac{2\pi}{\lambda_0})^{n-1}) \Vert \Phi_\mathrm{w}^{(2)}(t) \Vert_1.
\end{equation}
Thus for a sufficiently large $\lambda_0$ (such as $\breve{\eta}^2 (\frac{2\pi}{\lambda_0})^{n-1} \ll \breve{\xi}^2)$, it further yields that
\begin{equation}\label{eq_Bw_estimate_asymptotic}
\Vert B_\mathrm{w}^{\lambda_0}[\Phi_\mathrm{w}^{(2)}](t) \Vert_1  \lesssim (1 + \frac{2\alpha_\ast \breve{\xi}^2}{\gamma_0^2}) \Vert \Phi_\mathrm{w}^{(2)}(t) \Vert_1 + \mathcal{O}(\lambda_0^{-(n-1)}),
\end{equation}
%\begin{equation}
%\begin{split}
%\Vert B_\mathrm{w}^{\lambda_0}[\phi_\mathrm{w}^{(2)}](t) \Vert_1 & \le \left[1 + \frac{\breve{\xi}}{\gamma_0^2}\Vert V^+_W \cdot \mone_{\{ |k| < \frac{\lambda_0}{|z|} \} } \Vert_1 +  \frac{\breve{\xi}}{\gamma_0^2} \Vert V^-_W \cdot \mone_{\{ |k| < \frac{\lambda_0}{|z|} \} } \Vert_1 + \frac{4\breve{\eta}^2}{\gamma_0^2}  (\frac{2\pi}{\lambda_0})^{n-1}\right] \Vert \phi_\mathrm{w}^{(2)}(t) \Vert_1\\
%&\le \left[1+ \frac{2\breve{\xi}^2}{\gamma_0^2} \frac{\mu(B_0( \frac{\lambda_0}{|z|} ))}{\mu(\mathcal{K}_0)}  + \frac{4\breve{\eta}^2}{\gamma_0^2}  (\frac{2\pi}{\lambda_0})^{n-1}\right] \Vert \phi_\mathrm{w}^{(2)}(t) \Vert_1  \le \left(1 + \frac{2\alpha_\ast \breve{\xi}^2}{\gamma_0^2}\right) \Vert \phi_\mathrm{w}^{(2)}(t) \Vert_1,
%\end{split}
%\end{equation}

For the non-diagonal terms, we define correlated terms $C_{ij}^{\lambda_0}(\bx, \bk, t)$ as
\begin{equation}
C_{ij}^{\lambda_0}(\bx(\tau_0), \bk, t+\tau_0) =  (-1)^{i+j}  \int_{\Omega_i} w_i \mathrm{Y}^\mathrm{w}_{t+\tau_0}(\omega_i) \mathsf{\Pi}^\mathrm{w}_{Q_i}(\D \omega_i) \int_{\Omega_j} w_j   \mathrm{Y}^\mathrm{w}_{t+\tau_0} (\omega_j) \mathsf{\Pi}^\mathrm{w}_{Q_j}(\D \omega_j),
\end{equation}
in which each term can be calculated in the similar way as in Eqs.~\eqref{eq.spa_first_stochastic_representation} and \eqref{eq.spa_third_stochastic_representation}. In fact, by using Young's inequality and Cauchy-Schwarz inequality, $C^{\lambda_0}(\bx, \bk, t) = \sum_{i < j}C_{ij}^{\lambda_0}(\bx, \bk, t)$ can be estimated by
\begin{equation}\label{eq_C_estimate_asymptotic}
%\hspace*{-0.3cm} 
\begin{split}
\Vert C^{\lambda_0}(t) \Vert_1 \le &  \frac{\Vert \Lambda_+^{< \lambda_0}[\varphi_{\lambda_0}](t)\Vert_2  \cdot \Vert \Lambda_-^{<\lambda_0}[\varphi_{\lambda_0}](t)\Vert_2 }{\gamma_0^2} + \frac{\Vert \Lambda_+^{>\lambda_0}[\varphi_{\lambda_0}](t)\Vert_2  \cdot \Vert \Lambda_-^{>\lambda_0}[\varphi_{\lambda_0}](t)\Vert_2 }{\gamma_0^2} \\
&+ \frac{\Vert \Lambda_+^{< \lambda_0}[\varphi_{\lambda_0}](t) - \Lambda_-^{< \lambda_0}[\varphi_{\lambda_0}](t) \Vert_2 \cdot \Vert \Lambda_-^{>\lambda_0}[\varphi_{\lambda_0}](t)\Vert_2 }{\gamma_0^2} \\
&+ \frac{\Vert \Lambda_+^{< \lambda_0}[\varphi_{\lambda_0}](t) - \Lambda_-^{< \lambda_0}[\varphi_{\lambda_0}](t) \Vert_2  \cdot\Vert \Lambda_+^{>\lambda_0}[\varphi_{\lambda_0}](t)\Vert_2 }{\gamma_0^2} \\
& + \frac{\Vert \Lambda_+^{>\lambda_0}[\varphi_{\lambda_0}](t) - \Lambda_-^{>\lambda_0}[\varphi_{\lambda_0}](t)\Vert_2  \cdot\Vert \varphi_{\lambda_0}(t) \Vert_2^2 }{\gamma_0}  \\
& + \frac{\Vert \Lambda_+^{<\lambda_0}[\varphi_{\lambda_0}](t) - \Lambda_-^{<\lambda_0}[\varphi_{\lambda_0}](t)\Vert_2  \cdot\Vert \varphi_{\lambda_0}(t) \Vert_2 }{\gamma_0}  \\
\le & \left(\frac{\alpha_\ast^2 \breve{\xi}^2}{\gamma_0^2} + \left(\frac{4\breve{\eta}^2}{\gamma_0^2}(\frac{2\pi}{\lambda_0})^{\frac{n-1}{2}} + \frac{4K_V \breve{\eta}}{\gamma_0^2} + \frac{4\breve{\eta}}{\gamma_0}\right) (\frac{2\pi}{\lambda_0})^{\frac{n-1}{2}} + \frac{K_V}{\gamma_0}\right) \Vert \varphi_{\lambda_0}(t) \Vert_2^2 \\
\lesssim & (\frac{ \alpha_{\ast}\breve{\xi}^2}{\gamma_0^2} + \frac{ K_V}{\gamma_0}) \Vert \varphi_{\lambda_0}(t) \Vert_2^2 + \mathcal{O}(\lambda_0^{-\frac{n-1}{2}}).
\end{split}
\end{equation}

%\begin{equation}
%\begin{split}
%\Vert \sum_{i \ne j} C_{ij}^{\lambda_0}(t) \Vert_1 &\le (\frac{2 \alpha_\ast \breve{\xi}^2}{\gamma_0^2}  + \frac{2K_V}{\gamma_0} + \left(\frac{4K_V\breve{\eta}}{\gamma_0^2} + \frac{2\breve{\eta}^2}{\gamma^2_0}(\frac{2\pi}{\lambda_0})^{\frac{n-1}{2}}  + \frac{4\breve{\eta}}{\gamma_0}\right) (\frac{2\pi}{\lambda_0})^{\frac{n-1}{2}}) \Vert \varphi_{\lambda_0}(t) \Vert_2^2 \\
%& \lesssim  (\frac{ 2 \alpha_{\ast}\breve{\xi}^2}{\gamma_0^2} + \frac{ 2 K_V}{\gamma_0}) \Vert \varphi_{\lambda_0}(t) \Vert_2^2 + \mathcal{O}(\lambda_0^{-\frac{n-1}{2}}).
%\end{split}
%\end{equation}

In addition, since 
\begin{equation}
\Vert \Theta_V^{\lambda_0}[\varphi_{\lambda_0}](t) \Vert_2 \le K_V \Vert \varphi_{\lambda_0}(t) \Vert_2 + C\lambda_0^{-\frac{n-1}{2}} \Vert \varphi_{\lambda_0}(t) \Vert_2,
\end{equation}
according to Lemma \ref{lemma.L2_estimate}, it yields that
\begin{equation}\label{eq_L2_estimate_asymptotic}
\Vert \varphi_{\lambda_0}(t) \Vert_2  \lesssim \me^{K_V(T-t)} \Vert \varphi_T \Vert_2 + \mathcal{O}(\lambda_0^{-\frac{n-1}{2}}).
\end{equation}

Combining Eqs.~\eqref{eq_Bw_estimate_asymptotic}, \eqref{eq_C_estimate_asymptotic} and \eqref{eq_L2_estimate_asymptotic}, we have the following result
%\begin{equation}\label{eq.wpBRW_SPA_variation} 
%\begin{split}
%\Delta \phi^{(2)}_\mathrm{w}(\bx, \bk, t)  =& \me^{-\gamma_0(T-t)} \varphi^2_T(\bx(T-t), k)  -\varphi_{\lambda_0}^2(\bx, \bk, t) \\
%&+ \int_{0}^{T-t} \D \mathcal{G}(\tau) \left\{ B^{\lambda_0}_w[\varphi_{\lambda_0}^2](x(\tau), k, t+\tau) + C^{\lambda_0}(x(\tau), k, t+\tau)\right\} \\
%&+ \int_{0}^{T-t} \D \mathcal{G}(\tau) B^{\lambda_0}_w[\Delta \phi_\mathrm{w}^{(2)}](x(\tau), k, t+\tau) + \mathcal{O}(\lambda_0^{-\frac{n-1}{2}}).
%\end{split}
%\end{equation}
\begin{equation}\label{eq.SPA_variation_inequality}
\begin{split}
\Vert \Delta \Phi_\mathrm{w}^{(2)}(t) \Vert_1 
 &\lesssim \me^{-\gamma_0(T-t)}  \Vert \varphi_T \Vert^2_2 - \Vert \varphi_{\lambda_0}(t) \Vert_2^2 +2 \int_t^T \D \mathcal{G}(t^{\prime}-t) \Vert C^{\lambda_0}(t^{\prime}) \Vert_1\\
 &+ (1+\frac{2 \alpha_\ast \breve{\xi}^2}{\gamma_0^2}) \int_t^T \D \mathcal{G}(t^{\prime}-t) \left\{ \Vert \Delta \Phi_\mathrm{w}^{(2)}(t^{\prime}) \Vert_1 + \Vert \varphi_{\lambda_0}(t^{\prime}) \Vert_2 \right\} + \mathcal{O}(\lambda_0^{-\frac{n-1}{2}}).
\end{split} 
\end{equation}

Notice that by replacing $\breve{\xi}$ in Eqs.~\eqref{eq.variation_inequality} and \eqref{key_estimate_ut} by $\sqrt{\alpha_\ast} \breve{\xi}$, we can further obtain the following estimate for Eq.~\eqref{eq.SPA_variation_inequality}, 
\begin{equation}\label{eq.wp_spa_variation_L1_bound}
\begin{split}
\Vert \Delta \Phi_\mathrm{w}^{(2)}(t) \Vert_1 \lesssim &\left(\frac{K_V \gamma_0 + \alpha_\ast \breve{\xi}^2}{K_V \gamma_0 - \alpha_\ast \breve{\xi}^2} \me^{2K_V(T-t)} - \frac{2\alpha_\ast \breve{\xi}^2}{K_V\gamma_0 -\alpha_\ast \breve{\xi}^2} \me^{\frac{2\alpha_\ast \breve{\xi}^2}{\gamma_0}(T-t)}\right) \Vert \varphi_T\Vert_2^2 \\
&- \Vert \varphi_{\lambda_0}(t) \Vert_2^2 + \mathcal{O}(\lambda_0^{-\frac{n-1}{2}}).
\end{split}
\end{equation}
Therefore, for the weighted particle model,  it has that
\begin{equation}\label{eq.bound_wp_spa_variation}
\Vert \Delta \Phi_\mathrm{w}^{(2)}(t) \Vert_1 \lesssim (1+\frac{4\alpha_\ast \breve{\xi}^2}{\gamma_0}(T-t))\me^{2\max(K_V, \frac{\alpha_ \ast \breve{\xi}^2}{\gamma_0}) (T-t)} \Vert \varphi_T \Vert_2^2 - \Vert \varphi_{\lambda_0}(t) \Vert_2^2 + \mathcal{O}(\lambda_0^{-\frac{n-1}{2}}).
\end{equation}

Similarly, we can obtain the estimate of the variance for the signed-particle model, 
\begin{equation}
\Vert \Delta \Phi_\mathrm{s}^{(2)}(t) \Vert_1 \lesssim (1+(2K_V+ \frac{2 \alpha_\ast \breve{\xi}^2}{\gamma_0})(T-t))\me^{2\alpha_\ast\breve{\xi}(T-t)} \Vert \varphi_T \Vert_2^2 - \Vert \varphi_{\lambda_0}(t) \Vert_2^2 + \mathcal{O}(\lambda_0^{-\frac{n-1}{2}}).
\end{equation}
When $\lambda_0$ is sufficiently large, we can throw way the remainder term $\mathcal{O}(\lambda_0^{-\frac{n-1}{2}})$ and arrive at Eqs.~\eqref{eq.wbrw_spa_wp_variance_estimate_1} and \eqref{eq.wbrw_spa_sp_variance_estimate_1}.
\end{proof}

As the last step, we complete the proof of Theorem \ref{SPA_thm.variance_estimate}.

\begin{proof}[Proof of Theorem \ref{SPA_thm.variance_estimate}]
Since
\begin{equation*} 
\begin{split}
\mathsf{\Pi}_Q^\mathrm{w} (\mathrm{Y}_t^\mathrm{w} - \varphi(\bx, \bk, t))^2 =& \mathsf{\Pi}_Q^\mathrm{w} (\mathrm{Y}_t^\mathrm{w} - \varphi_{\lambda_0}(\bx, \bk, t) + \varphi_{\lambda_0}(\bx, \bk, t) - \varphi(\bx, \bk, t))^2 \\
 =& \mathsf{\Pi}_Q^\mathrm{w} (\mathrm{Y}_t^\mathrm{w} - \varphi_{\lambda_0}(\bx, \bk, t))^2 + (\varphi_{\lambda_0}(\bx, \bk, t) - \varphi(\bx, \bk, t))^2\\
&+ 2 \mathsf{\Pi}_Q^\mathrm{w}(\mathrm{Y}_t^\mathrm{w} - \varphi_{\lambda_0}(\bx, \bk, t)) \cdot (\varphi_{\lambda_0}(\bx, \bk, t) - \varphi(\bx, \bk, t)).
\end{split} 
\end{equation*}
By the {extended Minkowski's inequality and the Cauchy-Schwarz inequality}, it has that 
\begin{equation*} \label{eq.triangular_inequality}
\begin{split}
\Vert \mathsf{\Pi}_Q^\mathrm{w} (\mathrm{Y}_t^\mathrm{w} - \varphi(\bx, \bk, t))^2 \Vert_1 \le & \Vert  \mathsf{\Pi}_Q^\mathrm{w} (\mathrm{Y}_t^\mathrm{w} - \varphi_{\lambda_0}(\bx, \bk, t))^2 \Vert_1 + \Vert \varphi_{\lambda_0}(t) - \varphi(t) \Vert_2^2\\
&+ 2 \Vert \mathsf{\Pi}_Q^\mathrm{w}(\mathrm{Y}_t^\mathrm{w} - \varphi_{\lambda_0}(t))\Vert_2 \cdot \Vert \varphi_{\lambda_0}(t) - \varphi(t))\Vert_2.
\end{split} 
\end{equation*}

The first term is bounded by Eq.~\eqref{eq.bound_wp_spa_variation}. The estimate $\Vert \varphi_{\lambda_0}(t) - \varphi(t) \Vert_2^2$ in the second term is given by Eq.~\eqref{spa_bias_estimate}. And the third term is zero due to Theorem \ref{thm_first_moment_wbrw_spa}. Thus it finalizes the proof of Eq.~\eqref{eq.wbrw_spa_wp_variance_estimate}. The proof of Eq.~\eqref{eq.wbrw_spa_sp_variance_estimate} is similar and omitted here for brevity.
\end{proof}

%Now we need to estimate $\Vert \varphi_{\lambda}(t) - \varphi(t) \Vert_2^2$. Let $\varepsilon(\bx, \bk, t) = \varphi_{\lambda_0}(\bx, \bk, t) - \varphi(\bx, \bk, t) $. According to Eqs.\eqref{eq.backward_Wigner} and \eqref{eq.modified_backward_Wigner} and the variation-of-constant formula, we have that
%\begin{equation}\label{eq.error_Wigner}
%\begin{split}
%\Vert \varepsilon(t) \Vert_2 & \lesssim  \int_t^T \Vert \Theta_V[ \varepsilon](s) \Vert_2 \D s + C\lambda_0^{-\frac{n+1}{2}} \Vert \varphi(t) \Vert_2 \le K_V \int_t^T \Vert \varepsilon(s) \Vert_2 \D s + C^{\prime}\lambda_0^{-\frac{n+1}{2}} \Vert \varphi_T \Vert_2.
%\end{split}
%\end{equation}
%Thus by Lemmas \ref{lemma.Gronwall_ineq} and \ref{lemma.L2_estimate}, we can conclude that for sufficiently large $\lambda_0$, there exist a constant $C^{\prime \prime}$ such that
%\begin{equation}\label{eq_compare_solution}
%\Vert \varepsilon(t) \Vert_2 \le C^{\prime \prime} \lambda_0^{-\frac{n+1}{2}} \Vert \varphi_T \Vert_2.
%\end{equation}
%Substituting Eq.~\eqref{eq_compare_solution} into Eq.~\eqref{eq.triangular_inequality} yields Eq.~\eqref{eq.wbrw_spa_wp_variance_estimate}.  Similarly, we can prove Eq.~\eqref{eq.wbrw_spa_sp_variance_estimate}.

Theorem \ref{SPA_thm.variance_estimate} implies that the variance of the inner product problem can be reduced by chopping the $\bx$-support and adopting the WBRW-SPA for the region in which the distance $|\bz(\bx)|$ between two interacting bodies is sufficiently large (as stated in the assumption {\bf (A4)}). The price to pay is to introduce some asymptotic biases, which may be negligible when $\alpha_\ast$ is sufficiently small. In Section \ref{sec:num}, we will show that all the results in our theoretical analysis can be {verified} in numerical experiments.

\section{Numerical validation}
\label{sec:num}

This section is devoted to the numerical validation of our theoretical results. The prototype model is the quantum system under the two-body interaction. Here we take the two-dimensional Morse potential as an example.
\begin{equation}\label{morse}
V(\bx) = - 2\me^{-\kappa(|\bx-\bx_A|- r_0)} + \me^{-2\kappa(|\bx-\bx_A|- r_0)}.
\end{equation}
In this case, $\bz(\bx) = (z_1, z_2) = \bx - \bx_A$ and $\psi(\bk)$ reads that
\begin{equation}
\psi(\bk) = \frac{1}{\mi \hbar} \left[  - \frac{2\kappa \me^{\kappa r_0}c_2 }{(|\bk|^2 + \kappa^2)^{3/2}} + \frac{2\kappa \me^{2\kappa r_0}c_2}{(|\bk|^2 + 4\kappa^2)^{3/2}} \right].
\end{equation}
Thus the pseudo-differential operator is given by that
\begin{equation}\label{pdo_2d_morse}
\begin{split}
\Theta_V[\varphi](\bx, \bk, t) =  &  - \frac{\kappa \me^{\kappa r_0} c_2}{\hbar} \int_{0}^{2\pi} \D \vartheta \int_{0}^{+\infty} \D r~ \frac{r \sin(2\bz(\bx) \cdot \bk^{\prime})}{\sqrt{r^2+(\kappa/2)^2}}  \frac{\Delta_{r\sigma}[\varphi](\bx, \bk, t)}{r^2 + (\kappa/2)^2}  \\
&+ \frac{\kappa \me^{2\kappa r_0} c_2}{\hbar} \int_{0}^{2\pi} \D \vartheta \int_{0}^{+\infty} \D r~ \frac{r \sin(2\bz(\bx) \cdot \bk^{\prime})}{\sqrt{r^2+\kappa^2}}  \frac{\Delta_{r\sigma}[\varphi](\bx, \bk, t)}{r^2 + \kappa^2} ,
\end{split}
\end{equation}
where $\bk^{\prime} = r\sigma = (r \cos \vartheta, r\sin \vartheta)$ and $c_2 = \Gamma(3/2)\pi^{-3/2} \approx 0.1592$. The stationary phase approximation to PDO reads
\begin{equation}
\begin{split}
\Theta^{\lambda_0}_V[\varphi](\bx, \bk, t)  = & \int_{B(\frac{\lambda_0}{|\bz(\bx)|})} \me^{\mi \bz(\bx) \cdot \bk^{\prime}} \psi(\bk^{\prime}) ( \varphi(\bx, \bk - \frac{\bk^{\prime}}{2}, t) - \varphi(\bx, \bk+\frac{\bk^{\prime}}{2}, t) ) \D \bk^{\prime} \\
&+2\int_{\frac{\lambda_0}{|\bz(\bx)|}}^{+\infty} \textup{Im}\left[\me^{\mi r |\bz(\bx)|} \left(\frac{2\pi}{\mi r |\bz(\bx)|}\right)^{\frac{1}{2}}\right] r{\psi}(r\sigma_+) \Delta_{r\sigma_+} [\varphi](\bx, \bk, t) \D r,
\end{split}
\end{equation}
where $\vartheta^+ =\textup{atan2}(z_2/z_1)$.
%with $\textup{atan2}$ the 2-argument arctangent.}

The variances can be monitored within the implementation of the Monte Carlo algorithm in \cite{ShaoXiong2019}. Here we adopt the Gaussian wavepacket of the form \eqref{def.Gaussian_wave_packet} as the initial {condition:}
\begin{equation}\label{def.Gaussian_wave_packet}
f_0(x_1, x_2, k_1, k_2) = \frac{1}{\pi^2}\me^{-0.5(x_1-8)^2 - 0.5(x_2-12)^2 - 2(k_1 - 0.5)^2 - 2(k_2 + 0.5)^2}.
\end{equation}
Other parameters are set as: $\bx_A = (10, 10), r_0 = 0.5, \kappa = 0.5$, $\hbar = m = 1$, 
$T=2$. We simulate the $10^5$ independent family trees and measure the $L^2$-error \cite{XiongShao2019}, which is proportional to the variance. The numerical solutions obtained by the highly accurate deterministic scheme are adopted as the reference \cite{XiongChenShao2016}. 
\begin{figure}[!h]
\centering
  \subfigure[Variance of WBRW-HJD.]{\includegraphics[width=0.64\textwidth,height=0.40\textwidth]{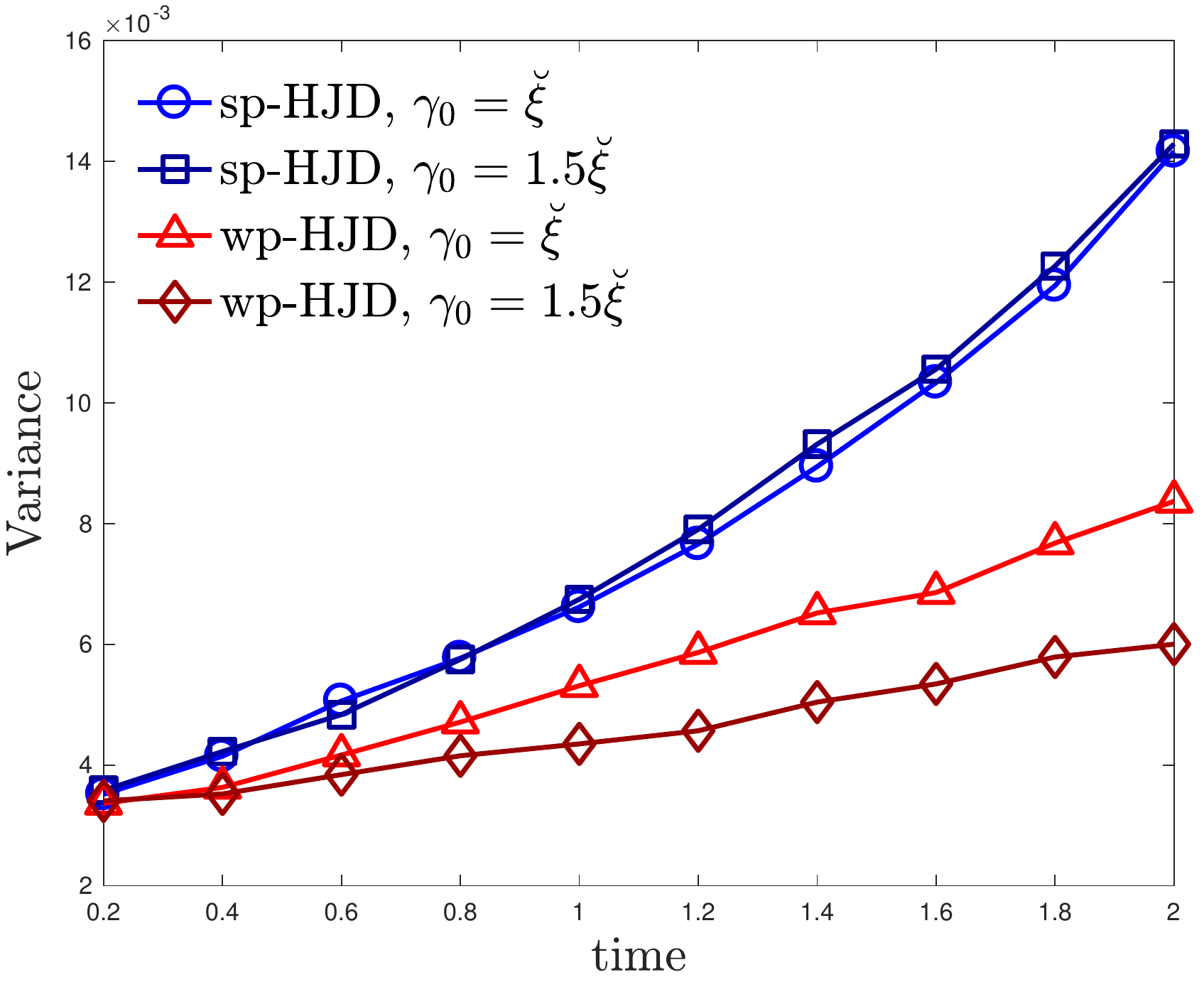} \label{fig_a}}\\
  \centering
  \subfigure[Variance of wp-SPA and sp-SPA.]{\includegraphics[width=0.64\textwidth,height=0.40\textwidth]{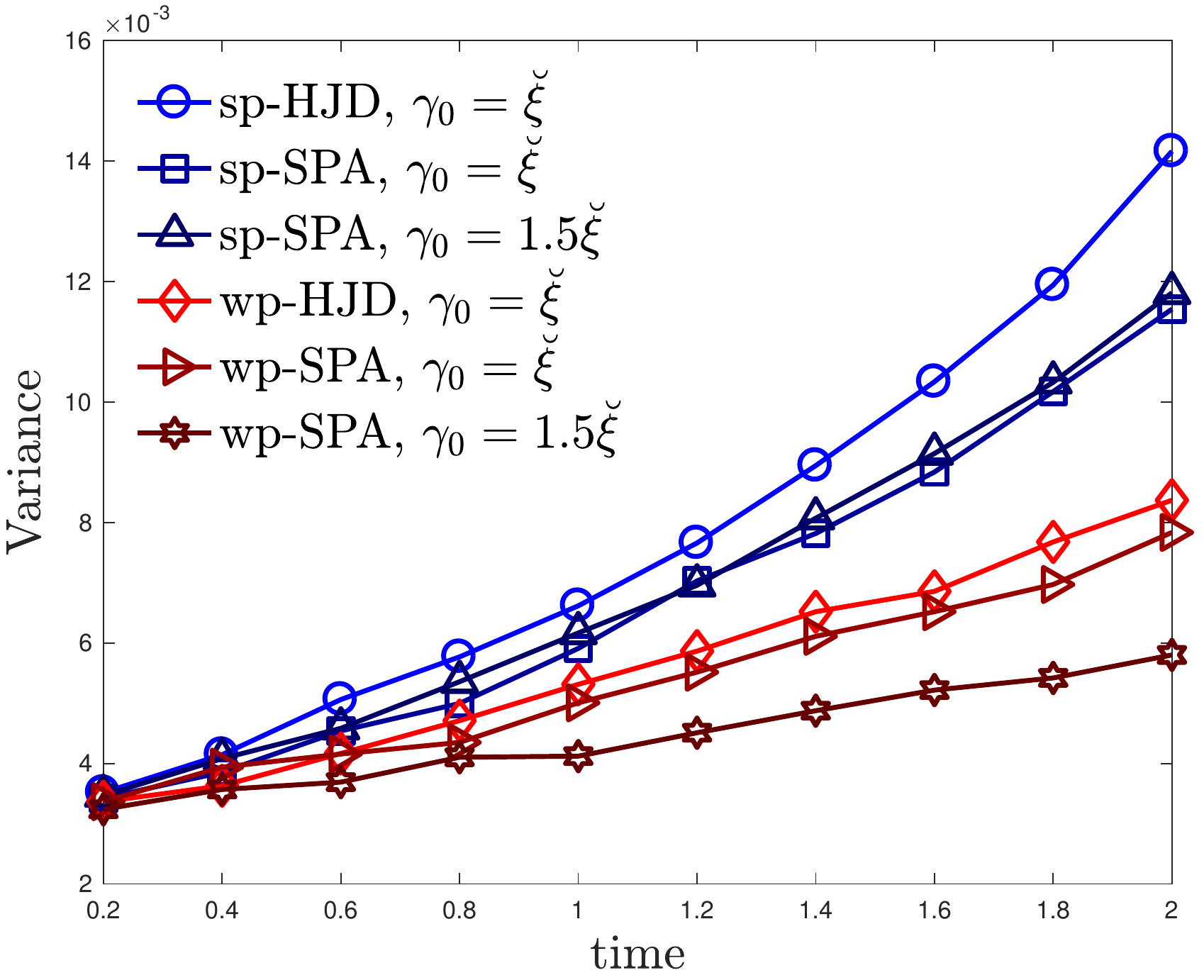} \label{fig_b}} \\
  \centering
  \subfigure[Variance of sp-SPA under different $\lambda_0$.]{\includegraphics[width=0.64\textwidth,height=0.40\textwidth]{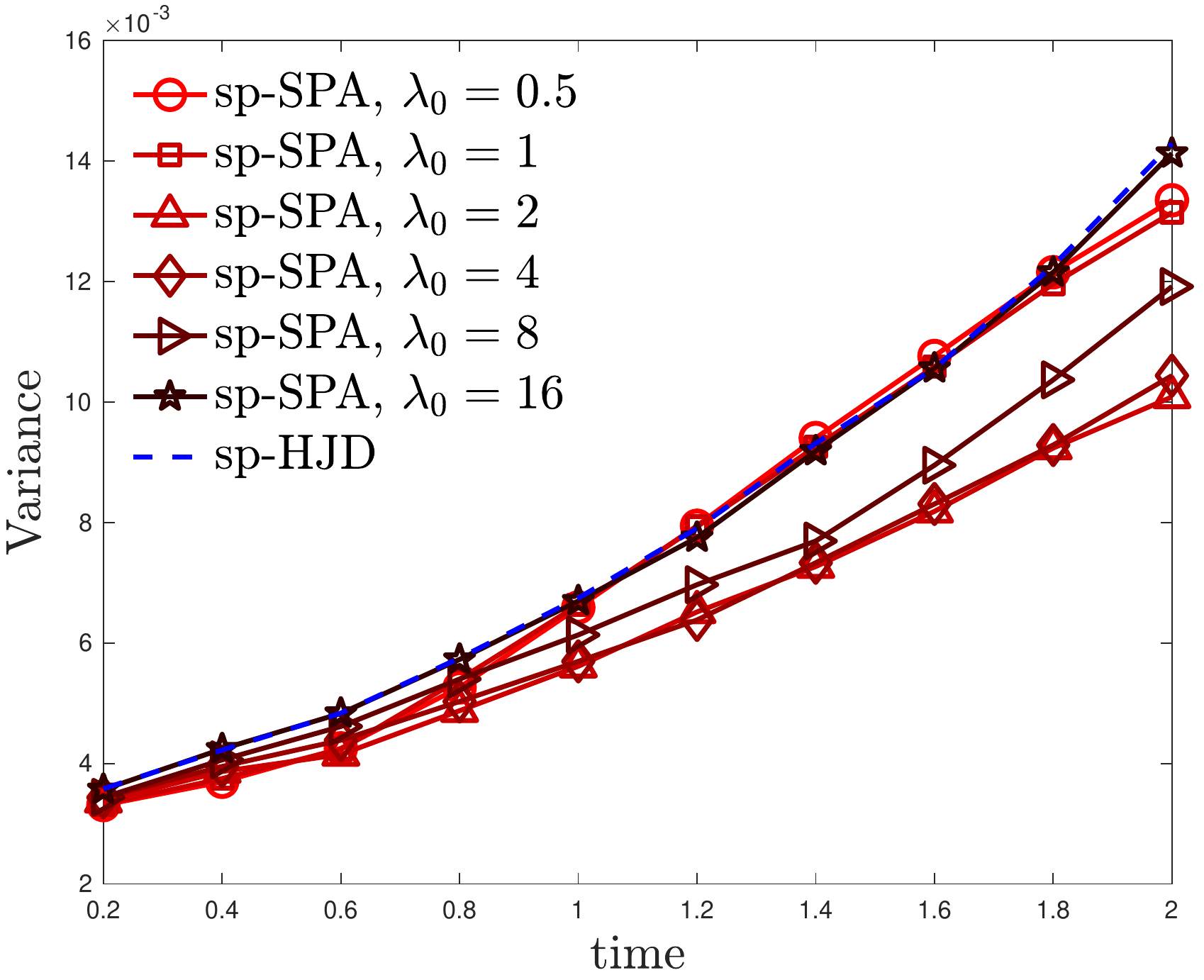} \label{fig_c}} 
  \caption{\small  The variances are monitored in the Monte Carlo simulations. The exponential growth of variances is observed. The choice of $\gamma_0$ has a great influence on the variance of both the weighted-particle WBRW-HJD (wp-HJD) and WBRW-SPA (wp-SPA), while it has little influence on that of the signed-particle counterparts (sp-HJD, sp-SPA). The variances are clearly reduced when the stationary phase approximation is adopted, but the results are not very satisfactory when $\lambda_0$ is either too large or too small.}
  \label{fig:variance}
\end{figure}

Fig.~\ref{fig_a} makes a comparison between the weighted-particle WBRW-HJD (wp-HJD) and the signed-particle one (sp-HJD). 
The variance of the weighted-particle model can be reduced by choosing a larger $\gamma_0$, while this does not hold for the signed-particle counterpart. Fig.~\ref{fig_b} compares the WBRW-HJD and WBRW-SPA when $\lambda_0 = 8$. It's readily seen that the variances of both weighted-particle WBRW-SPA (wp-SPA) and the signed-particle counterpart (sp-SPA)  are diminished when the stationary phase method is adopted,  while the variance of the signed-particle model is still independent of the choice of $\gamma_0$.  Finally, Fig.~\ref{fig_c} compares sp-SPA under different settings of $\lambda_0$. It is found that the reduction of variance might not be significant for too large $\lambda_0$ as $\alpha_\ast$ may be very close to 1, while  the errors in the asymptotic expansion become dominated for too small $\lambda_0$. A reasonable choice of $\lambda_0$ (such as $\lambda_0 = 2$) can strike a balance between the bias and variance. All of these observations perfectly coincide with our main theoretical results.

\section{Conclusion and discussion}
\label{sec:con}

In this paper, we have analyzed two classes of branching random walk (BRW) solutions to the Wigner (W) equation, including WBRW-HJD based on the Hahn-Jordan decomposition (HJD): $\Theta_V = \Theta^+_V - \Theta^-_V$, and WBRW-SPA based on the stationary phase approximation (SPA). The main idea is to split the nonlocal operator with anti-symmetric kernels into two parts and explain each of them as the generator of jump process of one branch of weighted particles. We have shown that although the first moment of WRBW-HJD recovers the solution of the Wigner equation, the $L^1$-bounds for the variances grows exponentially in time with the rate depending on the norm of $\Theta^\pm_V$, which is inconsistent with the decay rate of the pseudo-differential operator.  By contrast, the WBRW-SPA is able to capture the essential contributions from the localized parts and the variance of the resulting stochastic model can be diminished, at the cost of introducing a little bias.  These results are of great importance in applications, such as tackling a general form of nonlocal problems. In particular, it ameliorates the {numerical sign problem} in high dimensional {situation}, which may involves multiple pairs of potentials and high dimensional oscillatory integrals. Our ongoing work is to apply the WBRW-SPA to study the quantum dynamics under the Coulomb interaction, such as the Hydrogen atom.
% in 6-D phase space.
% and the Helium system in 12-D phase space.

%Based on the stationary phase approximation, we propose a way to ameliorate such problem to some extent, which helps to suppress the random noises under the same computational complexity, and consequently paves the way for simulating the high dimensional quantum systems through the Wigner approach.

\section*{Acknowledgement}
This research was supported by the National Natural Science Foundation of China (Nos.~11822102, 11421101) and High-performance Computing Platform of Peking University. 
SS is partially supported by Beijing Academy of Artificial Intelligence (BAAI). 
YX is partially supported by The Elite Program of Computational and Applied Mathematics for PhD Candidates in Peking University.

%\bibliography{journalname,wigner}

%\section{Supplementary materials}
%
%\begin{figure}[!h]
%\centering
%  \subfigure[Variance of wp-SPA and sp-SPA (old).]{\includegraphics[width=0.48\textwidth,height=0.32\textwidth]{pic_old/variance_wp_sp_spa.pdf} \label{fig_a}}
%    \subfigure[Variance of wp-SPA and sp-SPA (new).]{\includegraphics[width=0.48\textwidth,height=0.32\textwidth]{variance_wp_sp_spa.pdf} \label{fig_b}}
%    \\
%   \subfigure[Variance of sp-SPA under different $\lambda_0$ (old).]{\includegraphics[width=0.48\textwidth,height=0.32\textwidth]{pic_old/variance_sp_spa_lambda.pdf} \label{fig_c}} 
%     \subfigure[Variance of sp-SPA under different $\lambda_0$ (new).]{\includegraphics[width=0.48\textwidth,height=0.32\textwidth]{variance_sp_spa_lambda.pdf} \label{fig_d}} 
%  \caption{\small  Comparision (left:old, right:new)}
%  \label{fig:variance}
%\end{figure}

\end{document}